\documentclass[12pt]{amsart}

\usepackage[utf8]{inputenc}
\usepackage{amsmath, amssymb, amsthm}
\usepackage{mathrsfs}          % For script fonts
\usepackage{mathtools}
\usepackage{tikz-cd}   % For commutative diagrams
\usepackage{hyperref}          % For hyperlinks
\hypersetup{
    colorlinks=true,
    linkcolor=blue,
    citecolor=blue,
    urlcolor=blue
}
\usepackage{enumitem}          % For flexible lists
\usepackage{xcolor}            % For remarks
\usepackage{geometry}          % Page layout
\usepackage[normalem]{ulem}    % For cross out line
\usepackage[numbers]{natbib}

\newtheorem{theorem}{Theorem}[section]
\newtheorem{lemma}[theorem]{Lemma}
\newtheorem{proposition}[theorem]{Proposition}
\newtheorem{corollary}[theorem]{Corollary}

\theoremstyle{definition}
\newtheorem{definition}[theorem]{Definition}

\newtheorem{remark}[theorem]{Remark}

\numberwithin{equation}{section}

\title{Enveloping algebras via motivic Hall algebras}

\author{Xinyi Feng*, Fan Xu}
\thanks{* Corresponding author.}

\address{Department of Mathematical Sciences, Tsinghua University, Beijing 100084, P. R. China}
\email{feng-xy21@mails.tsinghua.edu.cn (X. Feng)}

\address{Department of Mathematical Sciences, Tsinghua University, Beijing 100084, P. R. China}
\email{fanxu@mail.tsinghua.edu.cn (F. Xu)}

\subjclass[2000]{16G20, 17B37}

\keywords{motivic Hall algebra, enveloping algebra, two-periodic complex}

\begin{document}

\begin{abstract}
   We give a geometric realization of the whole universal enveloping algebras of Borcherds-Bozec algebras using quivers with loops via the motivic semi-derived Hall algebra approach.       
\end{abstract}

\maketitle

\setcounter{tocdepth}{1}
\tableofcontents

\section{Introduction}

\subsection{Background}\;

Given a finite quiver $ Q=(\mathbb{I},\mathbb{H},s,t) $, where $ \mathbb{I} $ is the set of vertices, $ \mathbb{H} $ is the set of arrows and $ s,t:\mathbb{H} \to \mathbb{I} $ are the source and target maps, respectively. 

\subsubsection{Realization of enveloping algebras via acyclic quivers}\;

If $ Q $ is acyclic, then there is an associated generalized Cartan matrix which yields a Kac-Moody Lie algebra with a triangular decomposition $ \mathcal{\mathfrak{g}}_Q = \mathfrak{n}_Q^+ \oplus \mathfrak{h}_Q \oplus \mathfrak{n}_Q^- $.  
A lot of work has been done to find interpretations for $ \mathfrak{g}_Q $, in particular, for the universal enveloping algebra $ U(\mathfrak{g}_Q)  $ of $ \mathfrak{g}_Q $, in terms of representation theory of $ Q $. 

As for realization of the positive part $ U(\mathfrak{n}_Q^+) $, Ringel introduced the so-called Ringel-Hall algebra approach in \cite{R3,R5}. 
He proved the existence of Hall polynomials for any representation-directed algebra $ \Lambda $ over a finite field $ \mathbb{F}_q $ and constructed an associative algebra  $ \mathcal{H}_q(\Lambda) $ over $ \mathbb{Z}[q] $. 
After evaluating at $ q=1 $, the free $ \mathbb{Z} $-submodule $ \mathcal{L}(\Lambda) $ generated by the isomorphism classes of indecomposable $ \Lambda $-modules becomes a Lie subalgebra whose universal enveloping algebra is isomorphic to $ \mathcal{H}_1(\Lambda) \otimes_{\mathbb{Z}} \mathbb{C} $ after a complex extension.      
Also, he showed that if $ \Lambda $ is representation-finite, $ \mathcal{L}(\Lambda) $ is exactly the Chevalley $\mathbb{Z}$-form of $ \mathfrak{n}^+_Q $ and then $ \mathcal{H}_1(\Lambda) $ realizes the Kostant $ \mathbb{Z} $-form of $ U(\mathfrak{n}^+_Q) $, see \cite{R4}. 

Later, using Euler characteristics of filtration varieties, Riedtmann \cite{Riedtmann94} obtained a similar result for any representation-finite algebra $ \Lambda $ over $ \mathbb{C} $, realizing the universal enveloping algebra of the Lie algebra generated by indecomposable modules. 
In general, for any finite-dimensional algebra $  \Lambda $ over $ \mathbb{C} $, Ding, Xiao and Xu generalized Riedtmann's result in \cite{DXX10} using characteristic functions of constructible sets of stratified Krull-Schmidt, see \cite[Section 3]{DXX10} for details. 

On the other hand, Lusztig introduced a geometric approach in \cite{Lusztig90canonical,Lusztig91} using perverse sheaves on the varieties of quiver representations over $ \overline{\mathbb{F}_q} $. 
Later, he constructed the semicanonical basis for $ U(\mathfrak{n}_Q^+) $ in \cite{Lusztig00semicanonical} in terms of constructible functions on the set of irreducible components of the Lagrangian varieties, which differs from the specialization of the canonical basis at $ q=1 $. 
In addition, Geiss, Leclerc and Schröer studied the properties of the dual semicanonical basis in \cite{GLS05,GLS07}, where the relevant Lagrangian varieties are interpreted as varieties of modules over preprojective algebras. 

In terms of realizing the whole enveloping algebra $ U(\mathfrak{g}_Q) $, motivated by Lusztig's work, Nakajima defined a new family of varieties called quiver varieties in \cite{Nakajima94}. 
In the subsequent work \cite{Nakajima98}, he considered the Cartesian product of quiver varieties with certain Lagrangian subvarieties inside, defined the convolution product on homology groups and then realized $ U(\mathfrak{g}_Q) $ using the corresponding geometric construction of integrable highest weight representations given in \cite{Nakajima94}. 
In particular, if $ Q $ is of finite type, Lusztig showed in \cite{Lusztig03} that any arbitrary large finite-dimensional quotient of $ U(\mathfrak{g}_Q) $ can be realized in terms of constructible functions on the triple variety. 

Also, there have been many attempts to recover the whole quantum group, or the whole enveloping algebra, via Hall algebra approaches. 
Instead of working with abelian categories, one may turn to derived categories, or more generally, triangulated categories. 
Toën introduced the derived Hall algebras for certain dg-categories in \cite{Toen06}, and the notion was extended by Xiao and Xu in \cite{XX08} for triangulated categories satisfying certain homological finiteness conditions. 
Another important construction is given by Bridgeland in \cite{Bridgeland13}, where he introduced the so-called Bridgeland's Hall algebra by localizing the Hall algebra of two-periodic projective complexes at the classes of acyclic complexes, and used its reduced quotient to realize the whole quantum group. 
Inspired by Bridgeland's construction, semi-derived Hall algebras were introduced by Gorsky in \cite{Gorsky13,Gorsky18} and further developed by Lu and Peng in \cite{LP21}. 
More recently, Hall algebra constructions for root categories were studied by Chen, Lu and Ruan in \cite{CLR25} and Zhang in \cite{Zh25}. 

\subsubsection{Quivers with loops}\;

If $ Q $ has loops, let $ g_i $ be the number of loops at vertex $ i \in \mathbb{I} $ and $ c_{ij} $ be the number of arrows in $ \mathbb{H} $ from $ i $ to $ j $ if $ i \neq j $, then one can associate $
 Q $ with a symmetric Borcherds-Cartan matrix $ A_Q= (a_{ij})_{i,j \in \mathbb{I}} $, where
\begin{align*}
    a_{ij} = \begin{cases}
        2 - 2 g_i &\text{ if } i = j,\\
        -c_{ij}-c_{ji} &\text{ if } i \neq j,
    \end{cases}
\end{align*}
which yields a generalized Kac-Moody algebra in the sense of \cite{Borcherds88} (also called Borcherds algebra), or more generally, yields a Borcherds-Bozec algebra, with infinitely many generators for each imaginary index $ i \in \mathbb{I}^{\mathrm{im}} $. 

As a generalization of the quantum group, the quantum generalized Kac-Moody algebra and its representation theory were introduced in \cite{Kang95}. 
Later, Kang and Schiffmann constructed the canonical basis for its negative half in \cite{KangSchiffmann06} using certain semisimple perverse sheaves, recovering the negative half of the corresponding enveloping algebra after specialization at $ q=1 $.

The quantum Borcherds-Bozec algebra arises from Bozec's works of perverse sheaves on the representation varieties of quivers with loops in \cite{Bozec15,Bozec16}, where he developed the corresponding canonical and crystal basis theories. 
As a further generalization of the quantum generalized Kac-Moody algebra, its positive half can be realized via a Ringel-Hall approach as the generic composition algebra of $ Q $, see \cite{Kang18}.
Later, Lu recovered the whole quantum Borcherds-Bozec algebra in \cite{L23} using the semi-derived Ringel-Hall algebra of nilpotent representations of $ Q $ over $ \mathbb{F}_q $. 
Based on this work, Lin, Lu and Ruan deduced the braid group actions of the quantum Borcherds-Bozec algebra introduced by Fan and Tong in \cite{FT23}, see \cite{LLR23}.

\subsection{Main result}\;

Let $ Q=(\mathbb{I},\mathbb{H},s,t) $ be a finite quiver, possibly having loops, and $ \mathcal{G}_Q $ be the associated Borcherds-Bozec algebra. 
Let $ \mathcal{A}= \mathrm{rep}^{\mathrm{nil}}_{\mathbb{C}}(Q) $ be the category of finite-dimensional nilpotent representations of $ Q $ over $ \mathbb{C} $. 

The purpose of this paper is to provide a geometric realization of the universal enveloping algebra $ U(\mathcal{G}_Q) $ of $ \mathcal{G}_Q $. 

First, we construct a motivic form of the semi-derived Ringel-Hall algebra over $ \mathbb{C}(t) $ following \cite{LP21} in the framework of \cite{Joyce07}, denoted by $ \mathcal{SDH}_t^{\mathrm{red}}(\mathcal{A}) $. 
We note that this construction and the corresponding realization of quantum Borcherds-Bozec algebras can be viewed as a motivic analogue of Lu's construction in \cite{L23}. 
This motivic construction records the relevant geometric information of filtrations. 

Then, we define the classical limit of $ \mathcal{SDH}_t^{\mathrm{red}}(\mathcal{A}) $ at $ t=-1 $, see Definition \ref{Classical limit of SDH}. 
This process is strongly motivated by the recent work of Fang, Lan and Xiao in \cite{FLX24} and by the canonical process of taking the classical limit of quantum Borcherds-Bozec algebras in \cite{FKKB21}. 

Finally, inspired by the injective homomorphism at the generalized quantum group level, see Theorem \ref{Realization of quantum BB algebra}, we define a $ \mathbb{C} $-algebra $ \mathcal{SU}_{-1}(\mathcal{A}) $, see Definition \ref{Definition of SU-1}, which is proved to be isomorphic to $ U(\mathcal{G}_Q) $ in Theorem \ref{SPhi to SU-1}. 
Moreover, we construct a $ \mathbb{C} $-algebra $ \mathcal{SDH}_{-1}^{\mathrm{ind}}(\mathcal{A}) $ which is generated by certain elements corresponding to indecomposable objects and certain Cartan elements, see Definition \ref{Definition of SDHind-1} for details. 

Our main result is the following, see also Corollary \ref{SPhi to SDHind-1}. 

\begin{theorem}
    \label{First main result}
    There is an injective homomorphism of $ \mathbb{C} $-algebras
    \[ \mathcal{S}\Phi: U(\mathcal{G}_Q) \hookrightarrow \mathcal{SDH}^{\mathrm{ind}}_{-1}(\mathcal{A}). \] 
\end{theorem}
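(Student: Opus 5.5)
The plan is to obtain $\mathcal{S}\Phi$ as the composite of two maps already set up in the paper: the isomorphism $U(\mathcal{G}_Q)\xrightarrow{\sim}\mathcal{SU}_{-1}(\mathcal{A})$ of Theorem \ref{SPhi to SU-1}, followed by a natural map $\mathcal{SU}_{-1}(\mathcal{A})\to\mathcal{SDH}^{\mathrm{ind}}_{-1}(\mathcal{A})$. Both algebras are built inside the classical limit of $\mathcal{SDH}_t^{\mathrm{red}}(\mathcal{A})$ at $t=-1$ (Definition \ref{Classical limit of SDH}). The second map should therefore be the inclusion of subalgebras of a common ambient algebra. This reduces injectivity to injectivity of the first map, which is an isomorphism.

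First I will recall the definition of $\mathcal{SU}_{-1}(\mathcal{A})$ (Definition \ref{Definition of SU-1}). It is generated by the classical limits of three kinds of elements: the images of the Chevalley-type generators $e_{il}, f_{il}$ ($i\in\mathbb{I}$, $l\geq1$, with $l=1$ only for real $i$) under the quantum embedding of Theorem \ref{Realization of quantum BB algebra}, and the Cartan elements coming from the acyclic complexes $K_\alpha$. I will then check that each of these generators lies in $\mathcal{SDH}^{\mathrm{ind}}_{-1}(\mathcal{A})$ (Definition \ref{Definition of SDHind-1}).

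For the Cartan elements this holds by definition, since they are among the generators of $\mathcal{SDH}^{\mathrm{ind}}_{-1}$. For $e_{il}$ the quantum image is a linear combination of classes of nilpotent representations of dimension vector $l\,i$ (the primitive elements in Bozec's sense), viewed as stalk complexes. After specialization at $t=-1$, the motivic class of the stack of all representations of a given dimension is a combination of stratified pieces. Using the Krull--Schmidt stratification, as in Riedtmann and Ding--Xiao--Xu, I will rewrite each such class as a polynomial in classes of indecomposables. One argues by induction on dimension: the class of a stratum with a fixed decomposition type equals the product of the indecomposable classes, corrected by terms of lower complexity. The elements $f_{il}$, which come from shifted complexes, are handled in the same way.

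The main obstacle is this step. The classical-limit product at $t=-1$ must behave like a convolution for which the Euler-characteristic version of the PBW/Krull--Schmidt argument works. It must also be compatible with the twists by $K_\alpha$ and with the sign conventions of the semi-derived multiplication. If this only works up to Cartan factors, those factors are still in $\mathcal{SDH}^{\mathrm{ind}}_{-1}$, so the inclusion survives. Once the generators are shown to lie in the subalgebra, $\mathcal{SU}_{-1}(\mathcal{A})\subseteq\mathcal{SDH}^{\mathrm{ind}}_{-1}(\mathcal{A})$ as subalgebras of the same ambient algebra. The composite $\mathcal{S}\Phi$ is then an injective $\mathbb{C}$-algebra homomorphism.
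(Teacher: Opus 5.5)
\textbf{Comparison with the paper.} Your reduction has the same shape as the paper's. The paper composes the isomorphism $U(\mathcal{G}_Q)\cong\mathcal{SU}_{-1}(\mathcal{A})$ of Theorem~\ref{SPhi to SU-1} with $\mathcal{SU}_{-1}(\mathcal{A})\subseteq\mathcal{SDH}^{\mathrm{red}}_{-1}(\mathcal{A})\cong\mathcal{SDH}^{\mathrm{ind}}_{-1}(\mathcal{A})$, where the last step is Proposition~\ref{ind=red(SDH)}. Checking only the generators of $\mathcal{SU}_{-1}(\mathcal{A})$ does not let you avoid that proposition.

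The Cartan elements and the real generators $\mathfrak{s}_{i1}=\delta_{[C_{S_i}]}$ lie in the ind-algebra trivially. For $i\in\mathbb{I}^{\mathrm{im}}$, however, $\mathfrak{s}_{il}$ is a polynomial in the $\delta_{[C_{S_i^{\oplus k}}]}$ with $k\leqslant l$. Its expansion is supported on nilpotent representations of dimension vector $l\,i$ of every decomposition type. For instance, for $g_i\geqslant 2$ and $l=2$ there is a $\mathbb{P}^{g_i-1}$-family of indecomposables next to $S_i^{\oplus 2}$. So you need the Krull--Schmidt reduction for general positive constructible sets of Krull--Schmidt type, which is exactly the content of Proposition~\ref{ind=red(SDH)}.

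\textbf{The gap.} The step you call the main obstacle is the whole mathematical content, and your sketch of it would not go through as written.
\begin{enumerate}
\item Induction on dimension cannot close. In $1_{[\mathcal{O}'/G_{\underline{\beta}'}]}*1_{[\mathcal{O}_1/G_{\underline{\beta}_1}]}=m_1\,1_{[\mathcal{O}/G_{\underline{\beta}}]}+(\text{corrections})$, every correction term has the same dimension vector pair $\underline{\beta}$ as the term you are solving for. The induction hypothesis therefore never reaches them. The paper instead filters by the number $\gamma$ of indecomposable summands and inducts on $\gamma$.
\item You must show the corrections really have smaller $\gamma$ after specializing at $t=-1$. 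A priori the product also contains middle terms $Z_\xi$ with $\gamma([Z_\xi])\geqslant\gamma([X])+\gamma([Y])$ and $Z_\xi\not\cong X\oplus Y$. These vanish only because the powers of $(t^2-1)$ in $\Upsilon(\mathrm{Aut}_{\mathcal{C}_2(\mathcal{A}})(-))$ are controlled by the number of summands (Lemma~\ref{Upsilon(Aut)}), and because $(t^2-1)$ divides $\Upsilon(\mathrm{Ext}^1_{\mathcal{C}_2(\mathcal{A})}(X,Y)_Z)$ via a free $\mathbb{C}^*$-action (Lemma~\ref{Ext^1(X,Y)Z}). This is Lemma~\ref{Inequality for gamma in C2A+}.
\item The leading term is not ``the product of the indecomposable classes''. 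It is $\prod_i m_i!$ times $1_{[m_1\mathcal{O}_1\oplus\cdots\oplus m_n\mathcal{O}_n/G_{\underline{\beta}}]}$. You have to compute this coefficient at $t=-1$ to know it is a nonzero integer (Lemma~\ref{O^k in C2A+}, Corollary~\ref{Of Krull-Schmidt in C2A+}).
\item To feed the corrections back into the induction, you need them to be supported on constructible sets of Krull--Schmidt type. The paper obtains this from the $\mathbb{C}^*$-fixed-point argument of Riedtmann and Ding--Xiao--Xu (Lemma~\ref{Of stratified Krull-Schmidt in C2A+}).
\end{enumerate}
None of these steps appears in your proposal; ``corrected by terms of lower complexity'' is their conclusion, not an argument for it.

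Finally, like the paper, you tacitly treat $\mathcal{SU}_{-1}(\mathcal{A})$ and $\mathcal{SDH}^{\mathrm{ind}}_{-1}(\mathcal{A})$ as subalgebras of $\mathcal{SDH}^{\mathrm{red}}_{-1}(\mathcal{A})$. Since $\mathbb{C}\otimes_{\mathbb{C}_{-1}}-$ is only right exact, injectivity of these comparison maps needs justification. It is therefore not automatic that injectivity reduces to that of the first map.
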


\subsection{Structure of the paper}\;

In Section \ref{Motivic form of semi-derived Ringel-Hall algebras}, we construct the motivic semi-derived Ringel-Hall algebra $ \mathcal{SDH}_t^{\mathrm{red}}(\mathcal{A}) $ over $ \mathbb{C}(t) $.

Section \ref{Realization of enveloping algebras of Borcherds-Bozec algebras} is divided into the following three parts. 

In Section \ref{Borcherds-Bozec algebras and Quantum Borcherds-Bozec algebras}, we give a geometric realization of the quantum Borcherds-Bozec algebra using $ \mathcal{SDH}_t^{\mathrm{red}}(\mathcal{A}) $. 
In Section \ref{Classical limit of SDHt}, we give the definition of the classical limit $ \mathcal{SDH}_{-1}^{\mathrm{red}}(\mathcal{A}) $ of  $ \mathcal{SDH}_t^{\mathrm{red}}(\mathcal{A}) $ at $ t=-1 $. 
In Section \ref{Enveloping algebra arising from C2A}, we define two $ \mathbb{C} $-algebras inside $ \mathcal{SDH}_{-1}^{\mathrm{red}}(\mathcal{A}) $ denoted by $ \mathcal{SU}_{-1}(\mathcal{A}) $ and $ \mathcal{SDH}_{-1}^{\mathrm{ind}}(\mathcal{A}) $, respectively.  
We prove that $ U(\mathcal{G}_Q) \cong \mathcal{SU}_{-1}(\mathcal{A})$ and $ \mathcal{SDH}_{-1}^{\mathrm{red}}(\mathcal{A}) \cong \mathcal{SDH}_{-1}^{\mathrm{ind}}(\mathcal{A}) $, respectively. 
Combining these two isomorphisms yields our main result.

\subsection{Notations}\;

For convenience, we list the following notations used throughout the paper. 

\begin{center}
\begin{tabular}{@{}p{0.31\textwidth}p{0.65\textwidth}@{}}
\hline
Notation & Meaning \\
\hline
$\mathcal{A}$ & the category $\mathrm{rep}^{\mathrm{nil}}_{\mathbb{C}}(Q)$ \\
$\mathcal{C}_2(\mathcal{A})$ & the category of $\mathbb{Z}/2$-graded complexes of $\mathcal{A}$ \\
$\mathcal{M},\mathcal{M}^{(2)}$ & the moduli stacks of objects and filtrations in $\mathcal{C}_2(\mathcal{A})$ \\
$ \mathcal{H}_t^{\mathrm{tw}}(\mathcal{C}_2(\mathcal{A}))  $ & the twisted motivic Hall algebra of $\mathcal{C}_2(\mathcal{A})$\\
$ \mathcal{H}_t^{\mathrm{tw}}(\mathcal{C}_2(\mathcal{A}))/(I_2(\mathcal{A})+J_2(\mathcal{A})) $ & the quotient algebra \\
$\mathcal{SDH}_t(\mathcal{A})$ & the localization of the above quotient algebra at classes of acyclic complexes \\
$\mathcal{SDH}_t^{\mathrm{red}}(\mathcal{A})$ & the (reduced) motivic semi-derived Hall algebra \\
$ \mathcal{R},\mathcal{R}^{(2)} $ & the moduli stacks of objects and filtrations in $ \mathcal{A} $\\
$ \mathcal{H}_t^{\mathrm{tw}}(\mathcal{A})  $ & the twisted motivic Hall algebra of $\mathcal{A}$\\
\hline
\end{tabular}
\end{center}

\subsection{Acknowledgment}\;

Fan Xu was supported by the National Natural Science Foundation of China [Grant No. 12371036].

\newpage

\section{Motivic form of semi-derived Ringel-Hall algebras}
\label{Motivic form of semi-derived Ringel-Hall algebras}

Let $ Q = (\mathbb{I},\mathbb{H},s,t)$ be a finite quiver, $ \mathcal{A}= \mathrm{rep}^{\mathrm{nil}}_{\mathbb{C}}(Q) $ be the category of finite-dimensional nilpotent representations of $ Q $ over $ \mathbb{C} $, then $ \mathcal{A} $ is hereditary and abelian. 
If $ Q $ has loops, $ \mathcal{A} $ does not have enough projective objects.

\subsection{Moduli stacks of objects and filtrations in \texorpdfstring{$ \mathcal{C}_2(\mathcal{A}) $}{C2A}}\;
\label{Moduli stacks of objects and filtrations in C2A}

For any dimension vector $ \alpha \in \mathbb{N}\mathbb{I} $, consider a $ \mathbb{C} $-affine space
\begin{align*}
    E_{\alpha}(Q) = \bigoplus_{h\in \mathbb{H}}\mathrm{Hom}_{\mathbb{C}}(\mathbb{C}^{\alpha_{s(h)}},\mathbb{C}^{\alpha_{t(h)}}).
\end{align*}
Any point $ x=(x_{h})_{h \in \mathbb{H}} $ in $ E_{\alpha}(Q) $ defines a representation $  M(x) = (\mathbb{C}^{\alpha}, x) $ of $ Q $ in the fixed vector space $ \mathbb{C}^{\alpha} = \bigoplus_{i \in \mathbb{I}}\mathbb{C}^{{\alpha}_i} $. 

Given a point $ x \in E_{\alpha}(Q) $ and a path $ \rho=h_1 \cdots h_n $ in $ Q $, set $ x_{\rho} = x_{h_1} \cdots x_{h_n} $, then $ x $ is called nilpotent if there exists $ N \geqslant 2 $ such that for any $ n > N $, $ x_{\rho} = 0 $ for any path $ \rho = h_1 \cdots h_n $ in $ Q $  of length $ n $.
Define a closed subvariety
\begin{align*}
     E_{\alpha}^{\mathrm{nil}}(Q) = \{x\in E_{\alpha}(Q)\mid x \text{ is nilpotent} \}.
\end{align*}

Let
\begin{align*}
    G_{\alpha}(Q) = \prod_{i \in \mathbb{I}} \mathrm{GL}_{\alpha_i}(\mathbb{C})
\end{align*}
be the algebraic group which acts on $ E_{\alpha}(Q) $ by $ (g_i)_{i \in \mathbb{I}} \cdot (x_{h})_{h \in \mathbb{H}} = (g_{t(h)} x_{h} g_{s(h)}^{-1})_{h \in \mathbb{H}} $. 
This induces an action of $ G_{\alpha}(Q) $ on $ E_{\alpha}^{\mathrm{nil}}(Q) $, with the quotient stack denoted by $ [E_{\alpha}^{\mathrm{nil}}(Q)/G_{\alpha}(Q)] $.

There is a canonical bijection
\begin{align*}
\left\{
  G_{\alpha}(Q)\text{-orbits in }E_{\alpha}^{\mathrm{nil}}(Q)
\right\}
\;&\longleftrightarrow\;
    \left\{
  \begin{tabular}{c}
      isomorphism classes of   \\
      nilpotent $\mathbb{C}$-representations of $ Q $ \\
      of dimension vector $ \alpha $
  \end{tabular}
\right\}\\
\mathcal{O}_x &\longmapsto [M(x)].
\end{align*}
Moreover, the stabilizer $ \mathrm{Stab}_{G_{\alpha}(Q)}(x) \cong \mathrm{Aut}_{\mathcal{A}} (M(x)) $ is special and affine.

Let $ \mathcal{C}_2(\mathcal{A}) $ be the category of $ \mathbb{Z}/2 $-graded complexes of $ \mathcal{A} $. 
Any object in $ \mathcal{C}_2(\mathcal{A}) $ is of the form
\[\begin{tikzcd}
    X=(X^1,X^0,d^1,d^0) = (X^1 \arrow[r,"d^1"] & X^0 \arrow[l, shift left,"d^0"]).
\end{tikzcd}\]
For any dimension vector pair 
$ \underline{\alpha} = (\alpha^1, \alpha^0) \in \mathbb{N}\mathbb{I} \times \mathbb{N}\mathbb{I} $, similarly as in \cite[Section 3.2]{FLX24}, we define a closed subvariety
\begin{align*}
    C_2(Q, \underline{\alpha}) =& \{ (x^1, x^0, d^1, d^0) \in E_{\alpha^1}^{\mathrm{nil}}(Q) \times E_{\alpha^0}^{\mathrm{nil}}(Q) \times \mathrm{Hom}(\mathbb{C}^{\alpha^1}, \mathbb{C}^{\alpha^0}) 
     \times \mathrm{Hom}(\mathbb{C}^{\alpha^0}, \mathbb{C}^{\alpha^1}) \mid \\
      &d^1_{t(h)} x_h^1 = x_h^0 d^1_{s(h)}, \; d^0_{t(h)} x_h^0 = x_h^1 d^0_{s(h)}, \; d_i^1 d_i^0 = 0, \; d_i^0 d_i^1 = 0, \; \forall h \in \mathbb{H}, i \in \mathbb{I} \},
\end{align*}
where $ \mathrm{Hom}(\mathbb{C}^{\alpha^j}, \mathbb{C}^{\alpha^{j+1}}) = \bigoplus_{i \in \mathbb{I}} \mathrm{Hom}_{\mathbb{C}}(\mathbb{C}^{\alpha_i^j}, \mathbb{C}^{\alpha_i^{j+1}})$ for any $  j \in \mathbb{Z}/2 $.

Let
\[
G_{\underline{\alpha}} = G_{\alpha^1}(Q) \times G_{\alpha^0}(Q)
\]
be the algebraic group which acts on $C_2(Q, \underline{\alpha})$ by
\[(g^1, g^0) \cdot (x^1, x^0, d^1, d^0) 
= (g^1 x^1 (g^1)^{-1}, g^0 x^0 (g^0)^{-1}, g^0 d^1 (g^1)^{-1}, g^1 d^0 (g^0)^{-1}), \]
with the quotient stack denoted by $ [C_2(Q,\underline{\alpha})/G_{\underline{\alpha}}] $.

There is a canonical bijection
\begin{align*}
\left\{
  G_{\underline{\alpha}}\text{-orbits in } C_2(Q,\underline{\alpha})
\right\}
\;&\longleftrightarrow\;
    \left\{
  \begin{tabular}{c}
      isomorphism classes of   \\
      $\mathbb{Z}/2$-graded complexes in $\mathcal{C}_2(\mathcal{A})$ \\
      of dimension vector pair $ \underline{\alpha}=(\alpha^1,\alpha^0) $
  \end{tabular}
\right\}\\
\mathcal{O}_{(x^1, x^0, d^1, d^0)} &\longmapsto [(M(x^1),M(x^0),d^1,d^0)].
\end{align*}
Moreover, the stabilizer $ \mathrm{Stab}_{G_{\underline{\alpha}}}((x^1, x^0, d^1, d^0)) \cong \mathrm{Aut}_{\mathcal{C}_2(\mathcal{A})} ((M(x^1),M(x^0),d^1,d^0)) $ is special and affine.

Finally, we define
\[ C_2^{\mathrm{ind}}(Q,\underline{\alpha}) =\{(x^1,x^0,d^1,d^0) \in C_2(Q,\underline{\alpha}) \mid (M(x^1),M(x^0),d^1,d^0) \text{ is indecomposable} \}. \]
We claim that $ C_2^{\mathrm{ind}}(Q,\underline{\alpha}) \subseteq C_2(Q,\underline{\alpha}) $ is constructible. 
Indeed, for any $ \underline{\alpha} = \underline{\alpha}'+\underline{\alpha}''$, there is an algebraic map
\begin{align}
    \label{psiaa'a''}
    \psi_{\underline{\alpha}',\underline{\alpha}''}^{\underline{\alpha}} : G_{\underline{\alpha}} \times  C_2(Q,\underline{\alpha}') \times   C_2(Q,\underline{\alpha}'') &\to  C_2(Q,\underline{\alpha}) \\
    (g,x',x'')&\mapsto g \cdot (x' \oplus x''), \notag
\end{align}
where $ x'= (x'^1,x'^0,d'^1,d'^0), x''= (x''^1,x''^0,d''^1,d''^0) $, and $ x' \oplus x'' $ denotes the point corresponding to the complex 
\[(M(x'^1) \oplus M(x''^1),M(x'^0) \oplus M(x''^0),\begin{pmatrix}
        \begin{smallmatrix}
            d'^1 & 0 \\
            0 & d''^1
        \end{smallmatrix}
    \end{pmatrix},\begin{pmatrix}
        \begin{smallmatrix}
            d'^0 & 0 \\
            0 & d''^0
        \end{smallmatrix}
         \end{pmatrix}),\]
then each image $ \mathrm{Im}\;\psi_{\underline{\alpha}',\underline{\alpha}''}^{\underline{\alpha}} $ is constructible and thus
\[C_2^{\mathrm{ind}}(Q,\underline{\alpha}) = C_2(Q,\underline{\alpha}) \setminus(\bigcup_{\substack{\underline{\alpha}= \underline{\alpha}'+\underline{\alpha}''\\\text{ s.t. } \underline{\alpha}',\underline{\alpha}'' \neq 0 }   
}\mathrm{Im}\;\psi_{\underline{\alpha}',\underline{\alpha}''}^{\underline{\alpha}})\]
is constructible as the complement of a finite union of constructible subsets. 
Also, it is invariant under the action of $ G_{\underline{\alpha}} $, with the quotient stack denoted by $ [C_2^{\mathrm{ind}}(Q,\underline{\alpha})/G_{\underline{\alpha}}] $.

Although $ \mathcal{A} $ may not have enough projective objects, the following definitions of moduli stacks are entirely analogous to \cite[Section 4.1, 4.2]{FLX24}.

For any $\underline{\alpha} \in \mathbb{N}\mathbb{I} \times \mathbb{N}\mathbb{I} $, let
\[\mathcal{M}_{\underline{\alpha}} = [C_2(Q,\underline{\alpha})/G_{\underline{\alpha}}] 
\]
be the quotient stack parameterizing the isomorphism classes of $\mathbb{Z}/2 $-graded complexes of dimension vector pair $ \underline{\alpha} $. 
Set
\[
\mathcal{M} = \bigsqcup_{\underline{\alpha} \in \mathbb{N}\mathbb{I} \times \mathbb{N}\mathbb{I} } \mathcal{M}_{\underline{\alpha}},
\]
then $ \mathcal{M} $ is an Artin stack, locally of finite type over $ \mathbb{C} $ with affine stabilizers.

Let $ \mathcal{M}(\mathbb{C}) $ be the set of $ \mathbb{C} $-valued points of $ \mathcal{M} $, then there is a bijection
\begin{align*}
    \left\{
  \begin{tabular}{c}
      isomorphism classes of   \\
      $\mathbb{Z}/2$-graded complexes in $\mathcal{C}_2(\mathcal{A})$
  \end{tabular}
\right\}
\;&\longleftrightarrow\;
\left\{
\begin{tabular}{c}
      2-isomorphism classes of   \\
      1-morphisms $ \mathrm{Spec}\;\mathbb{C} \to \mathcal{M} $
  \end{tabular}
\right\} = \mathcal{M}(\mathbb{C})
\\
[X] &\longmapsto \delta_{[X]} = [\mathrm{Spec}\;\mathbb{C} \xrightarrow{i_{[X]}} \mathcal{M}].
\end{align*}
Under the above bijection, we may sometimes identify the isomorphism classes and the corresponding $ \mathbb{C} $-valued points in the following discussion.

For simplicity, write
\[x=(x^1,x^0,d^1,d^0),\;x'=(x'^1,x'^0,d'^1,d'^0),\; x''=(x''^1,x''^0,d''^1,d''^0)\]
for the corresponding points in $ C_2(Q,\underline{\alpha}),C_2(Q,\underline{\alpha}'),C_2(Q,\underline{\alpha}'') $ for any $ \underline{\alpha},\underline{\alpha}',\underline{\alpha}'' \in \mathbb{N}\mathbb{I} \times \mathbb{N}\mathbb{I} $, respectively.

For any $ \underline{\alpha},\underline{\alpha}' \in \mathbb{N}\mathbb{I} \times \mathbb{N}\mathbb{I}$, define a closed subvariety
\begin{align*}
    \mathbf{Hom}(\underline{\alpha},\underline{\alpha}') =\{&(x,x',f^1,f^0)  \in C_2(Q,\underline{\alpha}) \times C_2(Q, \underline{\alpha}') \times \mathrm{Hom}(\mathbb{C}^{\alpha^1},\mathbb{C}^{{\alpha}'^{1}}) \times \mathrm{Hom}(\mathbb{C}^{\alpha^0},\mathbb{C}^{{\alpha}'^{0}})\\
    &\text{ s.t. } f_{t(h)}^j x_h^j = {x'}_h^j f_{s(h)}^j, \;d_i'^j f_i^j = f_i^{j+1} d_i^j  ,\; \forall i \in \mathbb{I}, h\in \mathbb{H}, j\in \mathbb{Z}/2\}.
\end{align*} 
For $ \underline{\alpha}=\underline{\alpha}' +\underline{\alpha}'' \in \mathbb{N}\mathbb{I} \times \mathbb{N}\mathbb{I}$, define a constructible subset
\begin{align*}   W_{\underline{\alpha}',\underline{\alpha}''}^{\underline{\alpha}} = &\{((x'',x,f''^1,f''^0), (x,x',f'^1,f'^0)) \in \mathbf{Hom}(\underline{\alpha}'',\underline{\alpha}) \times \mathbf{Hom}(\underline{\alpha},\underline{\alpha}')\\
    &\text{ s.t. } 0 \to X'' \xrightarrow{(f''^1,f''^0)} X \xrightarrow{(f'^1,f'^0)} X' \to 0 \text{ is exact in } \mathcal{C}_2(\mathcal{A})\},
\end{align*}
where $ X= (M(x^1), M(x^0), d^1, d^0)$ and $ X',X'' $ are defined similarly. 
There is a free action of $ G_{\underline{\alpha}'} \times G_{\underline{\alpha}''}$ on $ W_{\underline{\alpha}',\underline{\alpha}''}^{\underline{\alpha}}  $, with the quotient stack denoted by
\[
{\mathcal{M}^{(2)}}_{\underline{\alpha}',\underline{\alpha}''}^{\underline{\alpha}} = [W_{\underline{\alpha}',\underline{\alpha}''}^{\underline{\alpha}}/G_{\underline{\alpha}'} \times G_{\underline{\alpha}''}].
\]
Set
\[
\mathcal{M}^{(2)} =\bigsqcup _{\underline{\alpha}=\underline{\alpha}' +\underline{\alpha}''}{\mathcal{M}^{(2)}}_{\underline{\alpha}',\underline{\alpha}''}^{\underline{\alpha}},
\]
then $ \mathcal{M}^{(2)} $ parametrizes filtrations of objects in $ \mathcal{C}_2(\mathcal{A}) $ by
\[
((x'',x,f''^1,f''^0), (x,x',f'^1,f'^0)) \mapsto (\mathrm{Im}\;(f''^1,f''^0) \subseteq X).
\]

Consider the diagram
\[
\begin{tikzcd}
\mathcal{M}^{(2)} \arrow[r,"b"] \arrow[d,"{(a_1,a_2)}"] & \mathcal{M} \\
\mathcal{M} \times \mathcal{M},
\end{tikzcd}
\]
where $ (a_1,a_2) $ corresponds to the map $ (X'' \subseteq X) \mapsto (X/X'',X'') $ and $ b $ corresponds to the map $ (X'' \subseteq X) \mapsto X $.

\begin{lemma}[{\itshape\cite[Lemma 4.1]{FLX24}}]\;

    \begin{enumerate}
        \item $ b $ is representable and proper.
        \item $ (a_1, a_2) $ is of finite type.
    \end{enumerate}
\end{lemma}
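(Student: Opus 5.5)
Both statements are local on the target and can be checked one component ${\mathcal{M}^{(2)}}_{\underline{\alpha}',\underline{\alpha}''}^{\underline{\alpha}}$ at a time. For each fixed $\underline{\alpha}$ only finitely many decompositions $\underline{\alpha}=\underline{\alpha}'+\underline{\alpha}''$ occur, and we work with the explicit quotient presentations. The plan follows \cite[Lemma 4.1]{FLX24}. The only new feature is that $\mathcal{A}$ lacks enough projectives, and this never enters the argument: everything takes place on the varieties $C_2(Q,\underline{\alpha})$ and $W_{\underline{\alpha}',\underline{\alpha}''}^{\underline{\alpha}}$, which are defined by closed (nilpotency, commutativity, complex) conditions together with exactness.

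For (1), we identify the fibre of $b$ over a point $x\in C_2(Q,\underline{\alpha})$. Dividing by the free action of $G_{\underline{\alpha}'}\times G_{\underline{\alpha}''}$ removes the choice of bases of $X''$ and $X'$. Hence $b^{-1}(x)$ is the scheme of $\mathbb{Z}/2$-graded subcomplexes $X''\subseteq X$ of dimension vector pair $\underline{\alpha}''$. These are pairs of $\mathbb{I}$-graded subspaces $U^j\subseteq \mathbb{C}^{\alpha^j}$ that are stable under all $x^j_h$ and satisfy $d^j(U^j)\subseteq U^{j+1}$. Nilpotency of the sub and quotient is automatic.

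Globally, we get a closed subscheme $F_{\underline{\alpha}',\underline{\alpha}''}^{\underline{\alpha}}\subseteq C_2(Q,\underline{\alpha})\times\prod_{j,i}\mathrm{Gr}(\alpha''^j_i,\alpha^j_i)$, cut out by these invariance conditions, which are closed incidence conditions. It is $G_{\underline{\alpha}}$-equivariant, and the key step is to show
\[
{\mathcal{M}^{(2)}}_{\underline{\alpha}',\underline{\alpha}''}^{\underline{\alpha}}\cong [F_{\underline{\alpha}',\underline{\alpha}''}^{\underline{\alpha}}/G_{\underline{\alpha}}],
\]
with $b$ induced by the projection to $C_2(Q,\underline{\alpha})$. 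To prove this, we map $W$ to $F$ by sending $(f'',f')$ to $\mathrm{Im} f''$. The group $G_{\underline{\alpha}'}\times G_{\underline{\alpha}''}$ acts freely, and the quotient $W/(G_{\underline{\alpha}'}\times G_{\underline{\alpha}''})$ is a $G_{\underline{\alpha}}$-torsor over $F$ after accounting for the $G_{\underline{\alpha}}$-action on $x$; equivalently, one compares functors of points via the frame bundles of the tautological sub and quotient bundles. Granting this, $b$ is representable, since its base change along $C_2(Q,\underline{\alpha})\to\mathcal{M}_{\underline{\alpha}}$ is the scheme $F$. It is also proper, because $F$ is closed in $C_2(Q,\underline{\alpha})\times(\text{product of Grassmannians})$ and Grassmannians are projective.

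For (2), we compute in the same presentation. The map $(a_1,a_2)$ sends $(x,U)$ to the induced complexes on $U$ and on $\mathbb{C}^{\alpha}/U$, landing in $\mathcal{M}_{\underline{\alpha}'}\times\mathcal{M}_{\underline{\alpha}''}$. Since $F$ is of finite type and the target is locally of finite type, $(a_1,a_2)$ is quasi-compact and locally of finite type on each component. We therefore only need it to be quasi-compact globally. For this, note that the preimage of the finite type substack $\mathcal{M}_{\underline{\alpha}'}\times\mathcal{M}_{\underline{\alpha}''}$ is the single finite type component with $\underline{\alpha}=\underline{\alpha}'+\underline{\alpha}''$ fixed.

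The main obstacle is the identification of the stack quotient of $W$ with $[F/G_{\underline{\alpha}}]$, in particular checking that it holds scheme-theoretically for families and not just on $\mathbb{C}$-points. I would handle it as in \cite{FLX24} (following Joyce's exact-sequence stacks \cite{Joyce07}), by working over an arbitrary base $S$ with locally free sheaves and trivializing locally.
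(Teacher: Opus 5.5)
The paper gives no argument of its own for this lemma; it imports \cite[Lemma 4.1]{FLX24}, i.e. the Bridgeland-style argument in which the fibres of $b$ are Grassmannian/Quot-type schemes. Your route makes that argument explicit for $\mathcal{C}_2(\mathcal{A})$. Everything downstream of the identification ${\mathcal{M}^{(2)}}^{\underline{\alpha}}_{\underline{\alpha}',\underline{\alpha}''}\cong[F/G_{\underline{\alpha}}]$ (with $F=F^{\underline{\alpha}}_{\underline{\alpha}',\underline{\alpha}''}$) is fine. The base change of $b$ along the smooth atlas $C_2(Q,\underline{\alpha})\to\mathcal{M}_{\underline{\alpha}}$ is the projective morphism $F\to C_2(Q,\underline{\alpha})$, and representability and properness descend. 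In (2) you should add that $\mathcal{M}\times\mathcal{M}$ is quasi-separated (a disjoint union of quotients of affine varieties by affine groups); that is what makes a morphism out of a finite-type component quasi-compact.

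The genuine problem is the identification itself, which you call the key step. You justify it by saying that $W/(G_{\underline{\alpha}'}\times G_{\underline{\alpha}''})$ is a $G_{\underline{\alpha}}$-torsor over $F$, and that is false. In a point of $W$, the data $x''$ and $x'$ are determined by $(x,f'',f')$. The map $W\to F$, $(x'',x,f'',x',f')\mapsto(x,\mathrm{Im}\,f'')$, has as fibre over $(x,U)$ the set of graded bases of $U$ and of $(\mathbb{C}^{\alpha^1}\oplus\mathbb{C}^{\alpha^0})/U$. So $W\to F$ is a $G_{\underline{\alpha}'}\times G_{\underline{\alpha}''}$-torsor, Zariski-locally trivial via frames of the tautological sub- and quotient bundles, and $W/(G_{\underline{\alpha}'}\times G_{\underline{\alpha}''})\cong F$ itself. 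Consequently, if ${\mathcal{M}^{(2)}}^{\underline{\alpha}}_{\underline{\alpha}',\underline{\alpha}''}=[W/G_{\underline{\alpha}'}\times G_{\underline{\alpha}''}]$ is read literally as displayed, it is the scheme $F$. Then $b$ is the composite $F\to C_2(Q,\underline{\alpha})\to\mathcal{M}_{\underline{\alpha}}$, whose second arrow is a $G_{\underline{\alpha}}$-torsor: representable but not proper. No family-level argument can then produce $[F/G_{\underline{\alpha}}]$, and (1) would fail.

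The missing point is that the stack of filtrations must also be divided by $G_{\underline{\alpha}}$, acting by $x\mapsto g\cdot x$, $f''\mapsto g f''$, $f'\mapsto f' g^{-1}$. That is, ${\mathcal{M}^{(2)}}^{\underline{\alpha}}_{\underline{\alpha}',\underline{\alpha}''}=[W/G_{\underline{\alpha}'}\times G_{\underline{\alpha}}\times G_{\underline{\alpha}''}]$, which is what parametrizes filtrations up to isomorphism, as the paper intends. You need to state this explicitly. Once you do, the identification is immediate: $W\to F$ is a $G_{\underline{\alpha}}$-equivariant $G_{\underline{\alpha}'}\times G_{\underline{\alpha}''}$-torsor, so $[W/G_{\underline{\alpha}'}\times G_{\underline{\alpha}}\times G_{\underline{\alpha}''}]\cong[F/G_{\underline{\alpha}}]$. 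The scheme-theoretic check you defer to \cite{FLX24} and \cite{Joyce07} then reduces to this torsor statement over an arbitrary base.
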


\begin{corollary}[{\itshape\cite[Corollary 4.2]{FLX24}}]\;

    $ \mathcal{M}^{(2)} $ is an algebraic stack.
\end{corollary}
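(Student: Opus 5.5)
The plan is to deduce this from the preceding Lemma (from \cite[Lemma 4.1]{FLX24}) together with the explicit presentation of $\mathcal{M}^{(2)}$ as a disjoint union of quotient stacks. For each decomposition $\underline{\alpha}=\underline{\alpha}'+\underline{\alpha}''$, the piece ${\mathcal{M}^{(2)}}_{\underline{\alpha}',\underline{\alpha}''}^{\underline{\alpha}}$ is the quotient $[W_{\underline{\alpha}',\underline{\alpha}''}^{\underline{\alpha}}/G_{\underline{\alpha}'}\times G_{\underline{\alpha}''}]$. The first step is to replace $W_{\underline{\alpha}',\underline{\alpha}''}^{\underline{\alpha}}$, which is a priori only constructible, by a scheme.

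Exactness of $0\to X''\to X\to X'\to 0$ in $\mathcal{C}_2(\mathcal{A})$ is checked degreewise and vertexwise on the underlying vector spaces. The composite vanishing, $f'^jf''^j=0$, is a closed condition. Injectivity of $f''$ and surjectivity of $f'$ are open conditions, given by maximal rank of the matrices $f''^j_i$ and $f'^j_i$. Once these hold, exactness in the middle follows from the dimension count $\underline{\alpha}=\underline{\alpha}'+\underline{\alpha}''$. Hence $W$ is a locally closed subvariety of $\mathbf{Hom}(\underline{\alpha}'',\underline{\alpha})\times\mathbf{Hom}(\underline{\alpha},\underline{\alpha}')$, in particular a scheme of finite type over $\mathbb{C}$. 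The affine algebraic group $G_{\underline{\alpha}'}\times G_{\underline{\alpha}''}$ acts on it algebraically, so the quotient stack is an Artin stack of finite type with affine stabilizers. In fact the stabilizers are trivial, since the action is free.

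Next, a disjoint union of algebraic stacks indexed by the countable set of triples $(\underline{\alpha},\underline{\alpha}',\underline{\alpha}'')$ is again an algebraic stack, locally of finite type. This already gives the statement. As a consistency check, I would also argue through the Lemma: $b:\mathcal{M}^{(2)}\to\mathcal{M}$ is representable, and $\mathcal{M}$ is an Artin stack. The fibre product of $\mathcal{M}^{(2)}$ with an atlas $\bigsqcup C_2(Q,\underline{\alpha})\to\mathcal{M}$ is therefore an algebraic space. Composing with the smooth surjection from that atlas gives a smooth atlas of $\mathcal{M}^{(2)}$, so $\mathcal{M}^{(2)}$ is algebraic. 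Properness of $b$ also yields that the diagonal is quasi-compact and separated, inherited from $\mathcal{M}$.

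The main obstacle is making sure the quotient-stack description matches the moduli functor that $b$ and $(a_1,a_2)$ are defined on. One must check that families of short exact sequences over a base $S$ correspond to $S$-points of $[W/G]$, with the lack of enough projectives in $\mathcal{A}$ playing no role. This is where I would follow \cite[Section 4.2]{FLX24} verbatim. Their argument uses only the linear-algebraic description of nilpotent representations by matrices, not projective resolutions, so it transfers directly.
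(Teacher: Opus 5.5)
Your argument is correct, but your main route is not the one the paper relies on. The paper gives no separate proof. The statement is recorded as a corollary of the preceding Lemma quoted from \cite[Lemma 4.1]{FLX24}, in Bridgeland's template, so the intended argument is essentially your ``consistency check'': $\mathcal{M}$ is algebraic and $b$ is representable, hence $\mathcal{M}^{(2)}$ is algebraic. Your sketch leaves representability of the diagonal implicit; it follows by factoring $\Delta_{\mathcal{M}^{(2)}}$ as the diagonal of $b$ followed by a base change of $\Delta_{\mathcal{M}}$.

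Your primary argument instead works directly from the explicit quotient presentation of $W^{\underline{\alpha}}_{\underline{\alpha}',\underline{\alpha}''}$. Its one substantive step is upgrading the paper's ``constructible'' to ``locally closed'', and you do this correctly: $f'f''=0$ is closed, the maximal-rank conditions are open, and middle exactness follows from the dimension count. Without this upgrade the quotient stack is not even meaningful as written.
\begin{itemize}
\item The direct route is self-contained and independent of the Lemma. It gives finite type of each component and affine stabilizers at once.
\item The paper's route needs no presentation, so it also covers the intrinsic definition of $\mathcal{M}^{(2)}$ as the stack of short exact sequences. It also removes the ``main obstacle'' you flag: representability of $b$, whose fibres are Grassmannians of subcomplexes, is exactly what matches the moduli functor with a geometric object. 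The properties of $b$ and $(a_1,a_2)$ in the Lemma are needed for the convolution product anyway.
\end{itemize}

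One side remark in your proposal clashes with the rest. Suppose ${\mathcal{M}^{(2)}}^{\underline{\alpha}}_{\underline{\alpha}',\underline{\alpha}''}$ were literally $[W^{\underline{\alpha}}_{\underline{\alpha}',\underline{\alpha}''}/G_{\underline{\alpha}'}\times G_{\underline{\alpha}''}]$ with a free action, as your ``trivial stabilizers'' claim assumes. Then it would be an algebraic space, namely the relative Grassmannian of subcomplexes over $C_2(Q,\underline{\alpha})$. The map $b$ would factor through the $G_{\underline{\alpha}}$-torsor $C_2(Q,\underline{\alpha})\to\mathcal{M}_{\underline{\alpha}}$, so it would not be proper. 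That contradicts the Lemma you invoke in your second argument, and also the fibre description of $(a_1,a_2)$ in Proposition \ref{Motivic Riedtmann-Peng}.

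For those statements to hold, the quotient must also divide by $G_{\underline{\alpha}}$, acting on the middle term. The stabilizers are then the automorphism groups of the filtrations $(X''\subseteq X)$, not trivial. Your direct argument applies verbatim to the larger affine group, so the conclusion is unaffected.
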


\subsection{Motivic Hall algebra of \texorpdfstring{$ \mathcal{C}_2(\mathcal{A}) $}{C2A}}\;
\label{Motivic Hall algebra of C2A}

We follow the canonical process of constructing motivic Hall algebras in the framework of \cite{Joyce07}, see also \cite[Section 4.3, 4.4]{FLX24}.

Let $ \mathrm{St}/\mathcal{M} $ be the full subcategory of $ \mathcal{M} $ consisting of objects of the form $ f:\mathcal{F} \to \mathcal{M} $, where $ \mathcal{F} $ is of finite type over $ \mathbb{C} $.

Let $  K(\mathrm{St}/\mathcal{M}) $ be the relative Grothendieck group of $ \mathrm{St}/\mathcal{M} $, see \cite[Section 3.4]{Bridgeland12} or \cite[Section 4.3]{FLX24} for details, and 
$  K^0(\mathrm{St}/\mathcal{M}) $ be its subgroup generated by elements  of the form $ [\mathcal{F} \xrightarrow{f} \mathcal{M}] $, where $ f $ is representable.

In particular, view $ \mathbb{C} $ as a stack, we denote the elements of $  K(\mathrm{St}/\mathbb{C}) $ by $ [\mathcal{F}] = [\mathcal{F} \to \mathbb{C}] $.
There is a unique ring homomorphism $ \Upsilon: K(\mathrm{St}/\mathbb{C}) \to \mathbb{C}(t) $ such that for any smooth projective variety $ X $ over $ \mathbb{C} $, regarded as a representable stack, 
\[\Upsilon([X]) = \sum_{i=0}^{2\mathrm{dim}X} \mathrm{dim}\; \mathrm{H}^i_{\mathrm{c}}(X_{\mathrm{an}},\mathbb{C}) \cdot t^i \in \mathbb{C}[t]\]
is the Poincaré polynomial of $ X $, where $ X_{\mathrm{an}} = X $ is regarded as a compact complex manifold with analytic topology. 
For any projective variety over $ \mathbb{C} $ together with a special algebraic group $ G $-action,
\[\Upsilon([X/G])=\Upsilon([X])\Upsilon([G])^{-1}\] 
by \cite[Example 2.12, Theorem 2.14]{Joyce07} and \cite[Section 5.3]{Bridgeland17}.

\begin{definition}[{\cite[Definition 2.15]{Joyce07}, \cite[Section 5.3]{Bridgeland17}, \cite[Definition 4.3]{FLX24}}]\;
    \label{Scissor relation}    
    
    Let $ K_{\Upsilon}(\mathrm{St}/\mathcal{M}) = \mathbb{C}(t) \otimes_{K(\mathrm{St}/\mathbb{C})} K(\mathrm{St}/\mathcal{M}) $ be the $ \mathbb{C}(t) $-vector space with a basis given by the isomorphism classes of objects in $ \mathrm{St}/\mathcal{M} $ modulo the relations
    \begin{enumerate}
        \item $ [\mathcal{F} \xrightarrow{f} \mathcal{M}] = [\mathcal{V} \xrightarrow{f|_{\mathcal{V}}} \mathcal{M}] + [\mathcal{I} \xrightarrow{f|_{\mathcal{I}}} \mathcal{M}]$ for any object $ \mathcal{F} \xrightarrow{f} \mathcal{M} $ and any closed substack $ \mathcal{V} \subseteq \mathcal{F} $  with open complement $ \mathcal{I} $;
        \item $ [\mathcal{G}_1 \xrightarrow{f g_1}\mathcal{M}] =  [\mathcal{G}_2\xrightarrow{f g_2}\mathcal{M}]  $ for any object $ \mathcal{F} \xrightarrow{f} \mathcal{M} $ and any morphisms $ g_1:\mathcal{G}_1 \to \mathcal{F}, g_2:\mathcal{G}_2 \to \mathcal{F} $ which are Zariski locally trivial fibrations with the same fibers;
        \item $ \Upsilon([\mathcal{G}]) [\mathcal{F} \xrightarrow{f} \mathcal{M}] = [\mathcal{G} \times \mathcal{F} \xrightarrow{f \pi_2} \mathcal{M}]$ for any object $ \mathcal{F} \xrightarrow{f} \mathcal{M} $ in $ K(\mathrm{St}/\mathcal{M}) $ and any oject $ \mathcal{G} $ in $ K(\mathrm{St}/\mathbb{C}) $.
    \end{enumerate}
Let $ K^0_{\Upsilon}(\mathrm{St}/\mathcal{M})= \mathbb{C}(t) \otimes_{K(\mathrm{St}/\mathbb{C})} K^0(\mathrm{St}/\mathcal{M})  $ be the $ \mathbb{C}(t) $-subspace spanned by elements  of the form $ [\mathcal{F} \xrightarrow{f} \mathcal{M}] $, where $ f $ is representable.
\end{definition}

The ring structure of $ K(\mathrm{St}/\mathcal{M}) $ is given by the convolution product
\[ 
b_{*}\circ(a_1,a_2)^*:K(\mathrm{St}/\mathcal{M}) \otimes K(\mathrm{St}/\mathcal{M}) \to K(\mathrm{St}/\mathcal{M}).
\] 
Explicitly, this product, denoted by $ \diamond $, is given by the rule
\[
[\mathcal{F}_1 \xrightarrow{f_1} \mathcal{M}] \diamond [\mathcal{F}_2 \xrightarrow{f_2} \mathcal{M}] = [\mathcal{G} \xrightarrow{bg} \mathcal{M}], 
\]
where $ g:\mathcal{G} \to \mathcal{M}^{(2)} $ is given by the Cartesian diagram
\[
\begin{tikzcd}
\mathcal{G} \arrow[r,"g"]\arrow[d,""] & \mathcal{M}^{(2)} \arrow[r,"b"] \arrow[d,"{(a_1,a_2)}"] & \mathcal{M} \\
\mathcal{F}_1 \times \mathcal{F}_2 \arrow[r,"f_1 \times f_2"]& \mathcal{M} \times \mathcal{M}.
\end{tikzcd}
\]

\begin{theorem}[{\itshape\cite[Theorem 5.2]{Joyce07}, \cite[Theorem 4.3]{Bridgeland12}, \cite[Theorem 4.8, Corollary 4.9]{FLX24}}]\;

    The convolution product $ \diamond $ defines a multiplication such that $ K(\mathrm{St/\mathcal{M}}) $ is an associative algebra over $ K(\mathrm{St}/\mathbb{C}) $ with the unit element 
    \[ 1= [\mathcal{M}_0 \hookrightarrow \mathcal{M}],\]
    where $ \mathcal{M}_0 \cong \mathrm{Spec}\;\mathbb{C} $ is the substack of the zero complex in $ \mathcal{C}_2(\mathcal{A}) $, and $ K^0(\mathrm{St}/\mathcal{M})  $ is a $  K(\mathrm{St}/\mathbb{C}) $-subalgebra. 
    
    Moreover, $ K_{\Upsilon}(\mathrm{St}/\mathcal{M}) $ becomes an associative algebra over $ \mathbb{C}(t) $ under this convolution product with the unit element $ 1 $, and $ K^0_{\Upsilon}(\mathrm{St}/\mathcal{M}) $ is a $ \mathbb{C}(t) $-subalgebra. 
\end{theorem}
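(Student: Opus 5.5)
The plan is to follow Joyce's construction for the abelian category of coherent sheaves and Bridgeland's version for quiver representations. We verify that the only inputs used there hold for the moduli stacks $\mathcal{M}$ and $\mathcal{M}^{(2)}$ of $\mathcal{C}_2(\mathcal{A})$. These inputs are: $\mathcal{M}$ is an Artin stack, locally of finite type, with affine stabilizers; $\mathcal{M}^{(2)}$ is algebraic; $b$ is representable and proper; and $(a_1,a_2)$ is of finite type. All four are supplied by the lemma and corollary quoted from \cite{FLX24} above.

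\emph{Well-definedness.} Given $[\mathcal{F}_1\to\mathcal{M}]$ and $[\mathcal{F}_2\to\mathcal{M}]$ with $\mathcal{F}_i$ of finite type, the fibre product $\mathcal{G}$ is of finite type because $(a_1,a_2)$ is of finite type. Hence $[\mathcal{G}\xrightarrow{bg}\mathcal{M}]$ lies in $\mathrm{St}/\mathcal{M}$. The pullback $(a_1,a_2)^*$ respects the cut-and-paste relation and Zariski-locally trivial fibrations, since both are stable under base change. The pushforward $b_*$ is composition, which preserves everything. So $\diamond$ descends to $K(\mathrm{St}/\mathcal{M})$ and is $K(\mathrm{St}/\mathbb{C})$-bilinear.

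\emph{Associativity.} This is the main step. Introduce the stack $\mathcal{M}^{(3)}$ of two-step filtrations $X_1\subseteq X_2\subseteq X$ in $\mathcal{C}_2(\mathcal{A})$, built from pairs of composable short exact sequences exactly as $W$ was built. The key point is that the two squares
\[
\mathcal{M}^{(3)}\to\mathcal{M}^{(2)}\times\mathcal{M}
\qquad\text{and}\qquad
\mathcal{M}^{(3)}\to\mathcal{M}\times\mathcal{M}^{(2)}
\]
are Cartesian over $\mathcal{M}^{(2)}$ via $b\times 1$ and $1\times b$. The first map sends a flag to $(X_2\subseteq X,\,X_1\subseteq X_2)$, suitably ordered. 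The second uses $X_2/X_1\subseteq X/X_1$. The first square holds tautologically. The second uses the correspondence theorem in the abelian category $\mathcal{C}_2(\mathcal{A})$, that subobjects of $X/X_1$ correspond to subobjects of $X$ containing $X_1$. Both triple products then equal $[\mathcal{F}_1\times_{\mathcal{M}}\ldots\to\mathcal{M}^{(3)}\to\mathcal{M}]$, and associativity follows by base change for stacks. This functorial identification of the fibre products is the main obstacle. No projectives are used, since everything is phrased via short exact sequences, so the lack of enough projectives in $\mathcal{A}$ does not matter.

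\emph{Unit.} For the unit, pulling back along $\mathcal{M}_0\times\mathcal{F}$ picks out filtrations $X''\subseteq X$ with $X/X''=0$, so $b$ is an isomorphism onto $\mathcal{F}$. The symmetric argument gives the other side.

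\emph{Representability.} If the $f_i$ are representable, then $\mathcal{G}\to\mathcal{M}^{(2)}$ is representable, being a base change of $f_1\times f_2$. Since $b$ is representable, the composite $bg$ is representable, so $K^0$ is a subalgebra.

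\emph{Passage to $\mathbb{C}(t)$.} Tensoring over $K(\mathrm{St}/\mathbb{C})$ via $\Upsilon$ is compatible with $\diamond$ because $\diamond$ is $K(\mathrm{St}/\mathbb{C})$-bilinear. Relation (3) of Definition~\ref{Scissor relation} is preserved, since $(\mathcal{G}\times\mathcal{F}_1)\times_{\mathcal{M}\times\mathcal{M}}\mathcal{M}^{(2)}\cong\mathcal{G}\times(\mathcal{F}_1\times_{\mathcal{M}\times\mathcal{M}}\mathcal{M}^{(2)})$. Therefore $K_\Upsilon(\mathrm{St}/\mathcal{M})$ is an associative $\mathbb{C}(t)$-algebra with unit $1$, and $K^0_\Upsilon(\mathrm{St}/\mathcal{M})$ is a $\mathbb{C}(t)$-subalgebra.
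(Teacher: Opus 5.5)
Your proposal follows the standard Joyce--Bridgeland argument, and that is all the paper relies on here. The paper gives no proof of its own; it cites Joyce, Bridgeland and FLX24. The only inputs specific to $\mathcal{C}_2(\mathcal{A})$ are the preceding lemma ($b$ representable and proper, $(a_1,a_2)$ of finite type) and the algebraicity of $\mathcal{M}^{(2)}$, exactly as you say. Your steps for well-definedness, associativity via $\mathcal{M}^{(3)}$ and base change, the unit, and the passage to $\mathbb{C}(t)$ are fine.

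One small labelling point on associativity. With the paper's convention $(a_1,a_2)\colon (X''\subseteq X)\mapsto (X/X'',X'')$, the tautological square is the one through $\mathcal{M}\times\mathcal{M}^{(2)}$ and $1\times b$. The square through $\mathcal{M}^{(2)}\times\mathcal{M}$ and $b\times 1$ is the one needing the correspondence theorem. You used Bridgeland's opposite convention, which is harmless.

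The one step that does not go through is the claim that $K^0(\mathrm{St}/\mathcal{M})$ is a $K(\mathrm{St}/\mathbb{C})$-\emph{subalgebra}. Your representability paragraph shows two things: $K^0$ contains $1$, since $\mathcal{M}_0\hookrightarrow\mathcal{M}$ is an open and closed immersion, and $K^0$ is closed under $\diamond$. It does not show that $K^0$ is stable under the $K(\mathrm{St}/\mathbb{C})$-action, and in fact it is not. The obvious representative $\mathcal{G}\times\mathcal{F}\to\mathcal{M}$ of $[\mathcal{G}]\cdot[\mathcal{F}\to\mathcal{M}]$ is representable only when $\mathcal{G}$ is an algebraic space, and the failure is real. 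For instance, $[B\mathbb{G}_m]\cdot 1\notin K^0$:
\begin{itemize}
\item pulling back to the open and closed substack $\mathcal{M}_0\cong\mathrm{Spec}\;\mathbb{C}$ sends each representable class to the class of an algebraic space of finite type;
\item $\Upsilon$ of any integer combination of such classes lies in $\mathbb{Z}[t]$;
\item but $\Upsilon(B\mathbb{G}_m)=(t^2-1)^{-1}\notin\mathbb{Z}[t]$.
\end{itemize}

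So what your argument correctly proves is that $K^0$ is a unital subring and a module over classes of varieties. The defect lies in the statement, and in the literal formula $\mathbb{C}(t)\otimes_{K(\mathrm{St}/\mathbb{C})}K^0$, not in your method. It does not affect the part used later. Read $K^0_{\Upsilon}(\mathrm{St}/\mathcal{M})$ as the $\mathbb{C}(t)$-span of representable classes in $K_{\Upsilon}(\mathrm{St}/\mathcal{M})$, as the paper also describes it. Then your closure argument, together with $K(\mathrm{St}/\mathbb{C})$-bilinearity of $\diamond$, shows it is a $\mathbb{C}(t)$-subalgebra.
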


Set
\[
\mathcal{H}_t(\mathcal{C}_2(\mathcal{A})) =  K_{\Upsilon}^0(\mathrm{St}/\mathcal{M}),
\]
then by construction
\[
\mathcal{H}_t(\mathcal{C}_2(\mathcal{A})) =  \bigoplus_{\underline{\alpha}\in \mathbb{N}\mathbb{I}\times \mathbb{N}\mathbb{I}}\mathcal{H}_t(\mathcal{C}_2(\mathcal{A}))_{\underline{\alpha}}
\]
is a $ \mathbb{N}\mathbb{I}\times \mathbb{N}\mathbb{I} $-graded $ \mathbb{C}(t) $-algebra, where  $\mathcal{H}_t(\mathcal{C}_2(\mathcal{A}))_{\underline{\alpha}}=K_{\Upsilon}^0(\mathrm{St}/\mathcal{M}_{\underline{\alpha}})$.

The following proposition gives the explicit multiplication structure in $ K^0(\mathrm{St}/\mathcal{M}) $. 

\begin{proposition}[{\itshape\cite[Proposition 5.15]{Joyce07}, \cite[Proposition 4.10]{FLX24}}]\;
\label{Motivic Riedtmann-Peng}

For any $ [\mathcal{F} \xrightarrow{f} \mathcal{M}], [\mathcal{G} \xrightarrow{g} \mathcal{M}] \in K^0(\mathrm{St}/\mathcal{M}) $, assume that
\[\mathcal{F} \times \mathcal{G} \xrightarrow{f \times g} \mathcal{V} \hookrightarrow \mathcal{M} \times \mathcal{M},\]
where $ \mathcal{V} $ is a constructible substack of $ \mathcal{M} \times \mathcal{M} $, then there exists a finite decomposition of $ \mathcal{V} $ into quotient stacks
\[\mathcal{V}= \bigsqcup_{m \in M}[V_m/G_m],\]
where $ V_m $ is a quasi-projective variety over $ \mathbb{C} $, $ G_m $ is a special algebraic group acting on $ V_m $, and $ E_m^0, E_m^1 $ are two finite-dimensional representations of $ G_m $ such that if $ v \in V_m $ projects to $ ([X], [Y]) \in [V_m/G_m](\mathbb{C}) \subseteq \mathcal{M}(\mathbb{C}) \times \mathcal{M}(\mathbb{C}) $ corresponding to two isomorphism classes of objects in $ \mathcal{C}_2(\mathcal{A}) $, then
    \begin{align*}
        \mathrm{Stab}_{G_m}(v) \cong \mathrm{Aut}_{\mathcal{C}_2(\mathcal{A})}(X) \times \mathrm{Aut}_{\mathcal{C}_2(\mathcal{A})}(Y),\;E_m^0 \cong \mathrm{Hom}_{\mathcal{C}_2(\mathcal{A})}(X,Y), \; E_m^1 \cong \mathrm{Ext}^1_{\mathcal{C}_2(\mathcal{A})}(X,Y),
    \end{align*}
    and the action of $  \mathrm{Stab}_{G_m}(v) $ on $ E_m^0, E_m^1 $ coincides with that of $ \mathrm{Aut}_{\mathcal{C}_2(\mathcal{A})}(X) \times \mathrm{Aut}_{\mathcal{C}_2(\mathcal{A})}(Y) $ on $ \mathrm{Hom}_{\mathcal{C}_2(\mathcal{A})}(X,Y) $, $ \mathrm{Ext}^1_{\mathcal{C}_2(\mathcal{A})}(X,Y) $, respectively.

   Under these identifications, the fiber of $ (a_1,a_2) $ over $ [V_m/G_m] $ is represented by $ [V_m\times E_m^1/G_m\ltimes E_m^0] $, with the induced morphism
   \[\pi_m: [V_m\times E_m^1/G_m\ltimes E_m^0] \to \mathcal M^{(2)}\]
   corresponding to the map $ (v, \xi) \mapsto (\mathrm{Im}\;a_{\xi}  \subseteq Z_{\xi}) $, where $ 0 \to Y \xrightarrow{a_{\xi}} Z_{\xi} \xrightarrow{b_{\xi}} X \to 0 $ is the short exact sequence determined by $ \xi \in E_m^1 $.

\end{proposition}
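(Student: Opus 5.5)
The plan follows Joyce's argument for abelian categories, transported to $\mathcal{C}_2(\mathcal{A})$ as in \cite{FLX24}. First stratify $\mathcal{V}$. Since $\mathcal{V}$ is a constructible substack of $\mathcal{M}\times\mathcal{M}$ and $\mathcal{F}\times\mathcal{G}$ is of finite type, only finitely many components $\mathcal{M}_{\underline{\alpha}}\times\mathcal{M}_{\underline{\beta}}$ are involved. On each such component, $\mathcal{V}$ is the quotient of a $G_{\underline{\alpha}}\times G_{\underline{\beta}}$-invariant constructible subset of $C_2(Q,\underline{\alpha})\times C_2(Q,\underline{\beta})$. Write this subset as a finite disjoint union of locally closed invariant quasi-projective pieces $V_m$, with $G_m=G_{\underline{\alpha}}\times G_{\underline{\beta}}$. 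This group is special as a product of general linear groups, and the stabilizer of $v=(x,y)$ is $\mathrm{Aut}(X)\times\mathrm{Aut}(Y)$, as recorded in the canonical bijections of Section \ref{Moduli stacks of objects and filtrations in C2A}.

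Second, refine the stratification so that the functions $v\mapsto \dim\mathrm{Hom}_{\mathcal{C}_2(\mathcal{A})}(X,Y)$ and $v\mapsto\dim\mathrm{Ext}^1_{\mathcal{C}_2(\mathcal{A})}(X,Y)$ are constant on each piece. For $\mathrm{Hom}$ this works because $\mathrm{Hom}(X,Y)$ is the kernel of a linear map $\delta_v$ between fixed vector spaces, namely the fibre of $\mathbf{Hom}(\underline{\alpha},\underline{\beta})$, which depends algebraically on $v$. So its dimension is upper semicontinuous, and on a stratum of constant rank the kernels form a $G_m$-equivariant vector bundle. Since $\mathcal{A}$ lacks enough projectives, we cannot use a projective resolution for $\mathrm{Ext}^1$. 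Instead we use the complex
\[
0\to \mathrm{Hom}(X,Y)\to C^0_v \xrightarrow{\delta_v} C^1_v \xrightarrow{\epsilon_v} C^2_v,
\]
where $C^0_v=\bigoplus_j\mathrm{Hom}(\mathbb{C}^{\alpha^j},\mathbb{C}^{\beta^j})$ and $C^1_v$ consists of the off-diagonal blocks for the arrows and differentials. A cocycle in $\ker\epsilon_v$ is precisely a block-triangular extension datum $(x^{\mathrm{mid}}_h,d^{\mathrm{mid}})$: it makes the middle term a nilpotent $\mathbb{Z}/2$-complex, with the differential squaring to zero and compatible with the arrows. Nilpotence is automatic, since the middle term is an extension of nilpotent representations. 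Because every extension $0\to Y\to Z\to X\to 0$ splits on underlying graded vector spaces, $\mathrm{Ext}^1_{\mathcal{C}_2(\mathcal{A})}(X,Y)\cong\ker\epsilon_v/\mathrm{Im}\,\delta_v$. All maps are algebraic in $v$ and $G_m$-equivariant. After further stratifying by the ranks of $\delta_v$ and $\epsilon_v$, we obtain equivariant vector bundles realizing $\mathrm{Hom}$ and $\mathrm{Ext}^1$.

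Third, trivialize these bundles. I expect this to be the main obstacle: one needs honest $G_m$-representations $E^0_m,E^1_m$ rather than just equivariant bundles. Following Joyce, a further stratification of $V_m$ makes the bundles Zariski-locally trivial. Then, because $G_m$ is special, one can replace $[V_m/G_m]$ by a presentation of the form $[V'_m/G'_m]$. Here $G'_m$ is again a product of general linear groups, and the fibres $\ker\delta_v$ and $\ker\epsilon_v/\mathrm{Im}\,\delta_v$ become fixed representations. This is done by adjoining frames of the bundles, which is a $\mathrm{GL}$-torsor and so does not change the motive, exactly as in \cite[Proposition 5.15]{Joyce07}. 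The stabilizer action on $E^0_m,E^1_m$ is by construction the natural action of $\mathrm{Aut}(X)\times\mathrm{Aut}(Y)$ by composition.

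Finally, identify the fibre of $(a_1,a_2)$. Over a point $v$, the filtrations $Y\subseteq Z$ with quotient $X$ are parametrized by the extension cocycles in $\ker\epsilon_v$ modulo isomorphisms of such extensions fixing $X$ and $Y$. These isomorphisms are the unipotent block maps $\mathrm{id}+\phi$ with $\phi\in C^0_v$, which act through $\delta_v$. Using the splitting $\ker\epsilon_v\cong E^1_m\oplus\mathrm{Im}\,\delta_v$, the quotient becomes $[E^1_m/E^0_m]$ with $E^0_m$ acting trivially, since the kernel of $\delta_v$ is $\mathrm{Hom}(X,Y)$. Globally this gives $[V_m\times E^1_m/G_m\ltimes E^0_m]\to\mathcal{M}^{(2)}$, sending $(v,\xi)$ to the subobject $\mathrm{Im}\,a_\xi\subseteq Z_\xi$ of the middle term $Z_\xi$.
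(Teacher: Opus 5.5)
Your first three steps are sound and follow the route behind the cited results; the paper gives no argument beyond citing Joyce and Fang--Lan--Xiao. The block-triangular cocycle complex is a good substitute for the projective resolutions that $\mathcal{A}$ lacks. Framing the $\mathrm{Hom}$- and $\mathrm{Ext}^1$-bundles also correctly produces honest representations $E^0_m,E^1_m$ with the right stabilizer actions.

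\textbf{The gap is in the last step.} To turn the pointwise identification $[\ker\epsilon_v/C^0]\simeq[\mathrm{Ext}^1(X,Y)/\mathrm{Hom}(X,Y)]$ into a morphism $\pi_m$ out of $[V_m\times E^1_m/G_m\ltimes E^0_m]$ over $[V_m/G_m]$, you need both splittings $\ker\epsilon_v\cong E^1_m\oplus\mathrm{Im}\,\delta_v$ and $C^0\cong E^0_m\oplus C^0/E^0_m$ to be $G_m$-equivariant. Here $C^0$ is your $C^0_v$, which does not depend on $v$. Equivariance forces invariance under $\mathrm{Stab}_{G_m}(v)\cong\mathrm{Aut}(X)\times\mathrm{Aut}(Y)$ at every $v$. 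These groups are not reductive, and adjoining frames does not change stabilizers, so no such choice exists in general.

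What your construction actually yields is the parabolic presentation $[K_m/G_m\ltimes C^0]$ of the fibre. Here $K_m=\{(v,c)\mid c\in\ker\epsilon_v\}$, and $C^0$ acts by translation through $\delta_v$. Its stabilizer at $(v,c)$ is $\mathrm{Aut}(Z_\xi;Y)$, the group of automorphisms of $Z_\xi$ preserving $Y$. This group is an extension of $\mathrm{Stab}(\xi)\subseteq\mathrm{Aut}(X)\times\mathrm{Aut}(Y)$ by $\mathrm{Hom}(X,Y)$. In your model the stabilizer is the split group $\mathrm{Stab}(\xi)\ltimes E^0_m$. Restricting to $\mathrm{Stab}(\xi)$ the stabilizer map induced by any $\pi_m$ over $[V_m/G_m]$ would therefore split this extension.

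\textbf{The extension need not split.} Let $Q$ have one vertex with two loops $a,b$, and put $R=\mathbb{C}\langle a,b\rangle/(a^2,b^2,ba)$, $Z={}_RR$, $Y=\mathbb{C}ab\subseteq Z$, $X=Z/Y$; work with $C_X,C_Y,C_Z$. Every endomorphism of $Z$ is a right multiplication $\rho_r$, and all of them preserve $Y$, so $\mathrm{Aut}(Z;Y)\cong(R^{\mathrm{op}})^{\times}$.
\begin{itemize}
\item We have $[\rho_a,\rho_b]=\rho_{ba}-\rho_{ab}=-\rho_{ab}$, and $\rho_{ab}$ spans the central subgroup $\mathrm{Hom}(X,Y)\cong\mathbb{C}$.
\item Hence $1+s\rho_a$ and $1+t\rho_b$ have group commutator $1-st\rho_{ab}$. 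Multiplying by central elements does not change this, so no lifts of their images commute.
\item Their images in $\mathrm{Stab}(\xi)$ do commute. Indeed $\mathrm{Stab}(\xi)$ is the unit group of the commutative algebra $R/(ab)$, and it acts trivially on $\mathrm{Hom}(X,Y)$.
\item So $\mathrm{Stab}(\xi)\ltimes\mathrm{Hom}(X,Y)$ is commutative, while $\mathrm{Aut}(Z;Y)$ is not.
\end{itemize}
Thus the final assertion is false as an isomorphism of stacks in this generality, and no argument can close this step as stated.

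\textbf{What does hold, and should replace your last step, is an identity of classes; this is all the later computations in the paper use.} Since $G_m\ltimes C^0$ is special, $[[K_m/G_m\ltimes C^0]\to\mathcal{M}]=\Upsilon(G_m)^{-1}\Upsilon(C^0)^{-1}[K_m\to\mathcal{M}]$. The locus of $K_m$ with middle term $\cong Z$ is a Zariski-locally trivial affine bundle of rank $\dim\mathrm{Im}\,\delta_v$ over the corresponding locus of the $\mathrm{Ext}^1$-bundle. Moreover $\Upsilon(C^0)=t^{2(\dim\mathrm{Hom}(X,Y)+\dim\mathrm{Im}\,\delta_v)}$. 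Together these give, over each point of $[V_m/G_m]$, exactly the weight $\Upsilon(\mathrm{Ext}^1(X,Y)_Z)\,\Upsilon(\mathrm{Aut}(X)\times\mathrm{Aut}(Y))^{-1}t^{-2\dim\mathrm{Hom}(X,Y)}$ that the split model would give.
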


\begin{remark}
    \label{Motivic Riedtmann-Peng (XY)}
    In particular, for any $ \delta_{[X]} = [\mathrm{Spec}\;\mathbb{C} \xrightarrow{i_{[X]}} \mathcal{M} ], \delta_{[Y]} = [\mathrm{Spec}\;\mathbb{C} \xrightarrow{i_{[Y]}} \mathcal{M} ]$ with $ X,Y \in \mathcal{C}_2(\mathcal{A}) $, we have
    \[\delta_{[X]} \diamond \delta_{[Y]} = [[\mathrm{Ext}^1_{\mathcal{C}_2(\mathcal{A})}(X,Y)/(\mathrm{Aut}_{\mathcal{C}_2(\mathcal{A})}(X) \times \mathrm{Aut}_{\mathcal{C}_2(\mathcal{A})}(Y)) \ltimes \mathrm{Hom}_{\mathcal{C}_2(\mathcal{A})}(X, Y)]\xrightarrow{\varphi} \mathcal{M}],\]
    where $ \varphi $ corresponds to the map $ \xi \mapsto [Z_{\xi}] $. 
    This can be viewed as a motivic analogue of the Riedtmann-Peng formula for Ringel-Hall algebras over a finite field. 
\end{remark}

\subsection{Localization and reduced quotient}\;
\label{Localization and reduced quotient}

Let $ K(\mathcal{A}) $ be the Grothendieck group of $ \mathcal{A} $ and $ \langle \cdot, \cdot \rangle : K(\mathcal{A}) \times K(\mathcal{A}) \to \mathbb{Z} $ be the Euler form given by
\begin{align*}
    \langle \widehat{M}, \widehat{N} \rangle = \mathrm{dim}_{\mathbb{C}}\mathrm{Hom}_{\mathcal{A}} (M,N) - \mathrm{dim}_{\mathbb{C}}\mathrm{Ext}^1_{\mathcal{A}}(M,N),
\end{align*}
where $ \widehat{M}, \widehat{N} $ denotes the images of $ M, N \in \mathcal{A} $ in $ K(\mathcal{A}) $. 
Let $ K(\mathcal{C}_2(\mathcal{A})) $ be the Grothendieck group of $ \mathcal{C}_2(\mathcal{A}) $. 
Recall that by \cite[Section 4.3]{LP21} there is a component-wise Euler form 
\begin{align*}
    \langle\cdot,\cdot \rangle_{\mathrm{cw}}: K(\mathcal{C}_2(\mathcal{A})) \times K(\mathcal{C}_2(\mathcal{A})) \to \mathbb{Z}
\end{align*}
given by
\begin{align*}
    \langle X, Y \rangle_{\mathrm{cw}} = \langle \widehat{X^1}, \widehat{Y^1} \rangle + \langle \widehat{X^0}, \widehat{Y^0 }\rangle,
\end{align*}
where $ \begin{tikzcd}
    X = (X^1 \arrow[r,"d^1_X"] & X^0 \arrow[l, shift left,"d^0_X"])
\end{tikzcd} , \begin{tikzcd}
    Y = (Y^1 \arrow[r,"d^1_Y"] & Y^0 \arrow[l, shift left,"d^0_Y"])
\end{tikzcd} \in \mathcal{C}_2(\mathcal{A}) $. 

Define the twist form $ \mathcal{H}_t^{\mathrm{tw}}(\mathcal{C}_2(\mathcal{A})) $ to be the $ \mathbb{C}(t) $-algebra with underlying vector space the same as $ \mathcal{H}_t(\mathcal{C}_2(\mathcal{A})) $ and multiplication given by the twisted multiplication
\begin{align*}
    &[\mathcal{F} \xrightarrow{f} \mathcal{M}_{\underline{\alpha}} \hookrightarrow \mathcal{M}] * [\mathcal{G} \xrightarrow{g} \mathcal{M}_{\underline{\beta}} \hookrightarrow \mathcal{M}] \\
    =& (-t)^{\langle \alpha^1,\beta^1 \rangle + \langle \alpha^0,\beta^0 \rangle} [\mathcal{F} \xrightarrow{f} \mathcal{M}_{\underline{\alpha}} \hookrightarrow \mathcal{M}] \diamond [\mathcal{G} \xrightarrow{g} \mathcal{M}_{\underline{\beta}} \hookrightarrow \mathcal{M}],
\end{align*}
where $ \underline{\alpha} = (\alpha^1, \alpha^0), \underline{\beta} = (\beta^1, \beta^0) \in \mathbb{N}\mathbb{I} \times \mathbb{N}\mathbb{I} $. 
$ \mathcal{H}_t^{\mathrm{tw}}(\mathcal{C}_2(\mathcal{A})) $ is still associative and $ \mathbb{N}\mathbb{I} \times \mathbb{N}\mathbb{I} $-graded by the bilinearity of $ \langle \cdot, \cdot \rangle_{\mathrm{cw}} $.

Let $ \mathcal{C}_{2,\mathrm{ac}}(\mathcal{A}) $ be the full subcategory of $ \mathcal{C}_2(\mathcal{A}) $ consisting of acyclic complexes. 
Let $ I_2(\mathcal{A}) $ be the two-sided ideal of $ \mathcal{H}_t^{\mathrm{tw}}(\mathcal{C}_2(\mathcal{A})) $ generated by all differences
\begin{align*}
    \Upsilon(\mathrm{Aut}_{\mathcal{C}_2(\mathcal{A})}(L))\delta_{[L]} - \Upsilon(\mathrm{Aut}_{\mathcal{C}_2(\mathcal{A})}(K \oplus X))\delta_{[K \oplus X]} 
\end{align*}
if there exists a short exact sequence
\begin{align*}
     0 \to K \to L \to X \to 0  \text{ in } \mathcal{C}_2(\mathcal{A})  \text{ with } K \in \mathcal{C}_{2,\mathrm{ac}}(\mathcal{A}).
\end{align*}
Dually, let $ J_2(\mathcal{A}) $ be the two-sided ideal of $ \mathcal{H}_t^{\mathrm{tw}}(\mathcal{C}_2(\mathcal{A})) $ generated by all differences
\begin{align*}
    \Upsilon(\mathrm{Aut}_{\mathcal{C}_2(\mathcal{A})}(L))\delta_{[L]} - \Upsilon(\mathrm{Aut}_{\mathcal{C}_2(\mathcal{A})}(K \oplus X))\delta_{[K \oplus X]} 
\end{align*}
if there exists a short exact sequence
\begin{align*}
     0 \to X \to L \to K \to 0  \text{ in } \mathcal{C}_2(\mathcal{A})  \text{ with } K \in \mathcal{C}_{2,\mathrm{ac}}(\mathcal{A}).
\end{align*}

Consider the quotient algebra
\begin{align*}
    \mathcal{H}_t^{\mathrm{tw}}(\mathcal{C}_2(\mathcal{A}))/(I_2(\mathcal{A})+J_2(\mathcal{A})),
\end{align*}
with induced multiplication still denoted by $ * $ and we use the same symbols in $ \mathcal{H}_t^{\mathrm{tw}}(\mathcal{C}_2(\mathcal{A})) $ and $ \mathcal{H}_t^{\mathrm{tw}}(\mathcal{C}_2(\mathcal{A}))/(I_2(\mathcal{A})+J_2(\mathcal{A})) $.

By \cite[Proposition 2.3]{LP21},  there are well-defined bilinear forms, still called Euler forms, 
\begin{align*}
    \langle \cdot, \cdot \rangle : K(\mathcal{C}_{2,\mathrm{ac}}(\mathcal{A})) \times K(\mathcal{C}_2(\mathcal{A})) \to \mathbb{Z} \text{ and }
     \langle \cdot, \cdot \rangle : K(\mathcal{C}_2(\mathcal{A})) \times K(\mathcal{C}_{2,\mathrm{ac}}(\mathcal{A})) \to \mathbb{Z}
\end{align*}
given by
\begin{align*}
    \langle K,X \rangle &=  \mathrm{dim}_{\mathbb{C}}\mathrm{Hom}_{\mathcal{C}_2(\mathcal{A})}(K,X) - \mathrm{dim}_{\mathbb{C}}\mathrm{Ext}^1_{\mathcal{C}_2(\mathcal{A})}(K,X), \\
    \langle X,K \rangle &=  \mathrm{dim}_{\mathbb{C}}\mathrm{Hom}_{\mathcal{C}_2(\mathcal{A})}(X,K) - \mathrm{dim}_{\mathbb{C}}\mathrm{Ext}^1_{\mathcal{C}_2(\mathcal{A})}(X,K), 
\end{align*}
where $ K \in \mathcal{C}_{2,\mathrm{ac}}(\mathcal{A})  $ and $ X \in \mathcal{C}_2(\mathcal{A}) $. 

For any $ A \in \mathcal{A} $, consider the complexes
\[\begin{tikzcd}
    C_A = (0 \arrow[r,"0"] & A),\;  C_A^* = (A \arrow[l, shift left, "0"] \arrow[r,"0"] & 0) \arrow[l, shift left, "0"],
    K_A = (A \arrow[r,"1"] & A),\;  K_A^* = (A \arrow[l, shift left, "0"] \arrow[r,"0"] & A \arrow[l, shift left, "1"])
\end{tikzcd} \]
in $ \mathcal{C}_2(\mathcal{A}) $.

\begin{proposition}
     \label{KX=XK}
     For any   $ \begin{tikzcd}
    K = (K^1 \arrow[r,"k^1"] & K^0) \arrow[l, shift left,"k^0"] \end{tikzcd} \in \mathcal{C}_{2,\mathrm{ac}}(\mathcal{A}) $ and $ [\mathcal{F} \xrightarrow{f} \mathcal{M}_{\underline{\beta}} \hookrightarrow \mathcal{M} ] \in \mathcal{H}_t^{\mathrm{tw}}(\mathcal{C}_2(\mathcal{A})) $, in $ \mathcal{H}_t^{\mathrm{tw}}(\mathcal{C}_2(\mathcal{A}))/(I_2(\mathcal{A})+J_2(\mathcal{A})) $, we have
     \begin{align*}
         &\delta_{[K]} * [\mathcal{F} \xrightarrow{f} \mathcal{M}_{\underline{\beta}} \hookrightarrow \mathcal{M} ] \\
         =& (-t)^{\langle \widehat{K^1} - 2 \widehat{\mathrm{Im}\;k^1},\beta^1 \rangle + \langle \widehat{K^0} - 2 \widehat{\mathrm{Im}\;k^0},\beta^0 \rangle + \langle \beta^1,2 \widehat{\mathrm{Im}\;k^0}-\widehat{K^1}  \rangle + \langle \beta^0, 2 \widehat{\mathrm{Im}\;k^1}-\widehat{K^0}\rangle}[\mathcal{F} \xrightarrow{f} \mathcal{M}_{\underline{\beta}} \hookrightarrow \mathcal{M} ] * \delta_{[K]}.
     \end{align*}
     
\end{proposition}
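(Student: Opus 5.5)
We reduce first to the case where $K$ is one of the basic acyclic complexes $K_A$ or $K_A^*$, and then handle that case by a direct motivic Riedtmann--Peng computation. Any acyclic $K$ fits into a short exact sequence
\[0 \to K_{\mathrm{Im}\,k^0}^{*}\to K \to K_{\mathrm{Im}\,k^1} \to 0 .\]
Both ends are acyclic, so the defining relations of $I_2(\mathcal{A})$ (or $J_2(\mathcal{A})$) give
\[\Upsilon(\mathrm{Aut}(K))\delta_{[K]} = \Upsilon(\mathrm{Aut}(K_{\mathrm{Im}\,k^1}\oplus K^*_{\mathrm{Im}\,k^0}))\,\delta_{[K_{\mathrm{Im}\,k^1}\oplus K^*_{\mathrm{Im}\,k^0}]}.\]
Next we express the direct sum as a scalar multiple of $\delta_{[K_{B}]}*\delta_{[K^*_{C}]}$, with $B=\mathrm{Im}\,k^1$ and $C=\mathrm{Im}\,k^0$. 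This uses Remark \ref{Motivic Riedtmann-Peng (XY)}: since $K_B$ and $K_C^*$ are acyclic, they are projective-injective relative to the relevant Ext groups, so the Ext group between them vanishes. Indeed, $\mathrm{Ext}^1_{\mathcal{C}_2(\mathcal{A})}(K_A, X)\cong \mathrm{Ext}^1_{\mathcal{A}}(A,X^1)$, and one checks the relevant vanishing for the acyclic pairs in question. Note also that $\widehat{K^1}=\widehat{B}+\widehat{C}$ and $\widehat{K^0}=\widehat{B}+\widehat{C}$. The exponent in the statement is bilinear in the class of $K$ (through $B$ and $C$), so once it is verified for $K_B$ and for $K^*_C$ separately it follows for $K$ by multiplicativity. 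The scalar factors coming from $\Upsilon(\mathrm{Aut})$ and the twist are central and commute past the element.

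For $K=K_A$ and a class $[\mathcal{F}\xrightarrow{f}\mathcal{M}_{\underline\beta}]$, we apply Proposition \ref{Motivic Riedtmann-Peng} to both products $\delta_{[K_A]}\diamond[\mathcal{F}\to\mathcal{M}]$ and $[\mathcal{F}\to\mathcal{M}]\diamond\delta_{[K_A]}$, stratifying $\mathcal{F}$ so that the point $X$ varies over strata with constant Hom/Ext dimensions. Using adjunctions we compute
\begin{align*}
&\mathrm{Hom}_{\mathcal{C}_2}(K_A,X)\cong \mathrm{Hom}_{\mathcal{A}}(A,X^1), & &\mathrm{Ext}^1_{\mathcal{C}_2}(K_A,X)\cong \mathrm{Ext}^1_{\mathcal{A}}(A,X^1),\\
&\mathrm{Hom}_{\mathcal{C}_2}(X,K_A)\cong \mathrm{Hom}_{\mathcal{A}}(X^0,A), & &\mathrm{Ext}^1_{\mathcal{C}_2}(X,K_A)\cong \mathrm{Ext}^1_{\mathcal{A}}(X^0,A).
\end{align*}
In the quotient by $I_2(\mathcal{A})+J_2(\mathcal{A})$, every extension $0\to X\to L\to K_A\to 0$ or $0\to K_A\to L\to X\to 0$ can be replaced by $\Upsilon(\mathrm{Aut}(K_A\oplus X))/\Upsilon(\mathrm{Aut}(L))\,\delta_{[K_A\oplus X]}$. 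After this replacement, each stack $[\mathrm{Ext}^1/(\mathrm{Aut}\times\mathrm{Aut})\ltimes\mathrm{Hom}]$ collapses to a constant times $\delta_{[K_A\oplus X]}$, namely $\Upsilon$ of an affine space of dimension $\dim\mathrm{Ext}^1$ divided by $\Upsilon$ of one of dimension $\dim\mathrm{Hom}$. Since $\Upsilon(\mathbb{A}^1)=t^2$, this constant is $t^{-2\langle A,X^1\rangle}$ on one side and $t^{-2\langle X^0,A\rangle}$ on the other, times the common factor $\Upsilon(\mathrm{Aut}(K_A\oplus X))/(\Upsilon(\mathrm{Aut} K_A)\Upsilon(\mathrm{Aut} X))$.

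Dividing the two constants and including the twists $(-t)^{\langle A,\beta^1\rangle+\langle A,\beta^0\rangle}$ and $(-t)^{\langle\beta^1,A\rangle+\langle\beta^0,A\rangle}$, we obtain the ratio
\[(-t)^{\langle A,\beta^0\rangle-\langle A,\beta^1\rangle+\langle\beta^1,A\rangle-\langle\beta^0,A\rangle},\]
using $(-t)^2=t^2$. This matches the statement for $K^1=K^0=A=\mathrm{Im}\,k^1$ and $\mathrm{Im}\,k^0=0$. The case $K^*_A$ is symmetric, with the degrees swapped. Because each side depends on $X$ only through $\underline\beta$, the stratawise identities glue, by the scissor relations, into the stated identity for arbitrary $[\mathcal{F}\to\mathcal{M}]$.

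The main obstacle is making the motivic version of ``sum over extensions collapses to the split term'' rigorous. This requires (i) stratifying $\mathcal{F}$ so that the Hom/Ext dimensions and the middle terms behave uniformly, and (ii) applying the ideal relations fibrewise over a family rather than to single $\delta$'s. For the latter I would argue that $I_2(\mathcal{A})+J_2(\mathcal{A})$ is compatible with constructible families via the fibration relation (2) of Definition \ref{Scissor relation}.
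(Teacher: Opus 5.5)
\textbf{Gap in the first step.} The short exact sequence $0\to K^*_{\mathrm{Im}\,k^0}\to K\to K_{\mathrm{Im}\,k^1}\to 0$ does not exist for a general acyclic $K$. The degree-$0$ part of such a subcomplex is a subobject of $K^0$ that $k^0$ maps isomorphically onto $\mathrm{Im}\,k^0$. It is therefore a complement of $\mathrm{Ker}\,k^0=\mathrm{Im}\,k^1$, so you would need $0\to\mathrm{Im}\,k^1\to K^0\to\mathrm{Im}\,k^0\to 0$ to split; the other orientation fails in the same way.

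A concrete counterexample: take $Q$ the Jordan quiver, $K^1=K^0=\mathbb{C}[x]/(x^2)$ and $k^1=k^0=x$. This $K$ is acyclic with $\mathrm{Im}\,k^1=\mathrm{Im}\,k^0=S$. The only submodules of $\mathbb{C}[x]/(x^2)$ are $0$, $(x)$ and the whole module, and $x$ kills $(x)$. So $K$ has no nonzero proper acyclic subcomplex at all. By the six-term homology sequence, it then admits no nontrivial short exact sequence with acyclic end terms. Hence the generators of $I_2(\mathcal{A})+J_2(\mathcal{A})$ give you no way to rewrite $\delta_{[K]}$ as a multiple of $\delta_{[K_S\oplus K_S^*]}$.

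You also cannot borrow the later identity $\delta_{[X]}=f_t([X])\,\delta_{[C_{H^0(X)}\oplus C^*_{H^1(X)}]}*b_{\widehat{\mathrm{Im}\,d^1}-\widehat{\mathrm{Im}\,d^0}}$. It holds only in the localized reduced algebra, whose construction (the Ore condition) rests on the present proposition. Separately, your justification of the second step is wrong: $\mathrm{Ext}^1_{\mathcal{C}_2(\mathcal{A})}(K_B,K_C^*)\cong\mathrm{Ext}^1_{\mathcal{A}}(B,C)$ need not vanish. Its conclusion still follows from the collapse argument.

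\textbf{How to fix it.} Drop the reduction entirely; this is what the paper does. Your collapse computation for $K_A$ uses the shape of $K_A$ only to evaluate Euler forms. For an arbitrary acyclic $K$ it gives $t^{-2\langle K,X\rangle}$ on one side and $t^{-2\langle X,K\rangle}$ on the other, times the same stabilizer factor. The paper then uses the purely numerical identities from \cite[Proposition 2.4, Corollary 2.5]{LP21}:
\[
\langle K,X\rangle=\langle\widehat{\mathrm{Im}\,k^1},\widehat{X^1}\rangle+\langle\widehat{\mathrm{Im}\,k^0},\widehat{X^0}\rangle,
\qquad
\langle X,K\rangle=\langle\widehat{X^0},\widehat{\mathrm{Im}\,k^1}\rangle+\langle\widehat{X^1},\widehat{\mathrm{Im}\,k^0}\rangle.
\]
These hold even though $K$ need not be an extension of $K_{\mathrm{Im}\,k^1}$ and $K^*_{\mathrm{Im}\,k^0}$. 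Combining them with the twist gives the stated exponent directly.

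\textbf{Sign slip.} Your final ratio for $K_A$ also has a sign error. The correct value is $(-t)^{\langle A,\beta^0\rangle-\langle A,\beta^1\rangle-\langle\beta^1,A\rangle+\langle\beta^0,A\rangle}$; you added the two exponents instead of subtracting them. As written, your formula would give $\langle\widehat A,\widehat B\rangle-\langle\widehat B,\widehat A\rangle$ for $X=C_B$, rather than the symmetric $(\widehat A,\widehat B)$ of Corollary \ref{KACB}.
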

\begin{proof}
    By Proposition \ref{Motivic Riedtmann-Peng} and \cite[Corollary 5.16]{Joyce07}, keeping the notations therein we can write
    \begin{align*}
    &\delta_{[K]} \diamond [\mathcal{F} \xrightarrow{f} \mathcal{M}_{\underline{\beta}} \hookrightarrow \mathcal{M} ] \\
    =& \sum_{m \in M}\sum_{n \in N_m}c_{mn}[[W_{mn} \times E_m^1/G_m \ltimes E_m^0] \xrightarrow{\Tilde{\tau}_{mn}}[V_m \times E_m^1/G_m \ltimes E_m^0] \xrightarrow{\varphi_m }\mathcal{M}],
    \end{align*}
    where $ W_{mn} $ is a quasi-projective variety over $ \mathbb{C} $ acted on by $ G_m $, $ c_{mn} \in \mathbb{C}(t) $, $ \Tilde{\tau}_{mn} $ is induced by the induced map $ \Bar{\tau}_{mn}: [W_{mn}/G_m] \to [V_m/G_m] $ of the $ G_m $-equivariant morphism $ \tau_{mn}: W_{mn} \to V_m $, and $ \varphi_m = b \pi_m  $ corresponds to the map $ (v,\xi) \mapsto [Z_{\xi}] $.

   If $ v \in V_m $ projects to $ ([K],[X]) $, then
\begin{align*}
    \mathrm{Stab}_{G_m}(v) \cong \mathrm{Aut}_{\mathcal{C}_2(\mathcal{A})}(K) \times \mathrm{Aut}_{\mathcal{C}_2(\mathcal{A})}(X),\;E_m^1 \cong \mathrm{Ext}^1_{\mathcal{C}_2(\mathcal{A})}(K,X),\; E_m^0 \cong \mathrm{Hom}_{\mathcal{C}_2(\mathcal{A})}(K,X). 
\end{align*}
Let $  \mathrm{Ext}^1_{\mathcal{C}_2(\mathcal{A})}(K, X)_{Z_{\xi}} \subseteq \mathrm{Ext}^1_{\mathcal{C}_2(\mathcal{A})}(K, X) $  be the constructible subset consisting of $ \xi' $  such that the middle term of the short exact sequence representing $ \xi' $ is isomorphic to $ Z_{\xi} $.
In $ \mathcal{H}_t^{\mathrm{tw}}(\mathcal{C}_2(\mathcal{A}))/(I_2(\mathcal{A})+J_2(\mathcal{A})) $, we have
\begin{align*}
    &\Upsilon(\mathrm{Ext}^1_{\mathcal{C}_2(\mathcal{A})}(K, X)_{Z_{\xi}})\Upsilon(\mathrm{Aut}_{\mathcal{C}_2(\mathcal{A})}(Z_{\xi}))\delta_{[Z_{\xi}]} \\
    =& \Upsilon(\mathrm{Ext}^1_{\mathcal{C}_2(\mathcal{A})}(K, X)_{Z_{\xi}})\Upsilon(\mathrm{Aut}_{\mathcal{C}_2(\mathcal{A})}(K \oplus X))[\mathrm{Spec}\;\mathbb{C} \xrightarrow{i_{[K \oplus X]}} \mathcal{M}] \\
    =& \Upsilon(\mathrm{Ext}^1_{\mathcal{C}_2(\mathcal{A})}(K, X)_{Z_{\xi}})[\mathrm{Ext}^1_{\mathcal{C}_2(\mathcal{A})}(K,X)_{K \oplus X} \xrightarrow{\varphi_m\pi} \mathcal{M}],
\end{align*}
where $ \pi $ is the restriction of the natural projection and $ \varphi_m \pi $  corresponds to the map $ 0 \mapsto [K \oplus X] $. 
Then
\begin{align*}
    &[[E_m^1/\mathrm{Stab}_{G_m \ltimes E_m^0}(v)] \xrightarrow{\varphi_m} \mathcal{M}] \\
    =& [[\mathrm{Ext}^1_{\mathcal{C}_2(\mathcal{A})}(K,X) /(\mathrm{Aut}_{\mathcal{C}_2(\mathcal{A})}(K) \times \mathrm{Aut}_{\mathcal{C}_2(\mathcal{A})}(X)) \ltimes \mathrm{Hom}_{\mathcal{C}_2(\mathcal{A})}(K, X)] \xrightarrow{\varphi_m} \mathcal{M}] \\
    =& \Upsilon(\mathrm{Ext}^1_{\mathcal{C}_2(\mathcal{A})}(K,X))\\
    &[[\mathrm{Ext}^1_{\mathcal{C}_2(\mathcal{A})}(K,X)_{K \oplus X}/(\mathrm{Aut}_{\mathcal{C}_2(\mathcal{A})}(K) \times \mathrm{Aut}_{\mathcal{C}_2(\mathcal{A})}(X)) \ltimes \mathrm{Hom}_{\mathcal{C}_2(\mathcal{A})}(K, X)] \xrightarrow{\varphi_m'} \mathcal{M}] \\
    =& \Upsilon(\mathrm{Ext}^1_{\mathcal{C}_2(\mathcal{A})}(K,X))\Upsilon(\mathrm{Hom}_{\mathcal{C}_2(\mathcal{A})}(K, X))^{-1}[[(E_m^1)'/\mathrm{Stab}_{G_m}(v)] \xrightarrow{\varphi_m'} \mathcal{M}] \\
    =&  \Upsilon(\mathrm{Ext}^1_{\mathcal{C}_2(\mathcal{A})}(K,X))\Upsilon(\mathrm{Hom}_{\mathcal{C}_2(\mathcal{A})}(K, X))^{-1}\Upsilon(\mathrm{Stab}_{G_m}(v))^{-1}[(E_m^1)' \xrightarrow{\varphi_m'\pi'} \mathcal{M}],
\end{align*}
where $ \varphi_m' $ is the restriction map of $ \varphi_m $, $ (E_m^1)' \cong \mathrm{Ext}^1_{\mathcal{C}_2(\mathcal{A})}(K,X)_{K \oplus X} =\{0\} $, $ \pi': (E_m^1)' \to[(E_m^1)'/\mathrm{Stab}_{G_m}(v)] $ is the natural projection, and $ \varphi_m'\pi' $ corresponds to the map $ 0 \mapsto [K \oplus X] $.

Let $\begin{tikzcd} X = (X^1 \arrow[r,"d^1"] & X^0) \arrow[l, shift left,"d^0"] \end{tikzcd}$, it follows from the proof of \cite[Proposition 3.12]{LP21} and \cite[Proposition 2.4, Corollary 2.5]{LP21} that
    \begin{align*}
    &\langle \widehat{K^1},\widehat{X^1} \rangle + \langle \widehat{K^0},\widehat{X^0} \rangle - 2 \langle K,X \rangle \\
    =& \langle \widehat{K^1},\widehat{X^1} \rangle + \langle \widehat{K^0},\widehat{X^0} \rangle - 2 \langle K_{\mathrm{Im}\;k^1} \oplus K_{\mathrm{Im}\;k^0}^*,C_{H^0(X)} \oplus C_{H^1(X)}^* \oplus K_{\mathrm{Im}\;d^1} \oplus K_{\mathrm{Im}\;d^0}^* \rangle \\
    =& \langle \widehat{K^1} - 2 \widehat{\mathrm{Im}\;k^1},\widehat{X^1} \rangle + \langle \widehat{K^0} - 2 \widehat{\mathrm{Im}\;k^0},\widehat{X^0} \rangle.
\end{align*}
Thus, using $ \Upsilon(\mathbb{C}^n) = t^{2n} $ and taking the twist into consideration, we have
\begin{align*}
    &\delta_{[K]} * [\mathcal{F} \xrightarrow{f} \mathcal{M}_{\underline{\beta}} \hookrightarrow \mathcal{M} ] \\
    =& \sum_{m \in M}\sum_{n \in N_m}c_{mn}(-t)^{\langle \widehat{K^1} - 2 \widehat{\mathrm{Im}\;k^1},\beta^1 \rangle + \langle \widehat{K^0} - 2 \widehat{\mathrm{Im}\;k^0},\beta^0 \rangle}\Upsilon(G_m)^{-1} [(W_{mn} \times E_m^1)' \xrightarrow{\varphi_m' \Tilde{\tau}_{mn} \pi_{mn}'} \mathcal{M}] ,
\end{align*}
where $ (W_{mn} \times E_m^1)' \subseteq W_{mn} \times E_m^1 $ is a constructible subset consisting of $ (w,\xi) $ such that if $ \tau_{mn} w = v $ projects to $ ([K],[X]) $, then $ Z_{\xi} \cong K \oplus X $, $ \pi_{mn}' $ is the restriction map of the natural projection $ \pi_{mn}:  W_{mn} \times E_m^1 \to [W_{mn} \times E_m^1/G_m \ltimes E_m^0]  $ and $ \varphi_m' \Tilde{\tau}_{mn} \pi_{mn}' $ corresponds to the map $ (w,0) \mapsto [K \oplus X] $.

For the calculation of $ [\mathcal{F} \xrightarrow{f} \mathcal{M}_{\underline{\beta}} \hookrightarrow \mathcal{M} ] * \delta_{[K]}  $, note that $  [\mathrm{Spec}\;\mathbb{C} \times \mathcal{F} \xrightarrow{i_{[K]} \times f } \mathcal{M}] = [\mathcal{F} \times \mathrm{Spec}\;\mathbb{C} \xrightarrow{f \times i_{[K] }} \mathcal{M}  ] $ via the natural isomorphism $ \mathrm{Spec}\;\mathbb{C} \times \mathcal{F} \cong \mathcal{F} \times \mathrm{Spec}\;\mathbb{C} $, then the only difference is the power of $ (-t) $. 
By a similar computation we have
\begin{align*}
    &[\mathcal{F} \xrightarrow{f} \mathcal{M}_{\underline{\beta}} \hookrightarrow \mathcal{M} ] * \delta_{[K]} \\
    =& \sum_{m \in M}\sum_{n \in N_m}c_{mn}(-t)^{\langle \beta^1,\widehat{K^1} - 2 \widehat{\mathrm{Im}\;k^0} \rangle + \langle \beta^0, \widehat{K^0} - 2 \widehat{\mathrm{Im}\;k^1}\rangle}\Upsilon(G_m)^{-1} [(W_{mn} \times E_m^1)' \xrightarrow{\varphi_m' \Tilde{\tau}_{mn} \pi_{mn}'} \mathcal{M}] ,
\end{align*}
which finishes the proof.

\end{proof}

In particular, we have the following.

\begin{corollary}
    \label{KACB}
    For any $ A,B \in \mathcal{A} $, in $ \mathcal{H}_t^{\mathrm{tw}}(\mathcal{C}_2(\mathcal{A}))/(I_2(\mathcal{A})+J_2(\mathcal{A})) $, we have
    \begin{align*}
        \delta_{[K_A]} * \delta_{[C_B]} &= (-t)^{(\widehat{A},\widehat{B})}  \delta_{[C_B]} * \delta_{[K_A]},\quad \delta_{[K_A^*]} * \delta_{[C_B]} = (-t)^{-(\widehat{A},\widehat{B})}  \delta_{[C_B]} * \delta_{[K_A^*]},\\
        \delta_{[K_A]} * \delta_{[C_B^*]} &= (-t)^{-(\widehat{A},\widehat{B})} \delta_{[C_B^*]} * \delta_{[K_A]},\quad \delta_{[K_A^*]} * \delta_{[C_B^*]} = (-t)^{(\widehat{A},\widehat{B})}  \delta_{[C_B^*]} * \delta_{[K_A^*]},
    \end{align*}
    where $ (\cdot, \cdot):K(\mathcal{A}) \times K(\mathcal{A}) \to \mathbb{Z} $ is the symmetric Euler form given by $ (\widehat{M},\widehat{N}) = \langle \widehat{M}, \widehat{N} \rangle + \langle \widehat{N}, \widehat{M} \rangle $.

\end{corollary}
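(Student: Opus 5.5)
The plan is to specialize Proposition \ref{KX=XK} four times. In each case $K$ will be $K_A$ or $K_A^*$, and $[\mathcal{F}\xrightarrow{f}\mathcal{M}_{\underline{\beta}}\hookrightarrow\mathcal{M}]$ will be the point class $\delta_{[C_B]}$ or $\delta_{[C_B^*]}$, i.e.\ $\mathcal{F}=\mathrm{Spec}\;\mathbb{C}$ with $f=i_{[C_B]}$ or $i_{[C_B^*]}$. Each of these is an element of $\mathcal{H}_t^{\mathrm{tw}}(\mathcal{C}_2(\mathcal{A}))$ of the required form, since $i_{[X]}$ is representable. The dimension vector pairs are $\underline{\beta}=(0,\widehat{B})$ for $C_B$ and $\underline{\beta}=(\widehat{B},0)$ for $C_B^*$, identifying dimension vectors with classes in $K(\mathcal{A})$.

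First check that $K_A$ and $K_A^*$ lie in $\mathcal{C}_{2,\mathrm{ac}}(\mathcal{A})$. In $K_A$ one differential is $1_A$ and the other is $0$, so the kernel of each differential equals the image of the other; the same holds for $K_A^*$.

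Next read off the data entering the exponent:
\begin{itemize}
\item For $K_A$: $K^1=K^0=A$, $k^1=1$, $k^0=0$, so $\widehat{\mathrm{Im}\;k^1}=\widehat{A}$ and $\widehat{\mathrm{Im}\;k^0}=0$.
\item For $K_A^*$: $K^1=K^0=A$, $k^1=0$, $k^0=1$, so $\widehat{\mathrm{Im}\;k^1}=0$ and $\widehat{\mathrm{Im}\;k^0}=\widehat{A}$.
\end{itemize}

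Then substitute into the exponent of Proposition \ref{KX=XK}:
\[\langle \widehat{K^1}-2\widehat{\mathrm{Im}\;k^1},\beta^1\rangle+\langle \widehat{K^0}-2\widehat{\mathrm{Im}\;k^0},\beta^0\rangle+\langle\beta^1,2\widehat{\mathrm{Im}\;k^0}-\widehat{K^1}\rangle+\langle\beta^0,2\widehat{\mathrm{Im}\;k^1}-\widehat{K^0}\rangle.\]
\begin{itemize}
\item $(K_A,C_B)$: only the $\beta^0$ terms survive, giving $\langle\widehat A,\widehat B\rangle+\langle\widehat B,\widehat A\rangle=(\widehat A,\widehat B)$.
\item $(K_A^*,C_B)$: this gives $\langle-\widehat A,\widehat B\rangle+\langle\widehat B,-\widehat A\rangle=-(\widehat A,\widehat B)$.
\item $(K_A,C_B^*)$: only the $\beta^1$ terms survive, giving $\langle-\widehat A,\widehat B\rangle+\langle \widehat B,-\widehat A\rangle=-(\widehat A,\widehat B)$.
\item $(K_A^*,C_B^*)$: this gives $\langle\widehat A,\widehat B\rangle+\langle\widehat B,\widehat A\rangle=(\widehat A,\widehat B)$.
\end{itemize}
These are exactly the four stated identities, using bilinearity of the Euler form and the definition of the symmetric form $(\cdot,\cdot)$.

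There is no real obstacle, since everything is a direct specialization. The only points needing care are keeping the conventions straight: which component $C_B$ versus $C_B^*$ lives in (degree $0$ versus degree $1$), and which of $k^1,k^0$ is the identity in $K_A$ versus $K_A^*$. A sign slip in either would flip an exponent.
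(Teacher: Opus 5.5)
Your proposal is correct and is exactly the paper's argument: the paper presents this corollary as an immediate specialization of Proposition~\ref{KX=XK}. You do the same, with the right data: $\widehat{\mathrm{Im}\;k^1}=\widehat{A}$, $\widehat{\mathrm{Im}\;k^0}=0$ for $K_A$ (swapped for $K_A^*$), and $\underline{\beta}=(0,\widehat{B})$ or $(\widehat{B},0)$. All four exponents check out.
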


\begin{definition}
    For any $ X \in \mathcal{A} $, define the elements
\begin{align*}
    b_X = \Upsilon(\mathrm{Aut}_{\mathcal{C}_2(\mathcal{A})}(K_X)) \delta_{[K_X]},\;
    b_X^* = \Upsilon(\mathrm{Aut}_{\mathcal{C}_2(\mathcal{A})}(K_X^*)) \delta_{[K_X^*]} \in \mathcal{H}_t^{\mathrm{tw}}(\mathcal{C}_2(\mathcal{A})).
\end{align*}
\end{definition}

\begin{corollary}
    \label{bAbB}
    For any $ A,B \in \mathcal{A} $, in $ \mathcal{H}_t^{\mathrm{tw}}(\mathcal{C}_2(\mathcal{A}))/(I_2(\mathcal{A})+J_2(\mathcal{A})) $, we have
    \begin{align*}
        &b_A * b_B = b_{A \oplus B},\;b_A * b_B^* = \Upsilon(\mathrm{Aut}_{\mathcal{C}_2(\mathcal{A})}(K_A \oplus K_B^*))\delta_{[K_A \oplus K_B^*]},\\
        &[b_A,b_B] = [b_A,b_B^*] = [b_A^*,b_B^*]=0,
    \end{align*}
    where the Lie bracket is given by the commutator $ [a,b] = a * b - b * a $.
\end{corollary}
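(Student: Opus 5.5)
The plan is to handle $b_A * b_B$ first and then derive the remaining identities from it and from Proposition \ref{KX=XK}.

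\textbf{Step 1: the product $b_A * b_B$.} The twist exponent is
\[\langle \widehat{A},\widehat{B}\rangle+\langle \widehat{A},\widehat{B}\rangle = 2\langle \widehat{A},\widehat{B}\rangle,\]
so the sign is $(-t)^{2\langle \widehat A,\widehat B\rangle}=t^{2\langle \widehat A,\widehat B\rangle}$. For the untwisted product $\delta_{[K_A]}\diamond\delta_{[K_B]}$ we use Remark \ref{Motivic Riedtmann-Peng (XY)}. Every extension of $K_A$ by $K_B$ in $\mathcal{C}_2(\mathcal{A})$ has acyclic middle term with the same class, and the ideals identify it with $K_A\oplus K_B$. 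Alternatively, one can apply the relation in $I_2(\mathcal{A})$ directly, exactly as in the proof of Proposition \ref{KX=XK}. In both cases every middle term $Z_\xi$ gets replaced by $K_A\oplus K_B=K_{A\oplus B}$ after multiplying by $\Upsilon(\mathrm{Aut}(Z_\xi))$. Following the computation in that proof with $K=K_A$ and $X=K_B$, we get
\[\delta_{[K_A]}*\delta_{[K_B]} = t^{2\langle \widehat A,\widehat B\rangle}\,\Upsilon(\mathrm{Ext}^1(K_A,K_B))\,\Upsilon(\mathrm{Hom}(K_A,K_B))^{-1}\,\frac{\Upsilon(\mathrm{Aut}(K_{A\oplus B}))}{\Upsilon(\mathrm{Aut}(K_A))\Upsilon(\mathrm{Aut}(K_B))}\,\delta_{[K_{A\oplus B}]}.\]
Multiplying by $\Upsilon(\mathrm{Aut}(K_A))\Upsilon(\mathrm{Aut}(K_B))$ to pass to $b_A*b_B$, it remains to check
\[t^{2\langle \widehat A,\widehat B\rangle}\,t^{2\dim\mathrm{Ext}^1(K_A,K_B)-2\dim\mathrm{Hom}(K_A,K_B)}=1.\]
By the computation of $\langle K,X\rangle$ in the proof of Proposition \ref{KX=XK}, with $K=K_A$ (so $\mathrm{Im}\,k^1=A$ and $\mathrm{Im}\,k^0=0$) and $X=K_B$, we have $\langle K_A,K_B\rangle=\langle \widehat A,\widehat B\rangle$. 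Concretely, $\mathrm{Hom}(K_A,K_B)\cong\mathrm{Hom}_{\mathcal{A}}(A,B)$ and $\mathrm{Ext}^1(K_A,K_B)\cong\mathrm{Ext}^1_{\mathcal{A}}(A,B)$, and the exponents cancel. This gives $b_A*b_B=b_{A\oplus B}$.

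\textbf{Step 2: the product $b_A*b_B^*$.} The same argument applies with $K_B^*$ in place of $K_B$. Now the twist is $(-t)^{\langle \widehat A,\widehat B\rangle+\langle \widehat A,\widehat B\rangle}$, and we use $\langle K_A,K_B^*\rangle$, which by the same formula equals
\[\tfrac12\bigl(\langle \widehat A-2\widehat A,\widehat B\rangle+\langle \widehat A,\widehat B\rangle\bigr)\]
after tracking components. This is the step I expect to be the main obstacle: getting the exponents to cancel exactly, in particular determining $\mathrm{Hom}$ and $\mathrm{Ext}^1$ between $K_A$ and $K_B^*$. I would first compute these directly: $\mathrm{Hom}(K_A,K_B^*)\cong\mathrm{Hom}_{\mathcal A}(A,B)$ and $\mathrm{Ext}^1(K_A,K_B^*)\cong\mathrm{Ext}^1_{\mathcal A}(A,B)$, using the fact that $K_A$ behaves projectively or injectively relative to these complexes. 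Then $2\langle\widehat A,\widehat B\rangle-2\langle K_A,K_B^*\rangle=0$, and all middle terms are identified with $K_A\oplus K_B^*$, which yields the stated formula.

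\textbf{Step 3: the commutators.} Since $b_A*b_B=b_{A\oplus B}=b_{B\oplus A}=b_B*b_A$, we get $[b_A,b_B]=0$, and $[b_A^*,b_B^*]=0$ follows symmetrically. For $[b_A,b_B^*]$ I would apply Proposition \ref{KX=XK} with $K=K_A$ and $[\mathcal F\to\mathcal M]=\delta_{[K_B^*]}$, so that $\underline\beta=(\widehat B,\widehat B)$. The exponent becomes
\[\langle -\widehat A,\widehat B\rangle+\langle \widehat A,\widehat B\rangle+\langle \widehat B,-\widehat A\rangle+\langle \widehat B,\widehat A\rangle=0,\]
so $\delta_{[K_A]}$ and $\delta_{[K_B^*]}$ commute, and the scalar factors are central. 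This gives $[b_A,b_B^*]=0$.
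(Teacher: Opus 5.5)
Your proposal is correct and follows essentially the paper's route. You expand via the motivic Riedtmann--Peng formula, collapse every middle term to the direct sum using $I_2(\mathcal{A})$ (the submodule $K_B$ or $K_B^*$ being acyclic), and cancel the twist $t^{2\langle\widehat A,\widehat B\rangle}$ against $\Upsilon(\mathrm{Ext}^1)\Upsilon(\mathrm{Hom})^{-1}=t^{-2\langle K_A,\cdot\rangle}$. The paper writes this out only for $b_A*b_B$, and your Steps 2--3, including the use of Proposition \ref{KX=XK} for $[b_A,b_B^*]$, supply the remaining cases in the same way.

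There is one slip in Step 2. The expression $\tfrac12\bigl(\langle \widehat A-2\widehat A,\widehat B\rangle+\langle \widehat A,\widehat B\rangle\bigr)$ equals $0$. The identity from the proof of Proposition \ref{KX=XK} actually gives $2\langle\widehat A,\widehat B\rangle-2\langle K_A,K_B^*\rangle=0$, that is, $\langle K_A,K_B^*\rangle=\langle\widehat A,\widehat B\rangle$. This agrees with your direct computation and is exactly what the cancellation needs. You should justify $\mathrm{Ext}^1_{\mathcal{C}_2(\mathcal{A})}(K_A,X)\cong\mathrm{Ext}^1_{\mathcal{A}}(A,X^1)$ by the exact adjunction $\mathrm{Hom}_{\mathcal{C}_2(\mathcal{A})}(K_A,X)\cong\mathrm{Hom}_{\mathcal{A}}(A,X^1)$. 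Appealing to projectivity of $K_A$ does not work here, since $\mathcal{A}$ lacks enough projectives.
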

\begin{proof}
  As a special case of Proposition \ref{KX=XK}, we have
  \begin{align*}
      &\delta_{[K_A]} \diamond \delta_{[K_B]} \\
      =& [[\mathrm{Ext}^1_{\mathcal{C}_2(\mathcal{A})}(K_A,K_B)/(\mathrm{Aut}_{\mathcal{C}_2(\mathcal{A})}(K_A) \times \mathrm{Aut}_{\mathcal{C}_2(\mathcal{A})}(K_B)) \ltimes \mathrm{Hom}_{\mathcal{C}_2(\mathcal{A})}(K_A, K_B)] \xrightarrow{\varphi_m} \mathcal{M}] \\
      =& t^{-2\mathrm{dim}_{\mathbb{C}}\mathrm{Hom}_{\mathcal{A}}(A,B)} \Upsilon(\mathrm{Aut}_{\mathcal{C}_2(\mathcal{A})}(K_A))^{-1}\Upsilon(\mathrm{Aut}_{\mathcal{C}_2(\mathcal{A})}(K_B))^{-1}[\mathrm{Ext}^1_{\mathcal{C}_2(\mathcal{A})}(K_A,K_B) \xrightarrow{\varphi_m \pi} \mathcal{M}]  \\
      =& t^{-2\langle \widehat{A}, \widehat{B} \rangle}\Upsilon(\mathrm{Aut}_{\mathcal{C}_2(\mathcal{A})}(K_A))^{-1}\Upsilon(\mathrm{Aut}_{\mathcal{C}_2(\mathcal{A})}(K_B))^{-1} [\mathrm{Ext}^1_{\mathcal{C}_2(\mathcal{A})}(K_A,K_B)_{K_A \oplus K_B} \xrightarrow{\varphi_m' \pi} \mathcal{M}], 
  \end{align*}
  where $ \varphi_m' \pi $ corresponds to the map $ 0 \mapsto [K_A \oplus K_B]  $. 
  Then
  \begin{align*}
      b_A * b_B = t^{2\langle \widehat{A}, \widehat{B} \rangle-2\langle \widehat{A}, \widehat{B} \rangle}\Upsilon(\mathrm{Aut}_{\mathcal{C}_2(\mathcal{A})}(K_A \oplus K_B))[\mathrm{Spec}\; \mathbb{C} \xrightarrow{i_{[K_A \oplus K_B]}} \mathcal{M}] = b_{ A\oplus B}.
  \end{align*}
\end{proof}

Define a subset
\begin{align*}
    S_2(\mathcal{A}) = \{a\delta_{[K]} \in \mathcal{H}_t^{\mathrm{tw}}(\mathcal{C}_2(\mathcal{A}))/(I_2(\mathcal{A})+J_2(\mathcal{A}))\mid a \in \mathbb{C}(t)^{\times}, K \in \mathcal{C}_{2,\mathrm{ac}}(\mathcal{A})\},
\end{align*}
which is multiplicatively closed with the identity $ \delta_{[0]} = [\mathcal{M}_0 \hookrightarrow \mathcal{M} ] $. 
By Proposition \ref{KX=XK}, we obtain the following.

\begin{proposition}
    $ S_2(\mathcal{A}) $  satisfies the left and right Ore conditions and is left and right reversible. 
    In particular, the right localization of $ \mathcal{H}_t^{\mathrm{tw}}(\mathcal{C}_2(\mathcal{A}))/(I_2(\mathcal{A})+J_2(\mathcal{A}))$ with respect to $ S_2(\mathcal{A}) $ exists.
\end{proposition}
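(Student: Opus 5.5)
The plan is to derive everything from the quasi-commutation formula in Proposition \ref{KX=XK}. That formula says that for every acyclic $K$ and every homogeneous element $x = [\mathcal{F} \xrightarrow{f} \mathcal{M}_{\underline{\beta}} \hookrightarrow \mathcal{M}]$ we have
\[
\delta_{[K]} * x = (-t)^{n(K,\underline{\beta})}\, x * \delta_{[K]},
\]
where the integer $n(K,\underline{\beta})$ is the exponent written there. It depends only on $\widehat{K^1}$, $\widehat{K^0}$, $\widehat{\mathrm{Im}\;k^1}$, $\widehat{\mathrm{Im}\;k^0}$ and $\underline{\beta}$, and $(-t)^{n}$ is a unit of $\mathbb{C}(t)$.

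Every element of the quotient algebra $\mathcal{H}_t^{\mathrm{tw}}(\mathcal{C}_2(\mathcal{A}))/(I_2(\mathcal{A})+J_2(\mathcal{A}))$ is a finite sum of homogeneous pieces. The ideals $I_2, J_2$ are generated by homogeneous differences: in a short exact sequence $L$ and $K\oplus X$ have the same dimension vector pair. Hence the quotient is still $\mathbb{N}\mathbb{I}\times\mathbb{N}\mathbb{I}$-graded, and the relation above holds componentwise.

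\textbf{Right Ore condition.} Given $s = a\delta_{[K]} \in S_2(\mathcal{A})$ and an element $r = \sum_{\underline{\beta}} r_{\underline{\beta}}$, we need $s' \in S_2(\mathcal{A})$ and $r'$ with $r s' = s r'$. Take $s' = s$ and $r' = \sum_{\underline{\beta}} (-t)^{-n(K,\underline{\beta})} r_{\underline{\beta}}$. Then
\[
s r' = \sum_{\underline{\beta}} a(-t)^{-n(K,\underline{\beta})}\, \delta_{[K]} * r_{\underline{\beta}} = \sum_{\underline{\beta}} a\, r_{\underline{\beta}} * \delta_{[K]} = r s.
\]
The left Ore condition is the symmetric computation, with the powers of $(-t)$ placed on the other side.

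\textbf{Reversibility.} Suppose $s r = 0$ with $s = a\delta_{[K]}$. By the relation, $0 = s r = \sum_{\underline{\beta}} (-t)^{n(K,\underline{\beta})} r_{\underline{\beta}} * s$. The rescaled element $r'' = \sum_{\underline{\beta}} (-t)^{n(K,\underline{\beta})} r_{\underline{\beta}}$ then satisfies $r'' s = 0$. Because $(-t)^{n}$ are units and the grading is preserved, $r''$ is merely a graded rescaling of $r$, with inverse rescaling by $(-t)^{-n(K,\underline{\beta})}$ in each component. This is not yet the required $r s = 0$, so I use a cleaner route: show that each $\delta_{[K]}$ is a non-zero-divisor. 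If so, $s r = 0$ forces $r = 0$, and reversibility holds trivially, with $r s = 0$ as well.

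To get this, I pick an acyclic $K'$ with $\delta_{[K']} * \delta_{[K]}$ equal to a nonzero scalar multiple of $\delta_{[K\oplus K']}$. I expect $K \oplus K'$ to be ``trivial'' in the quotient by $I_2+J_2$ modulo scalars. Concretely, every acyclic complex has a filtration by $K_A$'s and $K_B^*$'s (the decomposition $K \simeq K_{\mathrm{Im}\;k^1}\oplus K^*_{\mathrm{Im}\;k^0}$ used in the proof of Proposition \ref{KX=XK}, via \cite{LP21}). So modulo $I_2$ each $\delta_{[K]}$ is a unit scalar times $b_A * b_B^*$, by Corollary \ref{bAbB}.

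Multiplication by $b_A$ is injective if the map $x\mapsto b_A * x$ has a left inverse on the subspace spanned by classes $\delta_{[K_A\oplus X]}$. The relations $I_2, J_2$ are exactly what reduce every product $\delta_{[K_A]}*\delta_{[X]}$ to $\delta_{[K_A\oplus X]}$ up to scalars. Establishing this cancellation, i.e.\ that $X\mapsto K_A\oplus X$ induces an injective map on the quotient, is the main obstacle. I would try to do it by constructing a $\mathbb{C}(t)$-basis of the quotient indexed by pairs (acyclic part, minimal complex), in the spirit of \cite[Section 3]{LP21}. If that fails, I would verify reversibility directly, by mirroring the right-Ore rescaling argument on the annihilating element.

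Once the Ore and reversibility conditions hold, the existence of the right localization follows from the standard Ore localization theorem for rings.
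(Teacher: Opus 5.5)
Your treatment of the Ore conditions is correct and matches the paper, whose proof is just the remark that the statement follows from Proposition \ref{KX=XK}. Taking $s'=s$ and rescaling $r$ componentwise by the units $(-t)^{\mp n(K,\underline{\beta})}$ is the content of that remark.

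The gap is in reversibility. You arrive at
\[
0 = s * r = \sum_{\underline{\beta}} (-t)^{n(K,\underline{\beta})}\, r_{\underline{\beta}} * s
\]
and then say this ``is not yet the required $r*s=0$''. It already is, once you use the grading on the product side:
\begin{itemize}
\item $s=a\delta_{[K]}$ is homogeneous of degree $\underline{\kappa}$, the dimension vector pair of $K$.
\item So $r_{\underline{\beta}} * s$ lies in degree $\underline{\beta}+\underline{\kappa}$, and these degrees are distinct for distinct $\underline{\beta}$.
\item You already observed that $I_2(\mathcal{A})+J_2(\mathcal{A})$ is generated by homogeneous elements, so the quotient is graded. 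Hence each summand vanishes separately.
\end{itemize}
Thus $r_{\underline{\beta}} * s=0$ for every $\underline{\beta}$, so $r*s=0$, and $s'=s$ witnesses right reversibility. The mirror argument, starting from $r*s=0$, gives left reversibility. This one-line step is all that is missing.

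Instead, you switch to the claim that every $\delta_{[K]}$ is a non-zero-divisor in $\mathcal{H}_t^{\mathrm{tw}}(\mathcal{C}_2(\mathcal{A}))/(I_2(\mathcal{A})+J_2(\mathcal{A}))$. You yourself call this the main obstacle and do not prove it. The claim is far stronger than needed, and your sketch does not establish it, for two reasons:
\begin{itemize}
\item It would need something like an explicit $\mathbb{C}(t)$-basis of the quotient before localization. Nothing of the sort is available; the paper only gets structural control after localizing and reducing, in Proposition \ref{Triangular decomposition of SDH_t}.
\item In the motivic setting a general element is a class $[\mathcal{F}\xrightarrow{f}\mathcal{M}]$ of a stack, not a combination of point classes $\delta_{[K_A\oplus X]}$. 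So a left inverse for multiplication by $b_A$ on the span of such point classes would not control the whole algebra.
\end{itemize}
As written, reversibility is left unproved, and it is the second hypothesis of the Ore localization theorem, so existence of the right localization does not yet follow. Replace the detour by the grading argument above.
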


\begin{definition}
    Define the $ \mathbb{C}(t) $-algebra
    \begin{align*}
        \mathcal{SDH}_t(\mathcal{A}) = \mathcal{H}_t^{\mathrm{tw}}(\mathcal{C}_2(\mathcal{A}))/(I_2(\mathcal{A})+J_2(\mathcal{A}))[S_2(\mathcal{A})^{-1}].
    \end{align*}
\end{definition}

\begin{remark}\;
   \label{q = t^2}

    \begin{enumerate}
        \item Here we use a left-right symmetric construction as in \cite[Section 4.2]{CH24} for simplicity. 
    Indeed, by a similar argument as in \cite[Lemma 4.7]{CH24}, we can show that this so-called motivic semi-derived Ringel-Hall algebra will coincide with the one defined in the way given by \cite{LP21}, up to isomorphisms of $ \mathbb{C}(t) $-algebras. 
     \item We also note that the computations in $ \mathcal{SDH}_t^{\mathrm{red}}(\mathcal{A}) $ are sometimes formally analogous to those in the semi-derived Ringel-Hall algebra over a finite field $ \mathbb{F}_q $: finite counts of the form $ q^k $ can be replaced by virtual Poincaré polynomials of constructible sets of the form $ \mathbb{C}^k $, which is just $ t^{2k} $, see \cite[Example 5.21]{Joyce07}.
    \end{enumerate}

\end{remark}

For any $ \alpha \in K(\mathcal{A}) $, there exist $ A,B \in \mathcal{A} $ such that $ \alpha = \widehat{A} - \widehat{B} $, then by Corollary \ref{bAbB}, there are well-defined elements
\begin{align*}
    b_{\alpha} = b_A * b_B^{-1}, \; b_{\alpha}^* = b_A^* * b_B^{*-1} \in \mathcal{SDH}_t(\mathcal{A}).
\end{align*}

\begin{definition}
    Define the reduced quotient of $ \mathcal{SDH}_t(\mathcal{A}) $ to be the $ \mathbb{C}(t) $-algebra
    \begin{align*}
        \mathcal{SDH}_t^{\mathrm{red}}(\mathcal{A}) = \mathcal{SDH}_t(\mathcal{A})/\langle b_{\alpha} * b_{\alpha}^* -1 \mid \alpha \in K(\mathcal{A})\rangle,
    \end{align*}
    with induced multiplication still denoted by $ * $ and we use the same symbols in $ \mathcal{SDH}_t(\mathcal{A})  $ and $ \mathcal{SDH}_t^{\mathrm{red}}(\mathcal{A})  $.
\end{definition}

\begin{corollary}
    \label{balphaalpha'}
    For any $ \alpha,\alpha' \in K(\mathcal{A}) $ and $ [\mathcal{F} \xrightarrow{f} \mathcal{M}_{\underline{\beta}} \hookrightarrow \mathcal{M} ] \in \mathcal{SDH}_t^{\mathrm{red}}(\mathcal{A}) $, we have
    \begin{align*}
        &b_{\alpha} * b_{\alpha'} = b_{\alpha + \alpha'},\; [b_{\alpha},b_{\alpha'}] = 0,\\
         &b_{\alpha} * [\mathcal{F} \xrightarrow{f} \mathcal{M}_{\underline{\beta}} \hookrightarrow \mathcal{M} ] = (-t)^{(\alpha,\beta^0-\beta^1)}[\mathcal{F} \xrightarrow{f} \mathcal{M}_{\underline{\beta}} \hookrightarrow \mathcal{M} ] * b_{\alpha}.
    \end{align*}
\end{corollary}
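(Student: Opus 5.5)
The plan is to reduce everything to the case of genuine acyclic complexes $K_A$, $K_A^*$ and then extend multiplicatively.

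\textbf{Step 1: $b_\alpha * b_{\alpha'} = b_{\alpha+\alpha'}$ and commutativity.} Write $\alpha=\widehat{A}-\widehat{B}$ and $\alpha'=\widehat{A'}-\widehat{B'}$. By Corollary \ref{bAbB}, the elements $b_A$, $b_B$, $b_{A'}$, $b_{B'}$ pairwise commute and satisfy $b_A*b_{A'}=b_{A\oplus A'}$. Since they commute, their inverses in the localization commute with them as well. Hence
\[
b_\alpha*b_{\alpha'} = b_A*b_{A'}*b_B^{-1}*b_{B'}^{-1} = b_{A\oplus A'}*(b_{B\oplus B'})^{-1} = b_{\alpha+\alpha'}.
\]
This product is symmetric in $\alpha$ and $\alpha'$, so $[b_\alpha,b_{\alpha'}]=0$.

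Well-definedness of $b_\alpha$ is already asserted in the paper. If needed, it follows from the same additivity: if $\widehat{A}-\widehat{B}=\widehat{A'}-\widehat{B'}$, then $b_A*b_{B'}$ and $b_{A'}*b_B$ are both $b$ of objects with equal classes. This would also need $b_X$ to depend only on $\widehat X$ modulo the ideals. That holds because any filtration of $X$ gives a short exact sequence of $K$-complexes with acyclic ends, and the relations in $I_2(\mathcal{A})$ then identify the classes.

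\textbf{Step 2: commutation with $[\mathcal{F}\to\mathcal{M}_{\underline\beta}]$.} Apply Proposition \ref{KX=XK} with $K=K_A$. Here $K^1=K^0=A$, $k^1=1$ and $k^0=0$, so $\mathrm{Im}\,k^1=A$ and $\mathrm{Im}\,k^0=0$. The exponent becomes
\[
\langle \widehat A-2\widehat A,\beta^1\rangle+\langle \widehat A,\beta^0\rangle+\langle\beta^1,-\widehat A\rangle+\langle\beta^0,2\widehat A-\widehat A\rangle = (\widehat A,\beta^0-\beta^1).
\]
Multiplying by the scalar $\Upsilon(\mathrm{Aut}(K_A))$ gives
\[
b_A*x=(-t)^{(\widehat A,\beta^0-\beta^1)}\,x*b_A .
\]
Conjugating by $b_B^{-1}$ in the localization gives $b_B^{-1}*x=(-t)^{-(\widehat B,\beta^0-\beta^1)}\,x*b_B^{-1}$. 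Combining the two relations gives the exponent $(\alpha,\beta^0-\beta^1)$, using bilinearity of the symmetric form.

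Finally, everything descends to the reduced quotient, since it is a quotient algebra. The only care point is the sign and exponent bookkeeping in Proposition \ref{KX=XK}, which the computation above settles.
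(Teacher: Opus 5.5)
Your proposal is correct and follows the same route the paper implicitly takes, since the paper states this corollary without proof right after Corollary~\ref{bAbB}. The first two identities come from Corollary~\ref{bAbB}, and the commutation rule comes from Proposition~\ref{KX=XK} with $K=K_A$, extended to $b_B^{-1}$ by inverting. Your exponent computation $(\widehat A,\beta^0-\beta^1)$ is right, and your remark that $b_X$ depends only on $\widehat X$ (via the $I_2(\mathcal{A})$ relations for $0\to K_{X'}\to K_X\to K_{X''}\to 0$) supplies a detail the paper leaves implicit.
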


\subsection{Triangular decomposition}\;

First, we recall two well-known facts and reformulate them in the case of $ \mathcal{C}_2(\mathcal{A}) $.

\begin{lemma}[{\itshape\cite[Lemma 6.9]{FLX24}}]\;
    \label{Upsilon(Aut)}
    
    Let $ X_1, \cdots, X_m \in \mathcal{C}_2(\mathcal{A}) $ be indecomposable complexes with $ n $ many distinct isomorphism classes of sizes $m_1,\cdots,m_n$ such that $m_1+\cdots+m_n=m$. 
    Set $ X =  X_1 \oplus \cdots \oplus X_m $, then
    \begin{align*}
        \Upsilon(\mathrm{Aut}_{\mathcal{C}_2(\mathcal{A})}(X)) &= t^{2 \mathrm{dim}\; \mathrm{rad}\;\mathrm{End}_{\mathcal{C}_2(\mathcal{A})}(X)}\prod_{i=1}^n (t^{m_i(m_i-1)}\prod^{m_i}_{k=1}(t^{2k}-1)) \\
        &= t^{2l_{(X_1,\cdots,X_m)}} \prod_{k=1}^m \Upsilon(\mathrm{Aut}_{\mathcal{C}_2(\mathcal{A})}(X_k)) \prod_{i=1}^n\frac{t^{m_i(m_i-1)}\prod^{m_i}_{k=1}(t^{2k}-1)}{(t^2-1)^{m_i}},
    \end{align*}
    where $ l_{(X_1,\cdots,X_m)} = \mathrm{dim}\;\mathrm{rad}\;\mathrm{End_{\mathcal{C}_2(\mathcal{A})}}(X)-\sum_{i=1}^m\mathrm{dim}\;\mathrm{rad}\;\mathrm{End_{\mathcal{C}_2(\mathcal{A})}}(X_i) $.
    
    In particular, if $ m \geqslant m' $ for some positive integer $ m' $, then $ (t^2-1)^{m'} \mid \Upsilon(\mathrm{Aut}_{\mathcal{C}_2(\mathcal{A})}(X)) $.
    \end{lemma}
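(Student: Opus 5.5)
The plan is to compute $\Upsilon$ of the automorphism group from the structure of the finite-dimensional algebra $E=\mathrm{End}_{\mathcal{C}_2(\mathcal{A})}(X)$. Since $\mathcal{C}_2(\mathcal{A})$ is a Hom-finite Krull--Schmidt $\mathbb{C}$-linear category, each $\mathrm{End}(X_k)$ is local with residue field $\mathbb{C}$ ($\mathbb{C}$ being algebraically closed). Group the summands into isotypic blocks $Y_i^{\oplus m_i}$, $i=1,\dots,n$, with $Y_i$ pairwise non-isomorphic. Then
\[ E/\mathrm{rad}\,E \cong \prod_{i=1}^n M_{m_i}(\mathbb{C}), \]
and $\mathrm{Aut}(X)=E^{\times}$ is the preimage of $\prod_i \mathrm{GL}_{m_i}(\mathbb{C})$ under $E\to E/\mathrm{rad}\,E$. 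Consequently $\mathrm{Aut}(X)\cong (1+\mathrm{rad}\,E)\rtimes \prod_i\mathrm{GL}_{m_i}(\mathbb{C})$. Concretely, the projection $E^{\times}\to\prod_i\mathrm{GL}_{m_i}$ is a surjective morphism of varieties whose fibres are translates of the affine space $\mathrm{rad}\,E$, and it is Zariski locally trivial (indeed trivial) because it admits a section. This section is obtained by choosing block-scalar lifts, which is possible since the summands within a block are identical copies of $Y_i$.

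From this, the motivic relations in Definition \ref{Scissor relation} (Zariski locally trivial fibrations and multiplicativity) give
\[ \Upsilon(\mathrm{Aut}(X))=\Upsilon(\mathbb{C}^{\dim\mathrm{rad}\,E})\prod_i\Upsilon(\mathrm{GL}_{m_i}(\mathbb{C})). \]
Use $\Upsilon(\mathbb{C}^k)=t^{2k}$ together with the standard computation
\[ \Upsilon(\mathrm{GL}_m)=\prod_{k=0}^{m-1}(t^{2m}-t^{2k})=t^{m(m-1)}\prod_{k=1}^m(t^{2k}-1). \]
This formula is obtained by stratifying $\mathrm{GL}_m$ by successive choices of columns, each step being a Zariski locally trivial fibration with fibre $\mathbb{C}^m\setminus\mathbb{C}^k$. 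Together these give the first equality.

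For the second equality, apply the first formula to each indecomposable $X_k$. In that case $m=1$, so $\Upsilon(\mathrm{Aut}(X_k))=t^{2\dim\mathrm{rad}\,\mathrm{End}(X_k)}(t^2-1)$. Taking the product over $k$ and rearranging, the correction exponent is exactly $l_{(X_1,\dots,X_m)}$, and the ratio of the block factors is $\prod_i t^{m_i(m_i-1)}\prod_k(t^{2k}-1)/(t^2-1)^{m_i}$.

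For the final divisibility claim, each factor $t^{2k}-1$ is divisible by $t^2-1$. Hence $\prod_i\prod_{k\le m_i}(t^{2k}-1)$ is divisible by $(t^2-1)^{\sum m_i}=(t^2-1)^m$, and therefore by $(t^2-1)^{m'}$ whenever $m\ge m'$.

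The main obstacle I expect is justifying rigorously that $E^{\times}\to\prod\mathrm{GL}_{m_i}$ is a Zariski locally trivial fibration, so that $\Upsilon$ is multiplicative, and identifying $E/\mathrm{rad}\,E$ correctly. The first point is handled by the explicit section described above. The second relies on the residue fields being $\mathbb{C}$ and on Hom spaces between non-isomorphic indecomposables lying in the radical.
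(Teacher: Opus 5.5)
Your proof is correct. The paper gives no argument of its own and simply cites \cite[Lemma 6.9]{FLX24}; your route is the standard one behind that citation: split $E\to E/\mathrm{rad}\,E\cong\prod_i M_{m_i}(\mathbb{C})$ by block-scalar matrices to get $\mathrm{Aut}(X)\cong\mathrm{rad}\,E\times\prod_i\mathrm{GL}_{m_i}(\mathbb{C})$ as varieties, then apply the column count $\Upsilon(\mathrm{GL}_m)=t^{m(m-1)}\prod_{k=1}^m(t^{2k}-1)$ (the same count the paper uses via \cite[Lemma 2.6]{Bridgeland12} in Lemma \ref{eikfil}).
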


The following lemma can be proved analogously to \cite[Lemma 5.2]{DXX10} by showing the existence of a free $ \mathbb{C}^* $-action.

\begin{lemma}
    \label{Ext^1(X,Y)Z}
     For any $ X,Y,Z \in \mathcal{C}_2(\mathcal{A}) $ such that $ \mathrm{Ext}^1_{\mathcal{C}_2(\mathcal{A})}(X, Y)_{Z} \neq \emptyset $, if $ Z \not\cong  X \oplus Y  $, then
    \begin{align*}
     (t^2-1) \mid \Upsilon(\mathrm{Ext}^1_{\mathcal{C}_2(\mathcal{A})}(X,Y)_{Z}).
    \end{align*}
\end{lemma}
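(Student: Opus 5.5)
We follow the strategy of \cite[Lemma 5.2]{DXX10}: produce a free action of $\mathbb{C}^*$ on the constructible set $\mathrm{Ext}^1_{\mathcal{C}_2(\mathcal{A})}(X,Y)_Z$. Then we use that the virtual Poincaré polynomial of a set with a free $\mathbb{C}^*$-action is divisible by $\Upsilon(\mathbb{C}^*)=t^2-1$.

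The natural candidate is scalar multiplication $\lambda\cdot\xi = \lambda\xi$ on the vector space $\mathrm{Ext}^1_{\mathcal{C}_2(\mathcal{A})}(X,Y)$.

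\begin{enumerate}
\item \emph{The action preserves $\mathrm{Ext}^1_{\mathcal{C}_2(\mathcal{A})}(X,Y)_Z$.} For $\lambda\in\mathbb{C}^*$, the class $\lambda\xi$ is obtained from $\xi$ by pulling back along the automorphism $\lambda^{-1}\cdot 1_X$ of $X$ (equivalently, by rescaling the first map of the sequence by $\lambda$). The middle term of the resulting short exact sequence is isomorphic to $Z_\xi$. So the isoclass of the middle term is constant along orbits.

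\item \emph{The action is free.} If $\xi\neq 0$ and $\lambda\xi=\xi$, then $(\lambda-1)\xi=0$, which forces $\lambda=1$. The hypothesis $Z\not\cong X\oplus Y$ ensures that $0\notin \mathrm{Ext}^1_{\mathcal{C}_2(\mathcal{A})}(X,Y)_Z$, since the split extension has middle term $X\oplus Y$. Hence the action on $\mathrm{Ext}^1_{\mathcal{C}_2(\mathcal{A})}(X,Y)_Z$ is free.

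\item \emph{Passing to the quotient.} The set $\mathrm{Ext}^1_{\mathcal{C}_2(\mathcal{A})}(X,Y)_Z$ is a constructible, $\mathbb{C}^*$-stable subset of $\mathrm{Ext}^1_{\mathcal{C}_2(\mathcal{A})}(X,Y)\setminus\{0\}$. The projection $\mathrm{Ext}^1_{\mathcal{C}_2(\mathcal{A})}(X,Y)\setminus\{0\}\to\mathbb{P}(\mathrm{Ext}^1_{\mathcal{C}_2(\mathcal{A})}(X,Y))$ is a Zariski locally trivial $\mathbb{C}^*$-bundle. Our set is the preimage of its image $P_Z$, which is constructible by Chevalley's theorem.

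\item \emph{Computing $\Upsilon$.} Stratify $P_Z$ into locally closed pieces over which the bundle is trivial. Relations (1) and (2) of Definition~\ref{Scissor relation}, together with multiplicativity of $\Upsilon$, then give
\[\Upsilon(\mathrm{Ext}^1_{\mathcal{C}_2(\mathcal{A})}(X,Y)_Z)=\Upsilon(P_Z)\,\Upsilon(\mathbb{C}^*)=\Upsilon(P_Z)(t^2-1).\]
\end{enumerate}

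The remaining point is that $\Upsilon(P_Z)$ is a polynomial, so that the equality above genuinely gives divisibility. This holds because $P_Z$ is a constructible set in a variety, and $\Upsilon$ of any variety lies in $\mathbb{Z}[t]$: it is the virtual Poincaré polynomial, additive over stratifications, with values in $\mathbb{Z}[t]$. Hence $(t^2-1)$ divides $\Upsilon(\mathrm{Ext}^1_{\mathcal{C}_2(\mathcal{A})}(X,Y)_Z)$ in $\mathbb{Z}[t]$.

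The main obstacle is Step 1, making the invariance of the middle term under rescaling precise in $\mathcal{C}_2(\mathcal{A})$. It should follow from the functoriality of $\mathrm{Ext}^1$ in the first argument. If one works with Yoneda representatives, one can instead exhibit the explicit isomorphism of sequences $0\to Y\xrightarrow{\lambda a}Z\xrightarrow{b}X\to 0$ versus $0\to Y\xrightarrow{a}Z\xrightarrow{b}X\to 0$, given by identity maps together with $\lambda^{-1}$ on $Y$.
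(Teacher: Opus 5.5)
Your proof is correct and follows the route the paper indicates (via \cite[Lemma 5.2]{DXX10}): the scalar $\mathbb{C}^*$-action on $\mathrm{Ext}^1_{\mathcal{C}_2(\mathcal{A})}(X,Y)_Z$ is free because $Z\not\cong X\oplus Y$ excludes $0$, and the Zariski locally trivial projection to $\mathbb{P}(\mathrm{Ext}^1_{\mathcal{C}_2(\mathcal{A})}(X,Y))$ then splits off the factor $\Upsilon(\mathbb{C}^*)=t^2-1$. One harmless slip: pulling back along $\lambda^{-1}1_X$, or rescaling the first map by $\lambda$, produces $\lambda^{-1}\xi$ rather than $\lambda\xi$, but this does not affect the invariance of the middle term.
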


Next, we introduce some notions that will be used throughout the paper. 

Let $ \mathcal{O}_i \subseteq  C_2(Q,\underline{\alpha}_i) $ be a $ G_{\underline{\alpha}_i} $-invariant constructible subset for $ i=1,2 $. 
Define
\begin{align*}
    \mathcal{O}_1 \oplus \mathcal{O}_2 &= \mathrm{Im}\;\psi^{\underline{\alpha}_1+\underline{\alpha}_2}_{\underline{\alpha}_1, \underline{\alpha}_2}, \text{ where } \psi^{\underline{\alpha}_1+\underline{\alpha}_2}_{\underline{\alpha}_1, \underline{\alpha}_2} \text{ is given by (\ref{psiaa'a''})}.
\end{align*}
Then $ \mathcal{O}_1 \oplus \mathcal{O}_2 $ is constructible and $ G_{\underline{\alpha}_1+\underline{\alpha}_2} $-invariant with the quotient stack denoted by $ [\mathcal{O}_1 \oplus \mathcal{O}_2/G_{\underline{\alpha}_1+\underline{\alpha}_2} ] $.

Denote by $ n\mathcal{O} $ the direct sum of $ n $ copies of $ \mathcal{O} $ and denote by $ G_{n\underline{\alpha}} $ the algebraic group $ G_{\underline{\alpha}+\cdots+ \underline{\alpha}} $, then $ n\mathcal{O} $ is invariant under $ G_{n\underline{\alpha}} $ by induction on $ n $. 

\begin{definition}
    Let $ \mathcal{O} \subseteq C_2(Q,\underline{\alpha}) $ be a $G_{\underline{\alpha}}$-invariant constructible subset. 
    
    $ \mathcal{O} $ is called indecomposable if all points in it correspond to indecomposable complexes. 
    
    $ \mathcal{O} $ is called of Krull-Schmidt type if $ \mathcal{O} = m_1 \mathcal{O}_1 \oplus \cdots \oplus m_n \mathcal{O}_n $
    for some indecomposable constructible sets $ \mathcal{O}_1, \cdots, \mathcal{O}_n $, which are invariant under the corresponding group actions, and $ m_1, \cdots, m_n \in \mathbb{N} $. 
       
\end{definition}

\begin{remark}
    \label{disjoint union}
    Given a constructible set $ \mathcal{O} = m_1 \mathcal{O}_1 \oplus \cdots \oplus m_n \mathcal{O}_n $ of Krull-Schmidt type, if there exist $ \mathcal{O}_i \neq \mathcal{O}_j $ such that $ \mathcal{O}_i \cap \mathcal{O}_j \neq \emptyset  $, then
\begin{align*}
    \mathcal{O}_i \oplus \mathcal{O}_j = 2 {\mathcal{I}} \bigsqcup ({\mathcal{C}}_i \oplus {\mathcal{C}}_j) \bigsqcup ({\mathcal{C}}_i \oplus {\mathcal{I}}) \bigsqcup ({\mathcal{I}} \oplus {\mathcal{C}}_j)
\end{align*}
can be written as a disjoint union, where the intersection $ \mathcal{I} = \mathcal{O}_i \cap \mathcal{O}_j$ and the corresponding complements $ {\mathcal{C}}_i=\mathcal{O}_i \backslash {\mathcal{I}} $, $ {\mathcal{C}}_j=\mathcal{O}_j \backslash {\mathcal{I}} $ are constructible. 
Proceeding in this way, for finitely many times, we can write
\begin{align*}
    \mathcal{O} = \bigsqcup_{i=1}^l (m_{i,1}\mathcal{O}_{i,1} \oplus m_{i,2}\mathcal{O}_{i,2} \oplus  \cdots \oplus m_{i,k_i}\mathcal{O}_{i,k_i})
\end{align*}
as a disjoint union of finitely many constructible subsets of Krull-Schmidt type, where $ \mathcal{O}_{i,j}\;( 1 \leqslant i \leqslant l,1 \leqslant j \leqslant k_i ) $ are indecomposable constructible subsets disjoint to each other. 
Hence, by the scissor relation in Definition \ref{Scissor relation} (1), we may always assume that $ \mathcal{O}_1, \cdots, \mathcal{O}_n $ are disjoint to each other when we consider the element in $ \mathcal{SDH}^{\mathrm{red}}_t(\mathcal{A}) $ which is supported on $ \mathcal{O} $.
\end{remark}

\begin{definition}
    \label{positive or negative}
    Let $ \mathcal{O} \subseteq C_2(Q, \underline{\alpha}) $ be a $ G_{\underline{\alpha}} $-invariant constructible subset of Krull-Schmidt type. 
    
     $ \mathcal{O} $ is called positive (resp. negative) if all points in it correspond to complexes of the form $ a_1 C_{M_1} \oplus \cdots \oplus a_n C_{M_n}  $ (resp. $  a_1 C_{M_1}^* \oplus \cdots \oplus a_n C_{M_n}^* $), where $ a_1, \cdots ,a_n \in \mathbb{N} $ and $ M_1,\cdots,M_n \in \mathcal{A} $ are indecomposable and non-isomorphic to each other.

\end{definition}

Any negative constructible set is of the form $ \mathcal{O}^* $ for some positive constructible set $ \mathcal{O} $, where 
\[\mathcal{O}^* = \{(y^0,y^1,-d^0,-d^1) \mid (y^1,y^0,d^1,d^0) \in \mathcal{O}\}.\]

Set $ \underline{\alpha}^* = (\alpha^0,\alpha^1) $ for any dimension vector pair $ \underline{\alpha} = (\alpha^1,\alpha^0) \in \mathbb{N}\mathbb{I} \times \mathbb{N}\mathbb{I}$.

Then, we give the following definition of the motivic Hall algebra of $ \mathcal{A} $ in the framework of \cite{Joyce07} as in Section \ref{Motivic Hall algebra of C2A}. 

For any $ \alpha \in \mathbb{N}\mathbb{I} $, let
\[\mathcal{R}_{\alpha} = [E_{\alpha}^{\mathrm{nil}}(Q)/G_{\alpha}(Q)] \]
be the quotient stack parameterizing the isomorphism classes of nilpotent $ \mathbb{C}Q $-modules of dimension vector $ \alpha $. 
Set
\[\mathcal{R} = \bigsqcup_{\alpha \in \mathbb{N}\mathbb{I}} \mathcal{R}_{\alpha},\]
then $ \mathcal{R} $ is an Artin stack, locally of finite type over $ \mathbb{C} $ with affine stabilizers. 

Note that there is a natural morphism of stacks
\[
    I^+:\mathcal R\to \mathcal M \text{  (resp. } I^-:\mathcal R\to \mathcal M)
\]
corresponding to the map $[M]\mapsto [C_M]\text{  (resp. } [M]\mapsto [C_M^*]$), 
then $ \mathcal{R} $ is identified with a locally closed substack of $ \mathcal{M} $ via $ I^{\pm} $. 

Let $ \mathcal{R}^{(2)} $ be the moduli stack parameterizing filtrations in $ \mathcal{A} $ and set
\[\mathcal{H}_t(\mathcal{A}) =  K_{\Upsilon}^0(\mathrm{St}/\mathcal{R}), \]
with the multiplication given by $ \Tilde{b}_* \circ (\Tilde{a}_1, \Tilde{a}_2)^* $, where $ (\Tilde{a}_1, \Tilde{a}_2) $ and $ \Tilde{b} $ are defined analogously as in Section \ref{Moduli stacks of objects and filtrations in C2A}, then $ \mathcal{H}_t(\mathcal{A}) $ is a $ \mathbb{N}\mathbb{I} $-graded $ \mathbb{C}(t) $-algebra by construction.

Let $ \mathcal{H}_t^{\mathrm{tw}}(\mathcal{A}) $ be the $ \mathbb{C}(t) $-algebra with underlying vector space the same as $ \mathcal{H}_t(\mathcal{A}) $ and multiplication twisted by $ (-t)^{\langle \alpha,\beta \rangle} $ for elements of degrees $ \alpha,\beta \in \mathbb{N}\mathbb{I} $. 

Clearly, there is an injective homomorphism of $ \mathbb{C}(t) $-algebras induced by $ I^+ $ (resp. $ I^- $) denoted by
\[ I^+_*: \mathcal{H}_t^{\mathrm{tw}}(\mathcal{A}) \hookrightarrow \mathcal{SDH}^{\mathrm{red}}_t(\mathcal{A}) \text{  (resp. } I^-_*:\mathcal{H}_t^{\mathrm{tw}}(\mathcal{A}) \hookrightarrow \mathcal{SDH}^{\mathrm{red}}_t(\mathcal{A}) ).\]

Finally, we prove a triangular decomposition of the motivic semi-derived Hall algebra following the proof of \cite[Theorem 3.25]{LP21}, see also \cite[Lemma 4.7]{Bridgeland13}, using Remark \ref{q = t^2} $ (2) $ and Remark \ref{disjoint union}. 

Note that using the short exact sequences given in the proof of \cite[Proposition 3.12]{LP21}, for any $\begin{tikzcd} X = (X^1 \arrow[r,"d^1"] & X^0) \arrow[l, shift left,"d^0"] \end{tikzcd} \in \mathcal{C}_2(\mathcal{A}) $, we have
    \begin{align*}
        \delta_{[X]} = f_t([X]) \delta_{[C_{H^0(X)} \oplus C_{H^1(X)}^*]} * b_{\widehat{\mathrm{Im}\;d^1}-\widehat{\mathrm{Im}\;d^0}} \in\mathcal{SDH}^{\mathrm{red}}_t(\mathcal{A}) , 
    \end{align*}
where $ f_t([X]) = (-t)^{-\langle \widehat{H^0(X)}- \widehat{H^1(X)},\widehat{\mathrm{Im}\;d^0} - \widehat{\mathrm{Im}\;d^1}\rangle} \Upsilon(\mathrm{Aut}_{\mathcal{C}_2(\mathcal{A})}(C_{H^0(X)} \oplus C_{H^1(X)}^*))\Upsilon(\mathrm{Aut}_{\mathcal{C}_2(\mathcal{A})}(X))^{-1} $.  

The following lemma is proved analogously to the claim in \cite[Theorem 3.25]{LP21} by replacing the cardinality of sets with the dimension of vector spaces. 

\begin{lemma}[{\itshape\cite[Theorem 3.25]{LP21}}]\;
\label{Inequality for End}

Let $ M, N \in \mathcal{A} $, consider the isomorphism $ \mathrm{Ext}^1_{\mathcal{C}_2(\mathcal{A})}(C_M,C_N^*) \cong \mathrm{Hom}_{\mathcal{A}}(M, N) $. 
For any $ g \in \mathrm{Hom}_{\mathcal{A}}(M,N) $, there is a short exact sequence \begin{tikzcd}
    0 \to C_N^* \to (N \arrow[r,"0"] & M \arrow[l, shift left,"g"]) \to C_M \to 0 
\end{tikzcd} 
with an induced triangle $  C_N^* \to C_{\mathrm{Ker}g} \oplus C_{\mathrm{Coker}g}^*  \to C_M \to C_N $ in $ \mathcal{D}_2(\mathcal{A}) $, where $ \mathcal{D}_2(\mathcal{A}) $ is the derived category of $ \mathbb{Z}/2 $-graded complexes. 
Then
    \begin{align*}
        \mathrm{dim}_{\mathbb{C}}\;\mathrm{End}_{\mathcal{D}_2(\mathcal{A})}(C_{\mathrm{Ker}g} \oplus C_{\mathrm{Coker}g}^*) \leqslant \mathrm{dim}_{\mathbb{C}}\;\mathrm{End}_{\mathcal{D}_2(\mathcal{A})}(C_M \oplus C_N^*)
    \end{align*}
    for any $ g \in \mathrm{Hom}_{\mathcal{A}}(M, N) $, and the equality holds if and only if $ g = 0 $.  
\end{lemma}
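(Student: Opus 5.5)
The plan is to compute both sides explicitly, using that $\mathcal{A}$ is hereditary so that morphisms in $\mathcal{D}_2(\mathcal{A})$ between stalk complexes are governed by $\mathrm{Hom}_{\mathcal{A}}$ and $\mathrm{Ext}^1_{\mathcal{A}}$. In $\mathcal{D}_2(\mathcal{A})$ we have $C_A^*\cong C_A[1]$. For $A,B\in\mathcal{A}$ this gives
\[\mathrm{Hom}_{\mathcal{D}_2}(C_A,C_B)\cong \mathrm{Hom}_{\mathcal{A}}(A,B),\qquad \mathrm{Hom}_{\mathcal{D}_2}(C_A,C_B^*)\cong \mathrm{Ext}^1_{\mathcal{A}}(A,B),\]
because there are no $\mathrm{Ext}^2$ terms to fold in. Similarly $\mathrm{Hom}_{\mathcal{D}_2}(C_A^*,C_B)\cong\mathrm{Ext}^1_{\mathcal{A}}(A,B)$ and $\mathrm{Hom}_{\mathcal{D}_2}(C_A^*,C_B^*)\cong\mathrm{Hom}_{\mathcal{A}}(A,B)$. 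Summing the four blocks, I will obtain
\[\dim \mathrm{End}_{\mathcal{D}_2}(C_A\oplus C_B^*)=[A,A]+[B,B]+[A,B]^1+[B,A]^1,\]
where $[X,Y]=\dim\mathrm{Hom}_{\mathcal{A}}(X,Y)$ and $[X,Y]^1=\dim\mathrm{Ext}^1_{\mathcal{A}}(X,Y)$. I first verify the triangle claimed in the statement via the long exact sequence of homology of the middle complex $(N\to M)$ with differential $g$. Its homology is $\mathrm{Ker}\,g$ in degree $0$ and $\mathrm{Coker}\,g$ in degree $1$. Since $\mathcal{A}$ is hereditary, every $\mathbb{Z}/2$-complex is quasi-isomorphic to the sum of its shifted homologies, which gives the middle term $C_{\mathrm{Ker}g}\oplus C^*_{\mathrm{Coker}g}$.

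Next I mimic the finite-field argument of \cite[Theorem 3.25]{LP21}, with dimensions in place of cardinalities. Put $K=\mathrm{Ker}\,g$, $I=\mathrm{Im}\,g$ and $C=\mathrm{Coker}\,g$, so that we have $0\to K\to M\to I\to0$ and $0\to I\to N\to C\to 0$. The target inequality is
\[[K,K]+[C,C]+[K,C]^1+[C,K]^1\le [M,M]+[N,N]+[M,N]^1+[N,M]^1.\]
One way to see this is as upper semicontinuity of $\dim\mathrm{End}$ under degeneration: the object $C_K\oplus C_C^*$ is a "generalization" of $C_M\oplus C_N^*$, which is the $g=0$ case. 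This is the standard route, but I will argue directly. I will apply $\mathrm{Hom}(-,-)$ to the two short exact sequences, using right exactness of $\mathrm{Ext}^1$ in both variables, which holds because $\mathcal{A}$ is hereditary. This should produce inequalities such as $[K,K]\le [M,K]+[K,I]^{1}$-type bounds. Each term on the left will be bounded by terms on the right minus an error involving $[I,\cdot]$ and $[\cdot,I]$. For the additive invariants I will use that the Euler form is additive in classes: $\widehat M=\widehat K+\widehat I$ and $\widehat N=\widehat I+\widehat C$. This reduces the difference RHS $-$ LHS to an expression of the form
\[\text{(sum of nonnegative defect terms)}+([I,I]\text{-type terms})\]
by Euler form identities. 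The most natural bookkeeping is to compare the Euler-form parts, which cancel, and the Hom parts. Concretely, write $\dim\mathrm{End}_{\mathcal{D}_2}(C_A\oplus C_B^*)=\langle \widehat A,\widehat A\rangle+\langle \widehat B,\widehat B\rangle+[A,B]^1+[B,A]^1+[A,A]^1+[B,B]^1$. The difference then becomes a comparison of Ext-terms, which are right exact and hence monotone under passing to quotients and subobjects appropriately, plus the Euler-form difference. That difference equals $-\langle \widehat I,\widehat I\rangle$ plus cross terms, and the cross terms combine with the remaining Ext terms.

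The main obstacle is exactly this bookkeeping: showing that the difference is $\ge0$ and in fact $\ge [I,I]>0$ when $I\ne0$. I expect the cleanest way to do it is via the triangle $C_N^*\to C_K\oplus C_C^*\to C_M\to C_N$. Write $Z=C_K\oplus C_C^*$ and $E=C_M\oplus C_N^*$. Applying $\mathrm{Hom}(Z,-)$ and $\mathrm{Hom}(-,E)$ gives dimension estimates such as $\dim\mathrm{End}(Z)\le\dim\mathrm{Hom}(Z,C_N^*)+\dim\mathrm{Hom}(Z,C_M)$, and similarly with $E$. The key quantity is the connecting morphism $C_M\to C_N$, which is $g$ itself. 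Its nonvanishing forces a strict drop in rank: the class $g$ lies in the image of a map that kills it when composed. This yields strictness exactly when $g\ne0$. If $g=0$, then $K=M$ and $C=N$ and equality is trivial.
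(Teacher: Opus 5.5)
Your last paragraph gives a correct argument, and it is essentially the one the paper imports from Lu--Peng, with dimensions in place of cardinalities. For the triangle $C_N^*\to Z\to C_M\xrightarrow{g}C_N$, with $Z=C_{\mathrm{Ker}g}\oplus C^*_{\mathrm{Coker}g}$ and $E=C_M\oplus C_N^*$, the long exact Hom sequences give $\dim\mathrm{End}(Z)\leqslant\dim\mathrm{Hom}(Z,E)\leqslant\dim\mathrm{End}(E)$ in $\mathcal{D}_2(\mathcal{A})$. You can drop the Euler-form bookkeeping in your middle paragraph, including the unproved lower bound by $\dim\mathrm{End}_{\mathcal{A}}(\mathrm{Im}\,g)$.

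Your strictness step is phrased too vaguely; state it as follows. Equality in the second inequality forces $\mathrm{Hom}(Z,E)\to\mathrm{Hom}(C_N^*,E)$ to be surjective, i.e.\ precomposition with $g\colon C_M^*\to C_N^*$ must kill all of $\mathrm{Hom}(C_N^*,E)$. This fails for the inclusion of the summand $C_N^*$ into $E$ as soon as $g\neq0$.
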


\begin{definition}
    \label{Grading of End in C2A}
    For any complex $ X \in \mathcal{C}_2(\mathcal{A}) $, define
     \begin{align*}
        \varepsilon([X]) = \varepsilon([C_{H^0(X)} \oplus C_{H^1(X)}^*]) = \mathrm{dim}_{\mathbb{C}}\;\mathrm{End}_{\mathcal{D}_2(\mathcal{A})}(C_{H^0(X)} \oplus C_{H^1(X)}^*).
    \end{align*}
    In particular, $ \varepsilon([K]) = \varepsilon([0]) = 0 $ for $ K \in \mathcal{C}_{2,\mathrm{ac}}(\mathcal{A}) $.
    
    Fix a dimension vector pair $ \underline{\alpha} \in \mathbb{N}\mathbb{I} \times \mathbb{N}\mathbb{I} $. 
    For any $ [\mathcal{F} \xrightarrow{f} \mathcal{M}_{\underline{\alpha}} \hookrightarrow \mathcal{M}] \in \mathcal{SDH}^{\mathrm{red}}_t(\mathcal{A})$, define
\begin{align*}
    \varepsilon([\mathcal{F} \xrightarrow{f} \mathcal{M}_{\underline{\alpha}} \hookrightarrow \mathcal{M}]) = \mathrm{max} \{\varepsilon([X]) \mid  X \in \mathcal{C}_2(\mathcal{A}) \text{ s.t. }   \delta_{[X]} = [\mathrm{Spec}\;{\mathbb{C}} \xrightarrow{i_{[X]}} \mathcal{M}] \in \mathrm{Im}\;f(\mathbb{C}) \}.
\end{align*}

\end{definition}

For simplicity, if $ f $ is injective, we set $ \varepsilon(\mathcal{F}) = \varepsilon([\mathcal{F} \xhookrightarrow{f} \mathcal{M}_{\underline{\alpha}} \hookrightarrow \mathcal{M}])  $ and we also have $ \varepsilon([X]) = \varepsilon([\mathrm{Spec}\;\mathbb{C}\xrightarrow{i_{[X]}} \mathcal{M}]) $.

The following proposition can be viewed as a motivic and reduced version of \cite[Theorem 3.25]{LP21}. 

\begin{proposition}
    \label{Triangular decomposition of SDH_t}
    The multiplication map $ x \otimes e^{\alpha} \otimes  y \mapsto I^+_{*}(x) * b_{\alpha} * I^-_{*}(y) $ induces an isomorphism of $ \mathbb{C}(t) $-vector spaces
    \begin{align*}
         \mathcal{H}_t^{\mathrm{tw}}(\mathcal{A}) \otimes \mathbb{C}(t)[K(\mathcal{A})] \otimes \mathcal{H}_t^{\mathrm{tw}}(\mathcal{A})  \xrightarrow{\sim} \mathcal{SDH}^{\mathrm{red}}_t(\mathcal{A}).
    \end{align*}
\end{proposition}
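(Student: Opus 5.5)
We follow the strategy of \cite[Theorem 3.25]{LP21}, proving surjectivity and injectivity of the multiplication map separately, with the key input being the reduction formula $\delta_{[X]} = f_t([X])\, \delta_{[C_{H^0(X)} \oplus C_{H^1(X)}^*]} * b_{\widehat{\mathrm{Im}\;d^1}-\widehat{\mathrm{Im}\;d^0}}$ stated just before Lemma \ref{Inequality for End}.

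\textbf{Surjectivity.} First we reduce every element of $\mathcal{SDH}^{\mathrm{red}}_t(\mathcal{A})$ to a combination of elements supported on complexes of the form $C_M \oplus C_N^*$, multiplied on the right by some $b_\alpha$. Concretely, for $[\mathcal{F}\xrightarrow{f}\mathcal{M}_{\underline{\alpha}}\hookrightarrow\mathcal{M}]$ we stratify $\mathcal{F}$ (scissor relation (1) of Definition \ref{Scissor relation}, together with Remark \ref{disjoint union}) into finitely many pieces on which the dimension vectors of $H^0(X)$, $H^1(X)$, $\mathrm{Im}\,d^1$ and $\mathrm{Im}\,d^0$ are constant. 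On each piece the reduction formula holds in families: the short exact sequences from \cite[Proposition 3.12]{LP21} can be chosen constructibly, and the scalar $f_t$ is a ratio of $\Upsilon$'s of automorphism groups, so it is absorbed by relation (3) exactly as in the proof of Proposition \ref{KX=XK}. This expresses the element as $[\mathcal{F}'\to\mathcal{M}] * b_\alpha$, where $\mathcal{F}'$ is supported on $\{C_M\oplus C_N^*\}$.

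Next, by Remark \ref{Motivic Riedtmann-Peng (XY)} and Lemma \ref{Inequality for End}, we compute
\[
I^+_*(x) * I^-_*(y) = (\text{nonzero scalar})\,[\mathcal{F}'\to\mathcal{M}] + (\text{terms of strictly smaller }\varepsilon),
\]
where $x,y$ are the pieces of $\mathcal{F}'$ in $C_M$ and $C_N^*$. The extensions in $\mathrm{Ext}^1(C_M,C_N^*)\cong\mathrm{Hom}_{\mathcal{A}}(M,N)$ with $g\neq0$ give middle terms whose reduction $C_{\mathrm{Ker}g}\oplus C^*_{\mathrm{Coker}g}$ has strictly smaller $\varepsilon$. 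The $g=0$ locus is the single point contributing the leading term. Here we use $\mathrm{Hom}(C_N^*,C_M)=0$ and the computation of $\Upsilon$ of the quotient stack. Commuting $b_\alpha$ past $I^-_*(y)$ via Corollary \ref{balphaalpha'} only changes scalars. An induction on $\varepsilon$ (Definition \ref{Grading of End in C2A}), within a fixed total degree where $\varepsilon$ is bounded, then yields surjectivity.

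\textbf{Injectivity.} Suppose $\sum I^+_*(x_i)*b_{\alpha_i}*I^-_*(y_i)=0$, with the $x_i$ (respectively $y_i$) linearly independent, supported on distinct strata, and taken with $\varepsilon$-maximal terms. By the leading-term computation above, the top-$\varepsilon$ part of the sum is the element supported on $C_M\oplus C_N^*$ times $b_\alpha$. It then remains to show that the elements $[\mathcal{F}'\to\mathcal{M}]*b_\alpha$ are linearly independent in $\mathcal{SDH}^{\mathrm{red}}_t(\mathcal{A})$. Equivalently, the quotient by $I_2+J_2$, the localization and the reduction do not kill the span of $\{\delta_{[C_M\oplus C_N^*]}\, b_\alpha\}$.

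For this we would construct the inverse map directly. We define a linear map from $\mathcal{H}^{\mathrm{tw}}_t(\mathcal{C}_2(\mathcal{A}))$ to $\mathcal{H}_t^{\mathrm{tw}}(\mathcal{A})\otimes\mathbb{C}(t)[K(\mathcal{A})]\otimes\mathcal{H}_t^{\mathrm{tw}}(\mathcal{A})$ by sending $\delta_{[X]}$ (constructibly, in families) to $f_t([X])\,\delta_{H^0(X)}\otimes e^{\widehat{\mathrm{Im}d^1}-\widehat{\mathrm{Im}d^0}}\otimes\delta_{H^1(X)}$. We then check that it kills the generators of $I_2$ and $J_2$: if $0\to K\to L\to X\to0$ with $K$ acyclic, then $L$ and $K\oplus X$ have the same homology, and the $\Upsilon(\mathrm{Aut})$ factors cancel. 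We also check that it is compatible with the localization and with the reduction $b_\alpha b_\alpha^*=1$. This yields a left inverse on the image.

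\textbf{Main obstacle.} The hard part is making this inverse map (and the families version of the reduction formula) well defined at the motivic level, since constructible stratifications and the $\mathrm{Aut}$-weights must be handled uniformly. We would first attempt this following \cite[Lemma 4.7]{Bridgeland13}, replacing point counts by $\Upsilon$ as permitted by Remark \ref{q = t^2}(2).
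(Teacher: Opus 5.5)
Your surjectivity argument is the paper's ($\varepsilon$-induction via Lemma~\ref{Inequality for End}), and replacing the paper's appeal to \cite[Theorem 3.20]{LP21} with an explicit map $\Psi$ is a reasonable idea. But there is a genuine gap, and it affects both halves at the point where you pass from single complexes to families.

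\textbf{Families over $\mathcal{R}\times\mathcal{R}$ are not finite sums of products.} For a family $[\mathcal{F}\to\mathcal{M}]$, the assignment $X\mapsto (H^0(X),H^1(X))$ produces a class in $K^0_\Upsilon(\mathrm{St}/\mathcal{R}\times\mathcal{R})$; note that the locus $d^1=d^0=0$ in $\mathcal{M}$ is exactly $\mathcal{R}\times\mathcal{R}$. This space is strictly larger than the image of $\mathcal{H}_t^{\mathrm{tw}}(\mathcal{A})\otimes\mathcal{H}_t^{\mathrm{tw}}(\mathcal{A})$ under $x\otimes y\mapsto x\boxtimes y$.

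For a concrete example, take one vertex with two loops $h_1,h_2$. For $p=(a:b)\in\mathbb{P}^1$, let $M_p$ be given by $x_{h_1}=aE_{12}$, $x_{h_2}=bE_{12}$, where $E_{12}$ is a matrix unit. These modules are nilpotent, indecomposable and pairwise non-isomorphic. The diagonal family $D=\{C_{M_p}\oplus C^*_{M_p}\}_{p\in\mathbb{P}^1}$ is an element of $\mathcal{SDH}^{\mathrm{red}}_t(\mathcal{A})$ of exactly the type in your spanning set. Compare fibrewise $\Upsilon$-functions on the points $(M_p,M_q)$:
\begin{itemize}
\item for $D$ this function is $\delta_{p,q}$, which has infinite rank;
\item for $\sum_{i=1}^n x_i\boxtimes y_i$ it has rank at most $n$.
\end{itemize}
So $\Psi$ does not land in $\mathcal{H}_t^{\mathrm{tw}}(\mathcal{A})\otimes\mathbb{C}(t)[K(\mathcal{A})]\otimes\mathcal{H}_t^{\mathrm{tw}}(\mathcal{A})$. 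Likewise, in your surjectivity step the ``pieces $x$, $y$ of $\mathcal{F}'$'' do not exist for $\mathcal{F}'=D$.

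\textbf{This is not only a presentational gap.} Suppose $\Psi$ is well defined with its correct target $K^0_\Upsilon(\mathrm{St}/\mathcal{R}\times\mathcal{R})\otimes\mathbb{C}(t)[K(\mathcal{A}))]$, which is what injectivity requires. Then your own leading-term computation shows that $\Psi$ composed with the multiplication map is triangular with respect to the dimension vector of $H^0$: the terms with $g\neq 0$ lower it by $\widehat{\mathrm{Im}\,g}$. The leading part is $x\otimes e^\alpha\otimes y\mapsto c\,(x\boxtimes y)\otimes e^\alpha$ with $c\neq 0$. Comparing top components then shows that $D$ is not in the image.

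The paper's proof has the same gap. It spans by arbitrary elements $[\mathcal{F}\to[\mathcal{O}_1\oplus\mathcal{O}_2^*/G]]*b_\alpha$, but its surjectivity step only treats the product families $[\mathcal{O}_1/G]\times[\mathcal{O}_2^*/G]$. What your method can actually establish is the analogous statement with $K^0_\Upsilon(\mathrm{St}/\mathcal{R}\times\mathcal{R})$, mapped into $\mathcal{SDH}^{\mathrm{red}}_t(\mathcal{A})$ by convolution, in place of the algebraic tensor product.

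\textbf{Two further points in your injectivity plan.}
\begin{itemize}
\item A linear map that kills the generators of $I_2(\mathcal{A})$ and $J_2(\mathcal{A})$ need not kill the two-sided ideals they generate. You must also show that these ideals are spanned by the generating differences, or that $\Psi$ is compatible with Hall products on either side.
\item $\Psi$ inverts the map $\delta_M\otimes e^\alpha\otimes\delta_N\mapsto\delta_{[C_M\oplus C_N^*]}*b_\alpha$, not the multiplication map. So injectivity has to come from triangularity. For that, $\dim H^0$ is a better filtration than $\varepsilon$, since $\varepsilon$ is not constant on product families.
\end{itemize}
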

\begin{proof}   
    Since we are considering representable morphisms and stacks of finite type, by the discussion above Lemma \ref{Grading of End in C2A}, we obtain that as a $ \mathbb{C}(t) $-vector space, $ \mathcal{SDH}^{\mathrm{red}}_t(\mathcal{A}) $ is spanned by elements of the form
    \[[\mathcal{F} \xrightarrow{f}[\mathcal{O}_1\oplus \mathcal{O}_2^*/G_{\underline{\alpha}_1+\underline{\alpha}_2^*}]\hookrightarrow \mathcal{M}] * b_{\alpha},\]
    where $ \mathcal{O}_i \subseteq C_2(Q, \underline{\alpha}_i) $ is a positive $ G_{\underline{\alpha}_i} $-invariant constructible subset, $ \underline{\alpha}_i = (\alpha_i^1,\alpha_i^0) \in \mathbb{N}\mathbb{I} \times \mathbb{N}\mathbb{I} $ for  $ i=1,2 $, and $ \alpha \in K(\mathcal{A}) $. 

    Let $ x_1= [\mathcal{F}_1 \xrightarrow{f_1}[\mathcal{O}_1/G_{\underline{\alpha}_1}]\hookrightarrow\mathcal{M}], x_2^*= [\mathcal{F}_2 \xrightarrow{f_2}[\mathcal{O}_2^*/G_{\underline{\alpha}_2^*}]\hookrightarrow\mathcal{M}] \in \mathcal{SDH}^{\mathrm{red}}_t(\mathcal{A}) $, where $ \mathcal{O}_i \subseteq C_2(Q, \underline{\alpha}_i) $ is a positive $ G_{\underline{\alpha}_i} $-invariant constructible subset and $ f_i $ factors through $ \mathcal{R}_{\alpha_i^0} $  for $ i=1,2 $. 
    Let
    \[[\mathcal{G} \xrightarrow{g}[\mathcal{L}/G_{\underline{\beta}}] \hookrightarrow \mathcal{M}] * b_{\alpha}\]
    be a term appearing in the product $ x_1 * x_2^* $ with nonzero coefficient $ c(\mathcal{L}) \in \mathbb{C}(t) $,  where $ \mathcal{L}\subseteq C_2(Q, \underline{\beta})  $ is a $ G_{\underline{\beta}} $-invariant constructible subset such that $ \mathcal{L} = \mathcal{L}_1 \oplus \mathcal{L}_2^* $ for some positive $ \mathcal{L}_1, \mathcal{L}_2 $, and $ \alpha \in K(\mathcal{A}) $. 
    By Lemma \ref{Inequality for End}, we have
    \begin{align*}
        \varepsilon([\mathcal{L}/G_{\underline{\beta}}]) \leqslant \varepsilon([\mathcal{O}_1 \oplus \mathcal{O}_2^*/G_{\underline{\alpha}_1+\underline{\alpha}_2^*}]) = \varepsilon_0.
    \end{align*}
    
    Moreover, there exists a finite decomposition of $ \mathcal{F}_1 \times \mathcal{F}_2 $, indexed by some finite set $ L $, as a refinement of the stratification $ M $ in Proposition \ref{Motivic Riedtmann-Peng}, such that
    \begin{align*}
        x_1 * x_2^* = \sum_{l \in L} c((\mathcal{O}_1 \oplus \mathcal{O}_2^*)_l) [\mathcal{F}_l\xrightarrow{f_l}[(\mathcal{O}_1 \oplus \mathcal{O}_2^*)_l/G_{\underline{\alpha}_1+\underline{\alpha}_2^*}]\hookrightarrow \mathcal{M}] + \text{ ( terms with $ \varepsilon < \varepsilon_0 $ )},
    \end{align*}
    where $ (\mathcal{O}_1 \oplus \mathcal{O}_2^*)_l \subseteq \mathcal{O}_1 \oplus \mathcal{O}_2^* $ is a constructible subset with nonzero $ c((\mathcal{O}_1 \oplus \mathcal{O}_2^*)_l) \in \mathbb{C}(t) $. 

    For any $ m \in \mathbb{N} $, let $ \mathcal{SDH}^{\mathrm{red}}_t(\mathcal{A})_m  $ be the $ \mathbb{C}(t) $-subspace of $ \mathcal{SDH}^{\mathrm{red}}_t(\mathcal{A}) $ spanned by elements of the form
    \begin{align*}
        [\mathcal{F}\xrightarrow{f}[\mathcal{O}_1\oplus \mathcal{O}_2^*/G_{\underline{\alpha}_1+\underline{\alpha}_2^*}] \hookrightarrow \mathcal{M}] * b_{\alpha} \text{ s.t. } \varepsilon([\mathcal{F}\xrightarrow{f}\mathcal{M}])= \varepsilon([\mathcal{O}_1\oplus \mathcal{O}_2^*/G_{\underline{\alpha}_1+\underline{\alpha}_2^*}])=m,
    \end{align*}
    then $ \mathcal{SDH}^{\mathrm{red}}_t(\mathcal{A}) = \bigoplus_{m \in \mathbb{N}}  \mathcal{SDH}^{\mathrm{red}}_t(\mathcal{A})_m  $ as a $ \mathbb{N} $-graded vector space.
    
    Thus, it suffices to prove the injectivity in the corresponding homogeneous component. 
    In other words, assume that
    \begin{align*}
        \sum_{i=1}^n a_i(t) [\mathcal{F}_i \xrightarrow{f_i}[\mathcal{O}_{i,1} \oplus \mathcal{O}_{i,2}^*/G_{\underline{\alpha}_{i,1}+\underline{\alpha}_{i,2}}] \hookrightarrow \mathcal{M}] * b_{\alpha_i} = 0 \in  \mathcal{SDH}^{\mathrm{red}}_t(\mathcal{A}),
    \end{align*}
    where $ a_i(t) \in \mathbb{C}(t) $, $ \mathcal{O}_{i,j} \subseteq C_2(Q, \underline{\alpha}_{i,j}) $ is a positive $ G_{\underline{\alpha}_{i,j}} $-invariant constructible subset, $ \underline{\alpha}_{i,j} \in \mathbb{N}\mathbb{I} \times \mathbb{N}\mathbb{I} $, and $ \alpha_i \in K(\mathcal{A}) $ for any $ 1 \leqslant i \leqslant n,\;j=1,2 $, we must have $ a_i(t) = 0 $ for all $ i $. 

    Using the scissor relation in Definition \ref{Scissor relation} (1), we may always assume that the constructible sets $ \mathcal{O}_{i,1} \oplus \mathcal{O}_{i,2}^* (1 \leqslant i \leqslant n)$ are disjoint to each other, i.e., the complexes corresponding to the supports of these $ n $ elements  $ [\mathcal{F}_i \xrightarrow{f_i} \mathcal{M}] $ are non-isomorphic. 
    Then $ a_i(t) = 0 $ for all $ i $ by a similar argument in \cite[Theorem 3.20]{LP21} according to Remark \ref{q = t^2} $ (2) $. 
    Thus, the multiplication map is injective. 

    For the surjectivity, by Remark \ref{disjoint union}, we may always assume that $ \mathcal{O}_1 \cap \mathcal{O}_2^* = \emptyset $. 
    Moreover, stratifying $ \mathcal{O}_1 \oplus \mathcal{O}_2^* $ if necessary, we may also assume that
    \begin{align*}
    &[[\mathcal{O}_1/G_{\underline{\alpha}_1}]\hookrightarrow \mathcal{M}] * [[\mathcal{O}_2^*/G_{\underline{\alpha}_2^*}] \hookrightarrow \mathcal{M}] \\
    =& c(\mathcal{O}_1 \oplus \mathcal{O}_2^*) [[\mathcal{O}_1/G_{\underline{\alpha}_1}] \times [\mathcal{O}_2^*/G_{\underline{\alpha}_2^*}]\xrightarrow{\iota} [\mathcal{O}_1 \oplus \mathcal{O}_2^*/G_{\underline{\alpha}_1+\underline{\alpha}_2^*}] \hookrightarrow \mathcal{M}] + \text{ ( terms with $ \varepsilon < \varepsilon_0 $ )},
    \end{align*}
    where  $ c(\mathcal{O}_1 \oplus \mathcal{O}_2^*) \in \mathbb{C}(t) $ is nonzero and $ \iota $ corresponds to the map $ ([X],[Y]) \mapsto [X \oplus Y] $. 
    Multiplying this equation with $ c(\mathcal{O}_1 \oplus \mathcal{O}_2^*)^{-1} $ and using induction on $ \varepsilon([\mathcal{O}_1 \oplus \mathcal{O}_2^*/G_{\underline{\alpha}_1+\underline{\alpha}_2^*}]) $, we can prove that as a $ \mathbb{C}(t) $-vector space,  $ \mathcal{SDH}^{\mathrm{red}}_t(\mathcal{A}) $ is spanned by elements of the form
\[[\mathcal{F}_1 \xrightarrow{f_1}[\mathcal{O}_1/G_{\underline{\alpha}_1}] \hookrightarrow \mathcal{M}]  * [\mathcal{F}_2 \xrightarrow{f_2}[\mathcal{O}_2^*/G_{\underline{\alpha}_2^*}] \hookrightarrow \mathcal{M}] *  b_{\alpha}, \]
    where $ \mathcal{O}_i \subseteq C_2(Q, \underline{\alpha}_i) $ is a positive $ G_{\underline{\alpha}_i} $-invariant constructible subset, $ \underline{\alpha}_i \in \mathbb{N}\mathbb{I} \times \mathbb{N}\mathbb{I} $ for  $ i=1,2 $, and $ \alpha \in K(\mathcal{A}) $, which finishes the proof. 

\end{proof}

\newpage

\section{Realization of enveloping algebras of Borcherds-Bozec algebras}
\label{Realization of enveloping algebras of Borcherds-Bozec algebras}

\subsection{Borcherds-Bozec algebras and Quantum Borcherds-Bozec algebras}\;
\label{Borcherds-Bozec algebras and Quantum Borcherds-Bozec algebras}

Let $ \mathbb{I} $ be a finite index set, $ A = (a_{ij})_{i,j \in \mathbb{I}} $ is called an (even) symmetric Borcherds-Cartan matrix if
\begin{enumerate}
    \item $ a_{ii} \in \{2,0,-2,-4,\cdots \} $;
    \item $ a_{ij} = a_{ji} \in \mathbb{Z}_{\leqslant 0} $ for any $ i \neq j $.
\end{enumerate}
Set
\begin{align*}
    \mathbb{I}^{\mathrm{re}} = \{ i \in \mathbb{I} \mid a_{ii}=2\}, \;\mathbb{I}^{\mathrm{im}} = \{ i \in \mathbb{I} \mid a_{ii} \leqslant 0\} \text{ and }
     \mathbb{I}^{\infty} = (\mathbb{I}^{\mathrm{re}} \times \{1\}) \cup (\mathbb{I}^{\mathrm{im}} \times \mathbb{Z}_{>0}).
\end{align*}
Write $ i $ instead of $ (i,1) $ for $ i \in \mathbb{I}^{\mathrm{re}} $. 

Let $ \mathcal{G} $ be  the (derived) Borcherds-Bozec algebra over $ \mathbb{C} $ associated with $ A $. 
The following definition is given by \cite[Proposition 2.1]{FKKB21}.

\begin{definition}
   The universal enveloping algebra $ U(\mathcal{G}) $ associated with an (even) symmetric Borcherds-Cartan matrix $ A $ is defined to be an associative algebra over $\mathbb{C} $ with $ 1 $ generated by the elements $ e_{il},f_{il} \;((i,l)\in \mathbb{I}^{\infty})  $ and $ h_i \;(i \in \mathbb{I}) $ with the following defining relations:
    \begin{align}
        & h_i h_j - h_j h_i = 0 \quad \text{ for } i,j \in \mathbb{I}, \label{Relation 1}\\
        & e_{ik} f_{jl} - f_{jl} e_{ik} = k \delta_{ij}\delta_{kl} h_i \quad \text{ for } (i,k),(j,l) \in \mathbb{I}^{\infty}, \label{Relation 2}\\
        & h_ie_{jl}-e_{jl}h_i = l a_{ij}e_{jl}, \; h_if_{jl}-f_{jl}h_i = -l a_{ij}f_{jl} \quad \text{ for } i \in \mathbb{I},(j,l) \in \mathbb{I}^{\infty}, \label{Relation 3}\\
        & \sum\limits_{k=0}^{1-la_{ij}}(-1)^k \begin{pmatrix}
            1-l{a_{ij}} \\
            k
        \end{pmatrix}
        e_{i}^{1-la_{ij}-k} e_{jl} e_i^k = 0 \quad \text{ for }  i \in \mathbb{I}^{\mathrm{re}}, i \neq (j,l) \in \mathbb{I}^{\infty}, \label{Relation 4}\\
        & \sum\limits_{k=0}^{1-la_{ij}}(-1)^k \begin{pmatrix}
            1-l{a_{ij}} \\
            k
        \end{pmatrix}
        f_{i}^{1-la_{ij}-k} f_{jl} f_i^k = 0 \quad \text{ for }  i \in \mathbb{I}^{\mathrm{re}}, i \neq (j,l) \in \mathbb{I}^{\infty}, \label{Relation 5}\\
        & e_{ik} e_{jl} - e_{jl} e_{ik} =  f_{ik} f_{jl} - f_{jl} f_{ik} = 0 \quad \text{ for } a_{ij} = 0.\label{Relation 6}
    \end{align}
\end{definition}

Let $ t $ be an indeterminate and for any $ n,m \in \mathbb{Z} $, denote
\begin{align*}
    [n]_{t} = \frac{t^{n}-t^{-n}}{t-t^{-1}}, [n]_t! = \prod_{k=1}^n [k]_t \text{ and } \begin{bmatrix}
        m \\ n
    \end{bmatrix}_{t} = \frac{[m]_t!}{[n]_t![m-n]_t!}.
\end{align*}
Set $  [n] = [n]_{-t} , [n]! = [n]_{-t}!   $ and $ \begin{bmatrix}
        m \\ n
    \end{bmatrix} =\begin{bmatrix}
        m \\ n
    \end{bmatrix}_{-t} $.

For any $ i \in \mathbb{I} $, denote $  $
\begin{align*}
    (-t)_{(i)} = (-t)^{\frac{a_{ii}}{2}}.
\end{align*}

The following definition is given by \cite[Definition 2.8]{Bozec15}, \cite[Definition 3.7]{Bozec16} and \cite[Definition 2.2, Lemma A.1]{FKKB21}, see also \cite[Definition 3.1]{L23}, and we substitute the variable by $ -t $ since we are considering its classical limit at $ t=-1 $.  

\begin{definition}
    The quantum Borcherds-Bozec algebra  $ U_{-t}(\mathcal{G}) $ associated with an (even) symmetric Borcherds-Cartan matrix $ A $ is defined to be an associative algebra over $ \mathbb{C}(t) $ with $ 1 $ generated by the elements $ e_{il},f_{il} \;((i,l)\in \mathbb{I}^{\infty}) $ and $  K_i^{\pm1} \;(i \in \mathbb{I}) $
    with the following defining relations:
    \begin{align}
        & K_i K_i^{-1} = K_i^{-1}K_i = 1,\; [K_i,K_j] = 0 \quad \text{ for } i,j \in \mathbb{I}, \label{Quantum Relation 1}\\
        & K_i e_{jl} = (-t)^{la_{ij}} e_{jl}K_i,\; K_i f_{jl} = (-t)^{-la_{ij}} f_{jl}K_i \quad \text{ for } i\in \mathbb{I},(j,l) \in \mathbb{I}^{\infty},\label{Quantum Relation 2}\\
        & \sum\limits_{k=0}^{1-la_{ij}} (-1)^k \begin{bmatrix}
            1-la_{ij}  \\ k
        \end{bmatrix} e_{i}^{1-la_{ij}-k} e_{jl} e_i^k = 0 \quad \text{ for }  i \in \mathbb{I}^{\mathrm{re}}, i \neq (j,l) \in \mathbb{I}^{\infty}, \label{Quantum Relation 3}\\
        & \sum\limits_{k=0}^{1-la_{ij}} (-1)^k\begin{bmatrix}
            1-la_{ij}  \\ k
        \end{bmatrix} f_{i}^{1-la_{ij}-k} f_{jl} f_i^k = 0 \quad \text{ for }  i \in \mathbb{I}^{\mathrm{re}}, i \neq (j,l) \in \mathbb{I}^{\infty},\label{Quantum Relation 4}\\
        & e_{ik}e_{jl} - e_{jl}e_{ik} = f_{ik}f_{jl} - f_{jl}f_{ik} = 0 \quad \text{ for } a_{ij} = 0, \label{Quantum Relation 5}\\
        & e_{ik}f_{jl} - f_{jl}e_{ik} = 0 \quad \text{ for } i \neq j,\label{Quantum Relation 6}\\
        & \sum\limits_{\substack{m+r=k\\
        r+s=l}}  (-t)_{(i)}^{r(m-s)} \nu_{ir} e_{is} f_{im} (K_i^{-1})^r = \sum\limits_{\substack{m+r=k\\
        r+s=l}} (-t)_{(i)}^{-r(m-s)} \nu_{ir} f_{im} e_{is} (K_i)^r \label{Quantum Relation 7}\\
        & \quad\quad\quad\quad\quad\quad\quad\quad\quad\quad\quad\quad\quad\quad\quad\quad\quad\quad\quad\quad \notag \text{ for } (i,k),(i,l) \in \mathbb{I}^{\infty}.
    \end{align}
\end{definition}

From now on, we assume that $ \nu_{ir} = \frac{1}{\varphi_r(t^2)} $ where $ \varphi_r(x) = (1-x)(1-x^2)\cdots(1-x^r)  $ for any $ (i,r) \in \mathbb{I}^{\infty} $.

Consider the quiver $ Q =(\mathbb{I},\mathbb{H},s,t) $ as given in the beginning of Section \ref{Motivic form of semi-derived Ringel-Hall algebras}, let $ g_i $ be the number of loops at vertex $ i $ and $ c_{ij} $ be the number of arrows in $ \mathbb{H} $ from $ i $ to $ j $ if $ i \neq j $. 

Let $ A_Q = (a_{ij})_{i,j \in \mathbb{I}} $ be the symmetric Borcherds-Cartan matrix associated with $ Q $, i.e., 
\begin{align*}
    a_{ij} = \begin{cases}
        2 - 2 g_i &\text{ if } i = j,\\
        -c_{ij}-c_{ji} &\text{ if } i \neq j,
    \end{cases}
\end{align*}
and $ \mathcal{G}_{Q} $ be the Borcherds-Bozec algebra associated with $ A_Q $.

Let $ S_i $ be the nilpotent simple module at vertex $ i \in \mathbb{I} $, then $ (\widehat{S_i}, \widehat{S_j}) = a_{i j} $ for any $ i,j \in \mathbb{I} $.

The following lemma can be viewed as a motivic version of \cite[Lemma 3.2]{L23}. 

\begin{lemma}
    \label{eikfil}
    
    For any $ k,l \geqslant 1 $, in $ \mathcal{SDH}_t^{\mathrm{red}}(\mathcal{A})$, we have
    \begin{align*}
        \delta_{[C_{S_i^{\oplus k}}]} * \delta_{[C_{S_i^{\oplus l}}^*]} &= \sum_{r=0}^{\mathrm{min}\{k,l\}} (-t)_{(i)}^{r(l-k)} \frac{t^{-2r(k+l-r)}}{\varphi_r(t^{-2})} \delta_{[C_{S_i^{\oplus (k-r)}} \oplus C_{S_i^{\oplus (l-r)}}^*]} * (b_{S_i}^{-1})^{*r},\\
        \delta_{[C_{S_i^{\oplus l}}^*]} * \delta_{[C_{S_i^{\oplus k}}]} &= \sum_{r=0}^{\mathrm{min}\{k,l\}} (-t)_{(i)}^{-r(l-k)} \frac{t^{-2r(k+l-r)}}{\varphi_r(t^{-2})} \delta_{[C_{S_i^{\oplus (k-r)}} \oplus C_{S_i^{\oplus (l-r)}}^*]} * (b_{S_i})^{*r}.
    \end{align*}
    
\end{lemma}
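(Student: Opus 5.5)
We compute both products with the motivic Riedtmann--Peng formula (Remark \ref{Motivic Riedtmann-Peng (XY)}) and then rewrite the resulting middle terms with the reduction formula $\delta_{[X]} = f_t([X]) \delta_{[C_{H^0(X)} \oplus C_{H^1(X)}^*]} * b_{\widehat{\mathrm{Im}\;d^1}-\widehat{\mathrm{Im}\;d^0}}$ stated before Lemma \ref{Inequality for End}. Write $M=S_i^{\oplus k}$ and $N=S_i^{\oplus l}$.

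\emph{Extensions in the first product.} For $\delta_{[C_M]}*\delta_{[C_N^*]}$, Lemma \ref{Inequality for End} gives $\mathrm{Ext}^1_{\mathcal{C}_2(\mathcal{A})}(C_M,C_N^*)\cong \mathrm{Hom}_{\mathcal{A}}(M,N)\cong M_{l\times k}(\mathbb{C})$. The class $g$ has middle term $Z_g=(N\xrightarrow{0}M,\ g:M\to N)$. We also need $\mathrm{Hom}_{\mathcal{C}_2(\mathcal{A})}(C_M,C_N^*)=0$, which holds since $C_M$ sits in degree $0$ and $C_N^*$ in degree $1$.

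\emph{Stratify by rank.} We stratify $\mathrm{Ext}^1$ by $r=\mathrm{rank}\,g$, for $0\le r\le \min\{k,l\}$. Each stratum is a single orbit: $Z_g\cong C_{S_i^{k-r}}\oplus C^*_{S_i^{l-r}}\oplus K^*_{S_i^{r}}$, with $\mathrm{Im}\,d^0\cong S_i^{r}$ and $\mathrm{Im}\,d^1=0$. Using $\Upsilon(\mathbb{C}^n)=t^{2n}$, the stratum has
\[\Upsilon=\Upsilon(\text{rank-}r\ l\times k\text{ matrices})=\frac{\Upsilon(\mathrm{GL}_k)\Upsilon(\mathrm{GL}_l)}{\Upsilon(\mathrm{GL}_{k-r})\Upsilon(\mathrm{GL}_{l-r})\Upsilon(\mathrm{GL}_r)\,t^{2r(k-r)+2r(l-r)}}.\]
By Remark \ref{Motivic Riedtmann-Peng (XY)} the stack quotient contributes the factor $\Upsilon(\mathrm{Aut}(C_M))^{-1}\Upsilon(\mathrm{Aut}(C_N^*))^{-1}=\Upsilon(\mathrm{GL}_k)^{-1}\Upsilon(\mathrm{GL}_l)^{-1}$. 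Converting the point class $[\mathrm{stratum}\to\mathcal{M}]$ into $\delta_{[Z_r]}$ costs a further factor $\Upsilon(\mathrm{Aut}(Z_r))^{-1}$ relative to the orbit count. Then we apply the reduction formula to $\delta_{[Z_r]}$. This produces $\Upsilon(\mathrm{Aut}(C_{S_i^{k-r}}\oplus C^*_{S_i^{l-r}}))\Upsilon(\mathrm{Aut}(Z_r))^{-1}$, the element $b_{-r\widehat{S_i}}=(b_{S_i}^{-1})^{*r}$, and the sign factor $(-t)^{-\langle (k-r)\widehat{S_i}-(l-r)\widehat{S_i},\, r\widehat{S_i}\rangle}=(-t)_{(i)}^{r(l-k)}$.

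\emph{Collecting powers.} We add the twist $(-t)^{\langle\alpha^1,\beta^1\rangle+\langle\alpha^0,\beta^0\rangle}$. For $C_M$ and $C_N^*$ the only components sharing a degree are empty, so this twist is $1$. The remaining work is to check that all factors collapse to $t^{-2r(k+l-r)}/\varphi_r(t^{-2})$. The key input is that $\mathrm{Aut}(C_{S_i^{k-r}}\oplus C^*_{S_i^{l-r}})=\mathrm{GL}_{k-r}\times\mathrm{GL}_{l-r}$, which cancels the $\mathrm{GL}_{k-r},\mathrm{GL}_{l-r}$ from the orbit count. The factor $\Upsilon(\mathrm{GL}_r)^{-1}=t^{-r(r-1)}/\prod_{j=1}^r(t^{2j}-1)$ must combine with the leftover powers of $t$ to give $t^{-2r(k+l-r)}/\varphi_r(t^{-2})$, using $\prod_{j=1}^r(t^{2j}-1)=t^{r(r+1)}\varphi_r(t^{-2})$.

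\emph{Second product.} The dual product $\delta_{[C_N^*]}*\delta_{[C_M]}$ is handled identically. Now $\mathrm{Ext}^1(C_N^*,C_M)\cong\mathrm{Hom}(N,M)$, and the middle term acquires $K_{S_i^r}$ instead of $K^*_{S_i^r}$, so $\mathrm{Im}\,d^1\cong S_i^r$. This yields $b_{S_i}^{*r}$, and the sign of the exponent $\langle H^0-H^1,\mathrm{Im}d^0-\mathrm{Im}d^1\rangle$ flips, giving $(-t)_{(i)}^{-r(l-k)}$.

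The main obstacle is this bookkeeping: correctly tracking $\Upsilon(\mathrm{Aut}(Z_r))$, which involves the radical of $\mathrm{End}(Z_r)$ and includes $\mathrm{Hom}$ between $K^*$ and the $C$-summands. The factor must cancel exactly between the Riedtmann--Peng step and $f_t$, and the stated normalization depends on that cancellation. I would verify it first in the case $k=l=r=1$.
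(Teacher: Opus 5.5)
Your route is the paper's: Riedtmann--Peng, stratification of $\mathrm{Ext}^1_{\mathcal{C}_2(\mathcal{A})}(C_M,C_N^*)\cong M_{l\times k}(\mathbb{C})$ by rank, the identification $Z_r\cong W_r\oplus K^*_{S_i^{\oplus r}}$ with $W_r=C_{S_i^{\oplus(k-r)}}\oplus C^*_{S_i^{\oplus(l-r)}}$, and the rewriting of $\delta_{[Z_r]}$ via $f_t$. Your twist ($=1$), sign exponent and $b$-element are correct. Counting rank-$r$ matrices by orbit--stabilizer, instead of the paper's $\mathrm{GL}_r$-bundle over $\mathrm{Gr}(r,k)\times\mathrm{Gr}(r,l)$, is equivalent.

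The step you leave open is stated with the wrong exponent, and as written it breaks the result. You say that passing from the stratum class to $\delta_{[Z_r]}$ costs $\Upsilon(\mathrm{Aut}(Z_r))^{-1}$. But $f_t([Z_r])$ already contributes $\Upsilon(\mathrm{Aut}(Z_r))^{-1}$, so the two compound to $\Upsilon(\mathrm{Aut}(Z_r))^{-2}$ instead of cancelling. Already for $k=l=r=1$, where $Z_1=K^*_{S_i}$ and $\Upsilon(\mathrm{Aut}(Z_1))=t^2-1$, your bookkeeping gives $(t^2-1)^{-3}$ rather than $t^{-2}/\varphi_1(t^{-2})=(t^2-1)^{-1}$.

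The paper writes $I_2(\mathcal{A})+J_2(\mathcal{A})$, $b_X$ and $f_t$ in a fixed normalization. In it, a stratum of extensions with middle term $Z$ contributes
$\Upsilon(\mathrm{Ext}^1(X,Y)_Z)\,\Upsilon(\mathrm{Aut}(Z))\,\Upsilon(\mathrm{Aut}(X))^{-1}\Upsilon(\mathrm{Aut}(Y))^{-1}\Upsilon(\mathrm{Hom}(X,Y))^{-1}\,\delta_{[Z]}$,
the motivic analogue of Riedtmann's formula. This is exactly how the paper's proof combines $\Upsilon(\mathrm{GL}_r)\Upsilon(\mathrm{Gr}(r,k))\Upsilon(\mathrm{Gr}(r,l))$ with the identity $\Upsilon(\mathrm{Aut}(Z^{(r)}))\delta_{[Z^{(r)}]}=(-t)_{(i)}^{r(l-k)}\Upsilon(\mathrm{Aut}(W_r))\,\delta_{[W_r]}*(b_{S_i}^*)^{*r}$. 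Independently of conventions: with $f_t$ as you quote it and a target containing no $\mathrm{Aut}(Z_r)$, the first step must supply $\Upsilon(\mathrm{Aut}(Z_r))^{+1}$.

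With this correction your orbit count closes the argument at once, and you never need $\mathrm{rad}\,\mathrm{End}(Z_r)$ separately. The stabilizer of a rank-$r$ matrix in $\mathrm{GL}_k\times\mathrm{GL}_l$ is exactly $\mathrm{Aut}_{\mathcal{C}_2(\mathcal{A})}(Z_r)$. Hence $\Upsilon(\text{stratum}_r)\,\Upsilon(\mathrm{Aut}(Z_r))=\Upsilon(\mathrm{GL}_k)\Upsilon(\mathrm{GL}_l)$, and since $\mathrm{Hom}(C_M,C_N^*)=0$, each stratum contributes exactly $\delta_{[Z_r]}$. The whole coefficient is then $f_t([Z_r])$:
$(-t)_{(i)}^{r(l-k)}\Upsilon(\mathrm{GL}_{k-r})\Upsilon(\mathrm{GL}_{l-r})\Upsilon(\mathrm{Aut}(Z_r))^{-1}=(-t)_{(i)}^{r(l-k)}\Upsilon(\mathrm{GL}_r)^{-1}t^{-2r(k+l-2r)}=(-t)_{(i)}^{r(l-k)}t^{-2r(k+l-r)}/\varphi_r(t^{-2})$.
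The second identity is the same computation with $K_{S_i^{\oplus r}}$ in place of $K^*_{S_i^{\oplus r}}$.
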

\begin{proof}
    We have $ \mathrm{Hom}_{\mathcal{C}_2(\mathcal{A})}(C_{S_i^{\oplus k}}, C_{S_i^{\oplus l}}^* ) = 0 $, $ \mathrm{Ext}^1_{\mathcal{C}_2(\mathcal{A})}(C_{S_i^{\oplus k}}, C_{S_i^{\oplus l}}^* ) \cong \mathrm{Hom}_{\mathcal{A}}(S_i^{\oplus k}, S_i^{\oplus l}) \cong \mathbb{C}^{kl}  $.
    Moreover, any $ Z_{\xi} \in \mathcal{C}_2(\mathcal{A}) $ such that $ \mathrm{Ext}^1_{\mathcal{C}_2(\mathcal{A})}(C_{S_i^{\oplus k}}, C_{S_i^{\oplus l}}^* )_{Z_{\xi}} \neq \emptyset $ is of the form $  Z_{\xi}^{(r)} \cong C_{S_i^{\oplus (k-r)}} \oplus C_{S_i^{\oplus (l-r)}}^* \oplus K_{S_i^{\oplus r}}^* $
    for some $ 0 \leqslant r \leqslant \mathrm{min}\{k,l\}$. 

    By \cite[Lemma 2.6]{Bridgeland12} and Lemma \ref{Upsilon(Aut)}, we have
    \begin{align*}
        \Upsilon(\mathrm{GL}_r(\mathbb{C})) &= (-t)^{r^2+\frac{r(r-1)}{2}}((-t)-(-t)^{-1})^r [r]!,\\
        \frac{\Upsilon(\mathrm{Aut}_{\mathcal{C}_2(\mathcal{A})}(C_{S_i^{\oplus (k-r)}} \oplus C_{S_i^{\oplus (l-r)}}^*)) }{\Upsilon(\mathrm{Aut}_{\mathcal{C}_2(\mathcal{A})}(C_{S_i^{\oplus k}})\Upsilon(\mathrm{Aut}_{\mathcal{C}_2(\mathcal{A})}(C_{S_i^{\oplus l}}^*))  } &= \frac{(-t)^{3r^2
        +r-3rk-3rl} }{((-t)-(-t)^{-1})^{2r}} (\begin{bmatrix}
            k \\ r
        \end{bmatrix})^{-1}
        (\begin{bmatrix}
            l \\ r
        \end{bmatrix})^{-1} ([r]!)^{-2}.
    \end{align*}    
    By Proposition \ref{KX=XK} and Corollary \ref{bAbB}, we have
    \begin{align*}
        &\Upsilon(\mathrm{Aut}_{\mathcal{C}_2(\mathcal{A})}(Z_{\xi}^{(r)})) \delta_{[Z_{\xi}^{(r)}]} \\
        =& (-t)_{(i)}^{r(l-k)}\Upsilon(\mathrm{Aut}_{\mathcal{C}_2(\mathcal{A})}(C_{S_i^{\oplus (k-r)}} \oplus C_{S_i^{\oplus (l-r)}}^*)) \delta_{[C_{S_i^{\oplus (k-r)}} \oplus C_{S_i^{\oplus (l-r)}}^*]} * (b_{S_i}^*)^{*r}
    \end{align*}
    using $ \langle C_{S_i^{\oplus (k-r)}} \oplus C_{S_i^{\oplus (l-r)}}^*,K_{S_i^{\oplus r}}^*\rangle = r(l-r)\langle \widehat{S_i},\widehat{S_i} \rangle  $ and $ \langle \widehat{S_i},\widehat{S_i} \rangle = 1-g_i $.

    Note that  $ \mathrm{Ext}^1_{\mathcal{C}_2(\mathcal{A})}(C_{S_i^{\oplus k}}, C_{S_i^{\oplus l}}^* )_{Z_{\xi}^{(r)}} \subseteq \mathbb{C}^{kl} $ can be viewed as a locally closed subset consisting of $ k \times l $ complex matrices of rank $ r $, then there exists a principle $ \mathrm{GL}_r(\mathbb{C}) $-bundle $ p: \mathrm{Ext}^1_{\mathcal{C}_2(\mathcal{A})}(C_{S_i^{\oplus k}}, C_{S_i^{\oplus l}}^* )_{Z_{\xi}^{(r)}} \to \mathrm{Gr}(r,k) \times  \mathrm{Gr}(r,l) $, where $ \mathrm{Gr}(r,k), \mathrm{Gr}(r,l) $ denote the $ r $-Grassmannians of $ \mathbb{C}^k, \mathbb{C}^l $, respectively, sending a matrix to its column space and row space. 
    Also, using the Schubert cell decomposition of a Grassmannian, we have
    \begin{align*}
        \Upsilon(\mathrm{Gr}(r,k)) = \prod_{j=1}^{r} \frac{t^{2(k-r+j)}-1}{t^{2j}-1} = (-t)^{r(k-r)}\begin{bmatrix}
            k \\ r
        \end{bmatrix}.
    \end{align*}

    Hence, 
    \begin{align*}
         \delta_{[C_{S_i^{\oplus k}}]} * \delta_{[C_{S_i^{\oplus l}}^*]}=&\sum_{r=0}^{\mathrm{min}\{k,l\}} (-t)_{(i)}^{r(l-k)} \Upsilon(\mathrm{GL}_r(\mathbb{C})) \Upsilon(\mathrm{Gr}(r,k)) \Upsilon(\mathrm{Gr}(r,l))\\
        &\frac{\Upsilon(\mathrm{Aut}_{\mathcal{C}_2(\mathcal{A})}(C_{S_i^{\oplus (k-r)}} \oplus C_{S_i^{\oplus (l-r)}}^*)) }{\Upsilon(\mathrm{Aut}_{\mathcal{C}_2(\mathcal{A})}(C_{S_i^{\oplus k}})\Upsilon(\mathrm{Aut}_{\mathcal{C}_2(\mathcal{A})}(C_{S_i^{\oplus l}}^*))  }\delta_{[C_{S_i^{\oplus (k-r)}} \oplus C_{S_i^{\oplus (l-r)}}^*]} * (b_{S_i}^*)^{*r} \\
        =& \sum_{r=0}^{\mathrm{min}\{k,l\}} (-t)_{(i)}^{r(l-k)} \frac{t^{-2r(k+l-r)}}{\varphi_r(t^{-2})} \delta_{[C_{S_i^{\oplus (k-r)}} \oplus C_{S_i^{\oplus (l-r)}}^*]} * (b_{S_i}^{-1})^{*r}.
    \end{align*}

    For the second equality, it suffices to observe that $ \langle C_{S_i^{\oplus (k-r)}} \oplus C_{S_i^{\oplus (l-r)}}^*,K_{S_i^{\oplus r}}\rangle = r(k-r)\langle \widehat{S_i},\widehat{S_i} \rangle  $ and thus $ -r(l+k-2r) + 2 r(k-r) = -r(l-k)  $. 
    The remaining computations are the same as that in the first equality. 
\end{proof}

The following lemma is entirely analogous to the finite field case, using Lemma \ref{eikfil} and induction on $ k $.

\begin{lemma}[{\itshape\cite[Lemma 3.3]{L23}}]\;
     \label{Commutation relation}
     
     For any $ k,l \geqslant 1 $, in $ \mathcal{SDH}_t^{\mathrm{red}}(\mathcal{A}) $, we have
     \begin{align*}
         &\sum_{r=0}^{\mathrm{min}\{k,l\}} (-t)_{(i)}^{r(l-k)} \frac{t^{-2r(k+l)+2r(r+1)}}{\varphi_r(t^2)} \delta_{[C_{S_i^{\oplus (k-r)}}]} * \delta_{[C_{S_i^{\oplus (l-r)}}^*]} * (b_{S_i}^{-1})^{*r} \\
         =&\sum_{r=0}^{\mathrm{min}\{k,l\}} (-t)_{(i)}^{{-r(l-k)}} \frac{t^{-2r(k+l)+2r(r+1)}}{\varphi_r(t^2)} \delta_{[C_{S_i^{\oplus (l-r)}}^*]} * \delta_{[C_{S_i^{\oplus (k-r)}}]} * (b_{S_i})^{*r}. 
     \end{align*}

\end{lemma}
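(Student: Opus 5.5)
We prove the identity by substituting the two expansions of Lemma \ref{eikfil} into each side and comparing coefficients in the basis supplied by the triangular decomposition (Proposition \ref{Triangular decomposition of SDH_t}).

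Write $u_{a,b}=\delta_{[C_{S_i^{\oplus a}} \oplus C_{S_i^{\oplus b}}^*]}$ and $q=t^2$. By Proposition \ref{Triangular decomposition of SDH_t}, the elements $u_{a,b} * b_{\alpha}$ are linearly independent, so it suffices to expand both sides in this form and compare coefficients of each $u_{k-p,\,l-p} * b_{S_i}^{*n}$ for $n\in\mathbb{Z}$.

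\textbf{Expanding the left-hand side.} For each $r$, expand $\delta_{[C_{S_i^{\oplus (k-r)}}]} * \delta_{[C_{S_i^{\oplus (l-r)}}^*]}$ using the first formula of Lemma \ref{eikfil} with $(k,l)$ replaced by $(k-r,l-r)$ and summation index $s$. The result is a double sum over $r,s$ of terms proportional to $u_{k-r-s,\,l-r-s} * (b_{S_i}^{-1})^{*(r+s)}$. Here we use that $b_{S_i}$ has degree $0$ difference in the sense of Corollary \ref{balphaalpha'}: commuting $b_{S_i}^{\pm 1}$ past $u$ produces a power $(-t)^{(\widehat{S_i},\cdot)}$, but in this expansion the factors already sit on the right, so no reordering is needed. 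Setting $p=r+s$, the left-hand side becomes $\sum_p L_p\, u_{k-p,l-p} * (b_{S_i}^{-1})^{*p}$, where
\[
L_p=\sum_{r+s=p} (-t)_{(i)}^{(r+s)(l-k)}\frac{t^{-2r(k+l)+2r(r+1)}}{\varphi_r(t^2)}\cdot\frac{t^{-2s(k+l-2r-s)}}{\varphi_s(t^{-2})}.
\]

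\textbf{Expanding the right-hand side.} The same procedure with the second formula of Lemma \ref{eikfil} gives $\sum_p R_p\, u_{k-p,l-p} * b_{S_i}^{*p}$, with $R_p$ identical to $L_p$ except that $(-t)_{(i)}^{(l-k)}$ is replaced by its inverse.

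\textbf{Comparing coefficients.} The $p=0$ terms agree, both being $u_{k,l}$. For $p\geqslant 1$ the powers of $b_{S_i}$ have opposite signs, $-p$ on the left and $+p$ on the right. By linear independence of the $b_\alpha$ in the triangular decomposition, the identity therefore holds exactly when $L_p=R_p=0$ for all $p\geqslant 1$. After factoring out $(-t)_{(i)}^{\pm p(l-k)}$ and the common factor $t^{-2p(k+l)}$, this reduces to the $q$-identity
\[
\sum_{r+s=p}\frac{q^{r(r+1)}}{\varphi_r(q)}\cdot\frac{q^{s^2+2rs}}{\varphi_s(q^{-1})}=0 \qquad (p\geqslant 1).
\]
Using $\varphi_s(q^{-1})=(-1)^s q^{-s(s+1)/2}\varphi_s(q)$ and multiplying through by $\varphi_p(q)$, this becomes a sum of $q$-binomial coefficients with signs and powers of $q$. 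It should follow from the $q$-binomial theorem $\prod_{j=0}^{p-1}(1+q^j x)=\sum_r q^{r(r-1)/2}\binom{p}{r}_q x^r$ evaluated at a suitable power $x=-q^{c}$ so that one factor vanishes. This is the Gaussian identity behind \cite[Lemma 3.3]{L23}.

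\textbf{Main obstacle.} The step most likely to fail is the exponent bookkeeping in the last reduction: the precise powers of $q$ must match a vanishing instance of the $q$-binomial theorem. If the direct check does not close, the fallback is induction on $k$, as the paper indicates. Multiply the $(k-1)$ case on the left by $\delta_{[C_{S_i}]}$, use $\delta_{[C_{S_i}]} * \delta_{[C_{S_i^{\oplus(m-1)}}]}=c_m\,\delta_{[C_{S_i^{\oplus m}}]}$ in $\mathcal{H}_t^{\mathrm{tw}}(\mathcal{A})$, and use the case $k=1$ of Lemma \ref{eikfil} to commute $\delta_{[C_{S_i}]}$ past the $C^*$ factors. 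Either route reduces to a finite identity of rational functions in $t$, which one verifies directly.
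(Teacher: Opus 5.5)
Your argument is correct, but it follows a different route from the paper's. The paper only says the lemma follows from Lemma~\ref{eikfil} by induction on $k$, modeled on the finite-field argument of \cite{L23}. You instead substitute Lemma~\ref{eikfil} (with $(k,l)$ replaced by $(k-r,l-r)$) into every term of both sides at once and regroup by $p=r+s$. You then show that every coefficient with $p\geqslant 1$ vanishes, so both sides equal the single element $\delta_{[C_{S_i^{\oplus k}}\oplus C^*_{S_i^{\oplus l}}]}$. This is a stronger and more transparent statement than the identity itself.

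The step you flagged as the main obstacle does close. Put $r=p-s$ and use $\varphi_s(q^{-1})=(-1)^sq^{-s(s+1)/2}\varphi_s(q)$. The total exponent $r(r+1)+s^2+2rs+s(s+1)/2$ then equals $p^2+p+s(s-1)/2$. After multiplying by $\varphi_p(q)$, your identity reads
\[
q^{p^2+p}\sum_{s=0}^{p}(-1)^s q^{\frac{s(s-1)}{2}}\frac{\varphi_p(q)}{\varphi_s(q)\varphi_{p-s}(q)}=q^{p^2+p}\prod_{j=0}^{p-1}(1-q^j)=0 \qquad (p\geqslant 1).
\]
This is the $q$-binomial theorem at $x=-1$ (so $c=0$ in your notation), and the factor $j=0$ kills the product. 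The same vanishing handles $R_p$, since it differs from $L_p$ only by the prefactor $(-t)_{(i)}^{-p(l-k)}$.

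Two minor corrections. First, you do not need linear independence. You only use the ``if'' direction, and once $L_p=R_p=0$ for $p\geqslant1$ both sides are literally equal. Moreover, the elements $\delta_{[C_{S_i^{\oplus a}}\oplus C^*_{S_i^{\oplus b}}]}*b_\alpha$ are not literally the tensor basis of Proposition~\ref{Triangular decomposition of SDH_t}, so independence would need an extra triangularity argument anyway. Second, your fallback would fail as written for $i\in\mathbb{I}^{\mathrm{im}}$. There $\mathrm{Ext}^1_{\mathcal{A}}(S_i,S_i)\neq 0$ because of the loops, so $\delta_{[C_{S_i}]}*\delta_{[C_{S_i^{\oplus(m-1)}}]}$ also contains non-semisimple terms. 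Since the direct computation is complete, this does not matter.

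Your route uses nothing beyond Lemma~\ref{eikfil} and a classical $q$-identity. An induction on $k$ needs additional structure constants in $\mathcal{H}^{\mathrm{tw}}_t(\mathcal{A})$ and commutations of $\delta_{[C_{S_i}]}$ past $C^*$-terms and powers of $b_{S_i}$. What the induction offers in return is a closer parallel with the finite-field proof in \cite{L23}.
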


Inspired by \cite[Theorem 3.4]{L23} and \cite[Theorem 3.2]{LLR23}, we get a motivic version:

\begin{theorem}
    \label{Realization of quantum BB algebra}
    There is an injective homomorphism of $ \mathbb{C}(t) $-algebras
    \[\mathcal{S}\Phi_t : U_{-t}(\mathcal{G}_Q) \hookrightarrow \mathcal{SDH}^{\mathrm{red}}_t(\mathcal{A}) \]
    given by
    \begin{align*}
        e_{il} \mapsto (-t)^{l^2-l} \delta_{[C_{S_i^{\oplus l}}]},\;f_{il} &\mapsto (-t)^{l^2-l} \delta_{[C^*_{S_i^{\oplus l}}]},\; K_i \mapsto b_{S_i} ,\; K_i^{-1} \mapsto b_{S_i}^* \text{ for } (i,l) \in \mathbb{I}^{\infty}.
    \end{align*}
\end{theorem}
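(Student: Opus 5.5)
The proof has two parts. First I show that $\mathcal{S}\Phi_t$ is well defined, i.e. that the images satisfy \eqref{Quantum Relation 1}--\eqref{Quantum Relation 7}. Then I show injectivity using the triangular decomposition of Proposition \ref{Triangular decomposition of SDH_t}.

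\emph{Relations.} Relation \eqref{Quantum Relation 1} follows from Corollary \ref{bAbB} and Corollary \ref{balphaalpha'}, since $b_{S_i}*b_{S_i}^*=1$ in the reduced quotient. Relation \eqref{Quantum Relation 2} follows from the commutation formula in Corollary \ref{balphaalpha'}. For $\delta_{[C_{S_j^{\oplus l}}]}$ we have $\beta^0-\beta^1=l\widehat{S_j}$, which gives the factor $(-t)^{l a_{ij}}$; for $C^*$ the sign is reversed.

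Relations \eqref{Quantum Relation 3}--\eqref{Quantum Relation 5} only involve the positive (respectively negative) part. Via $I^\pm_*$ they reduce to identities in $\mathcal{H}^{\mathrm{tw}}_t(\mathcal{A})$, and these are the motivic versions of the quantum Serre and commutation relations of the composition algebra. I would verify them as in \cite{L23}, \cite{Kang18}, using the motivic Riedtmann--Peng formula (Remark \ref{Motivic Riedtmann-Peng (XY)}) with $q$ replaced by $t^2$ as in Remark \ref{q = t^2}(2). The computations only count points of Grassmannian-type and extension-space strata, whose virtual Poincaré polynomials are the same polynomials in $t^2$ as the finite-field counts in $q$.

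Relation \eqref{Quantum Relation 6} holds because $\mathrm{Ext}^1_{\mathcal{C}_2(\mathcal{A})}(C_{S_i^{\oplus k}},C^*_{S_j^{\oplus l}})\cong \mathrm{Hom}_{\mathcal{A}}(S_i^{\oplus k},S_j^{\oplus l})=0$ for $i\neq j$, and symmetrically. Both products are therefore the single class of the direct sum, and the twist exponents agree since $\langle \widehat{S_i},\widehat{S_j}\rangle$ enters symmetrically in the two component-wise forms (one term for $C$, one for $C^*$). Relation \eqref{Quantum Relation 7} is obtained by inserting the normalization $(-t)^{l^2-l}$ into Lemma \ref{Commutation relation} and matching coefficients with $\nu_{ir}=1/\varphi_r(t^2)$. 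This bookkeeping of powers of $-t$ (the factors $t^{-2r(k+l)+2r(r+1)}$, $(-t)_{(i)}^{\pm r(l-k)}$ and the normalizations $(-t)^{k^2-k}$ etc.) is routine but error-prone, and I would check it carefully for $r=0,1$ first.

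\emph{Injectivity.} This is the main obstacle. Both sides carry triangular decompositions. On the algebra side, $U_{-t}(\mathcal{G}_Q)\cong U^+\otimes \mathbb{C}(t)[K_i^{\pm1}]\otimes U^-$ by \cite{Bozec16}, \cite{FKKB21}. On the Hall side, Proposition \ref{Triangular decomposition of SDH_t} gives $\mathcal{SDH}^{\mathrm{red}}_t(\mathcal{A})\cong\mathcal{H}^{\mathrm{tw}}_t(\mathcal{A})\otimes\mathbb{C}(t)[K(\mathcal{A})]\otimes\mathcal{H}^{\mathrm{tw}}_t(\mathcal{A})$. The map $\mathcal{S}\Phi_t$ respects these decompositions, sending $K_i$ to $b_{S_i}$. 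The Cartan part $\mathbb{C}(t)[K_i^{\pm1}]\to\mathbb{C}(t)[K(\mathcal{A})]$ is an isomorphism, because the classes $\widehat{S_i}$ form a basis of $K(\mathcal{A})$.

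It therefore suffices to show that $U^+_{-t}(\mathcal{G}_Q)\to\mathcal{H}^{\mathrm{tw}}_t(\mathcal{A})$, $e_{il}\mapsto(-t)^{l^2-l}\delta_{[S_i^{\oplus l}]}$, is injective. I would argue this as in \cite{Kang18}, \cite{L23}. The image is the generic composition algebra, and the map to it is an isomorphism because of nondegeneracy of Green's form on $U^+$ in Bozec's theory. The motivic form is compatible with this: the structure constants in the image are polynomials in $t^2$ that agree with the finite-field structure constants at $q=t^2$. Hence a kernel element over $\mathbb{C}(t)$ would give kernel elements for infinitely many $q$, which contradicts the finite-field injectivity of \cite{L23}. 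If this specialization argument needs more care, I would instead define a motivic Green pairing on $\mathcal{H}^{\mathrm{tw}}_t(\mathcal{A})$ and show directly that it restricts to Bozec's nondegenerate form.
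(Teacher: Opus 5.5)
Your outline is the paper's proof. Relations \eqref{Quantum Relation 1}, \eqref{Quantum Relation 2} and \eqref{Quantum Relation 7} come from Corollary \ref{balphaalpha'} (equivalently \ref{KACB}) and Lemma \ref{Commutation relation}. Relation \eqref{Quantum Relation 6} comes from vanishing between the $C$- and $C^*$-pieces. The two triangular decompositions then reduce the rest to \eqref{Quantum Relation 3}--\eqref{Quantum Relation 5} and to injectivity on the halves, which the paper settles by transporting \cite[Proposition 3.9, Theorem 4.2]{Kang18} through Remark \ref{q = t^2}(2).

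The bookkeeping for \eqref{Quantum Relation 7} does close up. Since $(k-r)^2-(k-r)+(l-r)^2-(l-r)=k^2-k+l^2-l-2r(k+l)+2r(r+1)$, the normalizations $(-t)^{l^2-l}$ absorb $t^{-2r(k+l)+2r(r+1)}$ up to an $r$-independent factor. This leaves exactly $\nu_{ir}=1/\varphi_r(t^2)$, with $k,l$ swapped relative to Lemma \ref{Commutation relation}.

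For \eqref{Quantum Relation 6}, fix the justification. The two twist exponents agree because both are $0$: $C_{S_i^{\oplus k}}$ has degree $(0,k\widehat{S_i})$ and $C^*_{S_j^{\oplus l}}$ has degree $(l\widehat{S_j},0)$. It is not that $\langle\widehat{S_i},\widehat{S_j}\rangle$ enters symmetrically; it could not, since $\langle\widehat{S_i},\widehat{S_j}\rangle=-c_{ij}\neq -c_{ji}$ in general. Besides $\mathrm{Ext}^1=0$, you also need $\mathrm{Hom}=0$ in both directions. This is automatic here and is what the paper cites; it ensures both products are the same multiple of $\delta_{[C_{S_i^{\oplus k}}\oplus C^*_{S_j^{\oplus l}}]}$.

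The step that does not hold up is your primary argument for injectivity of $U^+_{-t}(\mathcal{G}_Q)\to\mathcal{H}^{\mathrm{tw}}_t(\mathcal{A})$ by specialization to finite fields. There is no comparison map from $\mathcal{H}^{\mathrm{tw}}_t(\mathcal{A})$ to the Hall algebras over $\mathbb{F}_q$. For example, take any two elliptic curves $E,E'$. By relation (3) of Definition \ref{Scissor relation}, both $[E\to\mathrm{Spec}\,\mathbb{C}\xrightarrow{i_{[M]}}\mathcal{R}]$ and $[E'\to\mathrm{Spec}\,\mathbb{C}\xrightarrow{i_{[M]}}\mathcal{R}]$ equal $\Upsilon(E)\,\delta_{[M]}$, yet their reductions have different point counts. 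So a relation among motivic classes does not yield the corresponding relation among $\mathbb{F}_q$-counts.

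The fibres that matter here are quiver flag varieties of individual representations, which in general are not polynomial-count. Moreover, nilpotent representations of a quiver with loops come in continuous families. Hence $\mathcal{S}\Phi_t(e_{i_1l_1}\cdots e_{i_nl_n})$ is the class of a flag stack over a positive-dimensional family, not a finite combination of point classes. There are therefore no ``structure constants in $t^2$'' in an isoclass basis to match with those at $q$. The claim that a kernel element over $\mathbb{C}(t)$ yields kernel elements for infinitely many $q$ is unjustified.

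Your fallback is the right route, and it is what the paper's appeal to \cite[Theorem 4.2]{Kang18} via Remark \ref{q = t^2}(2) amounts to. Use a motivic Green pairing, e.g.\ $([\mathcal{F}\to\mathcal{R}],[\mathcal{G}\to\mathcal{R}])\mapsto\Upsilon(\mathcal{F}\times_{\mathcal{R}}\mathcal{G})$. It is well defined on $K_\Upsilon$, so it kills the image of any kernel element. Then prove a motivic Green formula (skew-derivation property) showing that the pairing pulls back to Bozec's form $(\cdot,\cdot)_L$ up to normalization. Nondegeneracy of $(\cdot,\cdot)_L$ on $U^+$ then gives injectivity. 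The real content is that Green-formula step, not any comparison with finite fields.
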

\begin{proof}
    First, $ \mathcal{S}\Phi_t $ preserves the relations  (\ref{Quantum Relation 1}), (\ref{Quantum Relation 2}) and (\ref{Quantum Relation 7}) by Corollary \ref{balphaalpha'}, Corollary \ref{KACB} and Lemma \ref{Commutation relation}, respectively. 
    
    For relation (\ref{Quantum Relation 6}), if $ i \ne j $, then $ \mathrm{Hom}_{\mathcal{C}_2(\mathcal{A})}(C_{S_i^{\oplus k}}, C_{S_j^{\oplus l}}^*) = 0 = \mathrm{Hom}_{\mathcal{C}_2(\mathcal{A})}(C_{S_j^{\oplus l}}^*,C_{S_i^{\oplus k}}) $, thus by Lemma \ref{Upsilon(Aut)}, we have $ \delta_{[C_{S_i^{\oplus k}}]} * \delta_{[C_{S_j^{\oplus l}}^*]} = \delta_{[C_{S_i^{\oplus k}} \oplus C_{S_j^{\oplus l}}^*]}  = \delta_{[C_{S_j^{\oplus l}}^*]} * \delta_{[C_{S_i^{\oplus k}}]}  $. 

    Note that by Proposition \ref{Triangular decomposition of SDH_t}, $ \mathcal{SDH}_t^{\mathrm{red}}(\mathcal{A}) $ admits a triangular decomposition 
    \[\mathcal{SDH}^{\mathrm{red}}_t(\mathcal{A}) \cong \mathcal{H}_t^{\mathrm{tw}}(\mathcal{A}) \otimes \mathbb{C}(t)[b_{S_i}^{\pm 1}\mid i \in \mathbb{I}]  \otimes \mathcal{H}_t^{\mathrm{tw}}(\mathcal{A})\]
    since $ K( \mathrm{rep}^{\mathrm{nil}}_{\mathbb{C}}(Q)) \cong \mathbb{Z}\mathbb{I} $, while $ U_{-t}(\mathcal{G}_Q) $ admits a triangular decomposition by \cite[Theorem 3.2]{KangKim20}. 
    Then it remains to show that $ \mathcal{S}\Phi_t $ preserves relation (\ref{Quantum Relation 5}) and the quantum Serre relation (\ref{Quantum Relation 3}) (resp. (\ref{Quantum Relation 4})), and is injective on the positive (resp. negative) half. 
    The proof is the same as the proof of \cite[Proposition 3.9, Theorem 4.2]{Kang18} over finite fields, following Remark \ref{q = t^2} $ (2) $. 
   
\end{proof}

\subsection{Classical limit of \texorpdfstring{$ \mathcal{SDH}_t^{\mathrm{red}}(\mathcal{A}) $}{SDHtredA}}\;
\label{Classical limit of SDHt}

Inspired by the process of taking the classical limit of $ U_{-t}(\mathcal{G}_Q) $ at $ t=-1 $ in \cite{FKKB21} and the construction in \cite[Section 6.1]{FLX24}, we are going to construct a $ \mathbb{C} $-algebra from $ \mathcal{C}_2(\mathcal{A}) $ of $ \mathcal{A}= \mathrm{rep}^{\mathrm{nil}}_{\mathbb{C}}(Q) $ using the motivic semi-derived Ringel-Hall algebra $ \mathcal{SDH}_t^{\mathrm{red}}(\mathcal{A}) $.

Note that the Chevalley generators $ e_{il}, f_{il} \;((i,l) \in \mathbb{I}^{\infty}) $ have a complicated commutation relation (\ref{Quantum Relation 7}). 
Thanks to Bozec, there are primitive generators $ s_{il}, t_{il} \;((i,l) \in \mathbb{I}^{\infty}) $ with simpler relations. 
Set 
\[\tau_{il} = (s_{il},s_{il})_L = (t_{il},t_{il})_L ,\]
where $ (\cdot,\cdot)_L $ is the unique symmetric bilinear form given by \cite[Proposition 2.2]{Bozec15} and \cite[Proposition 3.2]{Bozec16}. 

Let \[\mathbb{C}_{-1} = \mathbb{C}[t]_{(t+1)} = \{\frac{f(t)}{g(t)} \in \mathbb{C}(t) \mid f(t),g(t) \in \mathbb{C}[t],g(-1) \neq 0 \}\] be the localization of $ \mathbb{C}[t] $ at the prime ideal $ (t+1) $, which is a local ring with a unique maximal ideal $ (t+1)\mathbb{C}_{-1}  $. 
Consider the evaluation morphism
\[\pi: \mathbb{C}_{-1} \to \mathbb{C}, \frac{f(t)}{g(t)} \mapsto \frac{f(-1)}{g(-1)}\]
which induces an isomorphism $ \mathbb{C}_{-1}/(t+1)\mathbb{C}_{-1} \cong \mathbb{C} $. 
Note that  \[ \pi (\Upsilon(\mathcal{F})) = \chi^{\mathrm{na}}(\mathcal{F}(\mathbb{C}))\]
for finite type algebraic stack $ \mathcal{F} $ with affine geometric stabilizers, where $ \chi^{\mathrm{na}}(\mathcal{F}(\mathbb{C})) $ is the naive Euler characteristic of $ \mathcal{F} $, see \cite[Section 5.1] {FLX24} or \cite[Example 2.12]{Joyce07} for details. 
For simplicity, denote it by $ \chi(\mathcal{F}) $ in the following.

\begin{definition}\;
    \label{Classical limit of SDH}
    \begin{enumerate}
        \item Define the $ \mathbb{C}_{-1} $ form $ \mathcal{SDH}_t^{\mathrm{red}}(\mathcal{A})_{\mathbb{C}_{-1}} $ to be the $ \mathbb{C}_{-1} $-subalgebra of $ \mathcal{SDH}_t^{\mathrm{red}}(\mathcal{A}) $ generated by elements of the forms
    \begin{align*}
    [\mathcal{F} \xrightarrow{f}[\mathcal{O}/G_{\underline{\beta}}]\hookrightarrow \mathcal{M}],\frac{1}{\tau_{il}(t^2-1)}[\mathcal{F} \xrightarrow{f}[\mathcal{O}^*/G_{\underline{\beta}^*}] \hookrightarrow \mathcal{M}],\;\;b_{\alpha},\;\frac{b_{\alpha}-1}{-t-1},
    \end{align*}
    where $ \mathcal{O} \subseteq C_2(Q, \underline{\beta}) $ is a positive $ G_{\underline{\beta}} $-invariant constructible subset, $\underline{\beta} \in \mathbb{N}\mathbb{I} \times \mathbb{N}\mathbb{I} $, $ (i,l) \in \mathbb{I}^{\infty} $, and $\alpha \in K(\mathcal{A}) $. 

    \item Define the classical limit of $ \mathcal{SDH}_t^{\mathrm{red}}(\mathcal{A})_{\mathbb{C}_{-1}} $ at $ t=-1 $ to be the $ \mathbb{C} $-algebra
    \[\mathcal{SDH}_{-1}^{\mathrm{red}}(\mathcal{A}) = \mathbb{C}_{-1}/(t+1)\mathbb{C}_{-1} \otimes_{\mathbb{C}_{-1}} \mathcal{SDH}_t^{\mathrm{red}}(\mathcal{A})_{\mathbb{C}_{-1}}.\]        
    \end{enumerate}
        
\end{definition}

There is a canonical $ \mathbb{C} $-linear map
    \begin{align*}
        {}^{-}:\mathcal{SDH}_t^{\mathrm{red}}(\mathcal{A})_{\mathbb{C}_{-1}} &\to  \mathcal{SDH}_{-1}^{\mathrm{red}}(\mathcal{A})\\
        \sum \frac{f(t)}{g(t)} a_1 * \cdots * a_s &\mapsto 1 \otimes \sum \frac{f(t)}{g(t)} a_1 * \cdots * a_s =\sum \frac{f(-1)}{g(-1)} \otimes a_1 * \cdots * a_s, 
    \end{align*}
    where $  \frac{f(t)}{g(t)} \in \mathbb{C}_{-1} $ and $ a_1, \cdots ,a_s $ are generators in $ \mathcal{SDH}_t^{\mathrm{red}}(\mathcal{A})_{\mathbb{C}_{-1}}  $. 
    
For any $ \alpha \in K(\mathcal{A}) $, set
\begin{align*}
    h_{\alpha} =(\frac{b_{\alpha}-1}{-t-1} )^- =1 \otimes  \frac{b_{\alpha}-1}{-t-1} \in \mathcal{SDH}_{-1}^{\mathrm{red}}(\mathcal{A}). 
\end{align*}

Using Corollary \ref{balphaalpha'}, after a similar computation as in \cite[Lemma 6.3]{FLX24}, we obtain the following relations in $ \mathcal{SDH}_{-1}^{\mathrm{red}}(\mathcal{A}) $.

\begin{lemma}
    \label{Properties of minus}
    For any $ \alpha,\alpha' \in K(\mathcal{A}) $ and $ x=[\mathcal{F} \xrightarrow{f} \mathcal{M}_{\underline{\beta}} \hookrightarrow \mathcal{M} ] \in \mathcal{SDH}_t^{\mathrm{red}}(\mathcal{A})_{\mathbb{C}_{-1}} $, we have
    \begin{align*}
    \overline{b_{\alpha}} =1,\; [h_{\alpha},h_{\alpha'}] = 0,\; h_{\alpha} + h_{\alpha'} = h_{\alpha+\alpha'},  [h_{\alpha},\overline{x}] = (\alpha,\beta^0-\beta^1)\overline{x},
    \end{align*}
    where $ \underline{\beta} = (\beta^1,\beta^0) \in \mathbb{N}\mathbb{I} \times \mathbb{N}\mathbb{I} $.
\end{lemma}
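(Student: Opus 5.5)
The plan is to compute in $\mathcal{SDH}_t^{\mathrm{red}}(\mathcal{A})_{\mathbb{C}_{-1}}$ using Corollary \ref{balphaalpha'} and then reduce modulo $(t+1)$ via the map ${}^{-}$. Since ${}^{-}$ is induced by the ring map $\pi:\mathbb{C}_{-1}\to\mathbb{C}$ tensored with the $\mathbb{C}_{-1}$-algebra $\mathcal{SDH}_t^{\mathrm{red}}(\mathcal{A})_{\mathbb{C}_{-1}}$, it is a $\mathbb{C}$-algebra homomorphism. In particular $\overline{a*b}=\overline{a}\,\overline{b}$ and $\overline{c\,a}=\pi(c)\overline{a}$ for $c\in\mathbb{C}_{-1}$. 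Throughout, set $u=-t-1$, so that $-t=1+u$ and $(t+1)\mathbb{C}_{-1}=u\mathbb{C}_{-1}$.

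First, $b_\alpha$ and $\frac{b_\alpha-1}{u}$ both lie in the $\mathbb{C}_{-1}$-form, and $b_\alpha-1=u\cdot\frac{b_\alpha-1}{u}$. Hence $\overline{b_\alpha}-1=\pi(u)h_\alpha=0$, which gives $\overline{b_\alpha}=1$.

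For additivity, I would use $b_{\alpha+\alpha'}=b_\alpha*b_{\alpha'}$ from Corollary \ref{balphaalpha'} to write
\[
\frac{b_{\alpha+\alpha'}-1}{u}=\frac{b_\alpha-1}{u}*b_{\alpha'}+\frac{b_{\alpha'}-1}{u}.
\]
Applying ${}^{-}$ and using $\overline{b_{\alpha'}}=1$ gives $h_{\alpha+\alpha'}=h_\alpha+h_{\alpha'}$. Commutativity $[h_\alpha,h_{\alpha'}]=0$ follows directly, since $[b_\alpha,b_{\alpha'}]=0$ implies the two elements $\frac{b_\alpha-1}{u}$ and $\frac{b_{\alpha'}-1}{u}$ already commute before reduction.

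For the last relation, set $n=(\alpha,\beta^0-\beta^1)\in\mathbb{Z}$. Corollary \ref{balphaalpha'} gives $b_\alpha*x=(1+u)^n x*b_\alpha$, and rearranging,
\[
\frac{b_\alpha-1}{u}*x-x*\frac{b_\alpha-1}{u}
=\frac{(1+u)^n-1}{u}\,x*b_\alpha .
\]
The factor $\frac{(1+u)^n-1}{u}$ lies in $\mathbb{C}_{-1}$ for every $n\in\mathbb{Z}$, including negative $n$, because $1+u=-t$ is a unit there. Its image under $\pi$ is $n$, being the derivative of $(1+u)^n$ at $u=0$. Reducing and using $\overline{b_\alpha}=1$ then yields $[h_\alpha,\overline{x}]=n\,\overline{x}$.

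The only real subtlety is that $x$ must lie in the $\mathbb{C}_{-1}$-form, so that the products above make sense there; this is part of the hypothesis. Note also that $x$ may carry the normalizing factor $\frac{1}{\tau_{il}(t^2-1)}$ as a scalar multiple, which does not change the identity because the relation is linear in $x$.
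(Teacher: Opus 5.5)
Your proof is correct and follows essentially the same route as the paper. The paper only says these relations follow from Corollary \ref{balphaalpha'} by a computation like the one in \cite[Lemma 6.3]{FLX24}, and your argument supplies exactly that computation: you write $b_\alpha-1=(-t-1)\cdot\frac{b_\alpha-1}{-t-1}$, and you check that $\frac{(-t)^n-1}{-t-1}\in\mathbb{C}_{-1}$ specializes to $n$ at $t=-1$.
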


\subsection{Enveloping algebra arising from \texorpdfstring{$ \mathcal{C}_2(\mathcal{A}) $}{C2A} }\;
\label{Enveloping algebra arising from C2A}

In this section, we construct two $ \mathbb{C} $-algebras inside $ \mathcal{SDH}_{-1}^{\mathrm{red}}(\mathcal{A}) $. 
The former follows directly from the injective homomorphism at the generalized quantum group level; the latter will be proved to coincide with  $ \mathcal{SDH}_{-1}^{\mathrm{red}}(\mathcal{A}) $.

\subsubsection{}

Recall that we have an injective homomorphism $ \mathcal{S}\Phi_t $ in Theorem \ref{Realization of quantum BB algebra} and primitive generators $ s_{il},t_{il}\;((i,l) \in \mathbb{I}^{\infty}) $, set
\begin{align*}
    \mathfrak{s}_{il} = \mathcal{S}\Phi_t(s_{il}), \mathfrak{t}_{il} = \mathcal{S}\Phi_t(t_{il}) \text{ and } \mathfrak{T}_{il} = \frac{\mathcal{S}\Phi_t(t_{il})}{\tau_{il}(t^2-1)} \in \mathcal{SDH}^{\mathrm{red}}_t(\mathcal{A}).
\end{align*}

If $ i \in \mathbb{I}^{\mathrm{re}} $, then $ l = 1 $ and $ \mathfrak{s}_{i1} = \delta_{[C_{S_i}]} \in \mathcal{SDH}_t^{\mathrm{red}}(\mathcal{A})_{\mathbb{C}_{-1}} $, $ \mathfrak{T}_{i1} = \frac{1}{\tau_{i1}(t^2-1)} \delta_{[C_{S_i}^*]} \in \mathcal{SDH}_t^{\mathrm{red}}(\mathcal{A})_{\mathbb{C}_{-1}} $. 
Let $ i \in \mathbb{I}^{\mathrm{im}} $ and $ l \geqslant 1 $. 
For the classical limit at $ t=-1 $, we use a suitable renormalization of the primitive
generators adapted to the specialization, still denoted by $ s_{il} $ and $ t_{il} $, while $ \tau_{il} $ is understood with respect to this normalization. 
Thus, without changing the notation, we may assume that $  \mathfrak{s}_{il},\ \mathfrak{T}_{il}
    \in
    \mathcal{SDH}_t^{\mathrm{red}}(\mathcal A)_{\mathbb C_{-1}} $, and that the corresponding relations have the required specialization at $ t=-1 $. 

\begin{definition}\;
\label{Definition of SU-1}

\begin{enumerate}
        \item Define $ \mathcal{SU}_t(\mathcal{A})_{\mathbb{C}_{-1}} $ to be the $ \mathbb{C}_{-1} $-subalgebra of $ \mathcal{SDH}_t^{\mathrm{red}}(\mathcal{A}) $ generated by the elements
    \begin{align*}
        \mathfrak{s}_{il}, \mathfrak{T}_{il}\;((i,l) \in \mathbb{I}^{\infty}) \text{ and } b_{\alpha},\frac{b_{\alpha}-1}{-t-1}\;(\alpha \in K(\mathcal{A})).
    \end{align*}
    
       \item  Define
    \[\mathcal{SU}_{-1}(\mathcal{A}) =  \mathbb{C}_{-1}/(t+1)\mathbb{C}_{-1} \otimes_{\mathbb{C}_{-1}} \mathcal{SU}_t(\mathcal{A})_{\mathbb{C}_{-1}}.\]
    \end{enumerate}
    \end{definition}

\begin{theorem}
    \label{SPhi to SU-1}
    There is an isomorphism of $ \mathbb{C} $-algebras
    \[ \mathcal{S}\Phi: U(\mathcal{G}_Q) \xrightarrow{\sim} \mathcal{SU}_{-1}(\mathcal{A}) \]
    given by
    \begin{align*}
        e_{il} \mapsto \overline{\mathfrak{s}_{il}},\; f_{il} \mapsto \overline{\mathfrak{T}_{il}},\; h_i \mapsto h_{\widehat{S_i}} \text{ for } (i,l) \in \mathbb{I}^{\infty}.
    \end{align*}    
\end{theorem}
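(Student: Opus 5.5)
The proof has three parts: the relations, surjectivity, and injectivity. The map $\mathcal{S}\Phi$ is obtained by specializing the injective homomorphism $\mathcal{S}\Phi_t$ of Theorem \ref{Realization of quantum BB algebra} at $t=-1$.

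First, I would present $U_{-t}(\mathcal{G}_Q)$ through Bozec's primitive generators $s_{il},t_{il}$ and $K_i^{\pm1}$. In these generators the relation (\ref{Quantum Relation 7}) becomes
\[
s_{ik}t_{jl}-t_{jl}s_{ik}=\delta_{ij}\delta_{kl}\,\tau_{il}\,\frac{K_i^{l}-K_i^{-l}}{t^{-1}-t}
\]
(or the analogue for the renormalization fixed above), while the Serre-type relations (\ref{Quantum Relation 3})--(\ref{Quantum Relation 5}) keep the same shape in terms of $s_{il},t_{il}$. Consider the $\mathbb{C}_{-1}$-form $U_{\mathbb{C}_{-1}}$ generated by $s_{il}$, $T_{il}=t_{il}/(\tau_{il}(t^2-1))$, $K_\alpha$ and $(K_\alpha-1)/(-t-1)$. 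By \cite{FKKB21} its specialization at $t=-1$ is $U(\mathcal{G}_Q)$, with $e_{il}\mapsto \overline{s_{il}}$, $f_{il}\mapsto\overline{T_{il}}$ and $h_i\mapsto \overline{(K_i-1)/(-t-1)}$.

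Second, I would check that the images satisfy the defining relations of $U(\mathcal{G}_Q)$, so that $\mathcal{S}\Phi$ is a well-defined homomorphism. For (\ref{Relation 1}) and (\ref{Relation 3}), Lemma \ref{Properties of minus} gives $[h_{\widehat{S_i}},\overline{x}]=(\widehat{S_i},\beta^0-\beta^1)\overline{x}$. For $\mathfrak{s}_{jl}$ one has $\beta^0-\beta^1=l\widehat{S_j}$, and for $\mathfrak{T}_{jl}$ one has $-l\widehat{S_j}$; together with $(\widehat{S_i},\widehat{S_j})=a_{ij}$ this gives (\ref{Relation 3}). Relations (\ref{Relation 4})--(\ref{Relation 6}) come from specializing the images under $\mathcal{S}\Phi_t$ of (\ref{Quantum Relation 3})--(\ref{Quantum Relation 5}), since the quantum binomials $\begin{bmatrix} m\\ k\end{bmatrix}$ evaluate to ordinary binomials at $-t=1$. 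For (\ref{Relation 2}), I would apply $\mathcal{S}\Phi_t$ to the primitive commutator relation and divide by $\tau_{il}(t^2-1)$. This gives $[\mathfrak{s}_{ik},\mathfrak{T}_{jl}]=\delta_{ij}\delta_{kl}\,c(t)\,(b_{l\widehat{S_i}}-b_{-l\widehat{S_i}})/(-t-1)$ with $c(-1)=1$ under the chosen normalization. Specializing and using $h_{l\alpha}=lh_\alpha$ from Lemma \ref{Properties of minus} produces $l\,h_i$, up to the normalization constant, which is exactly what the renormalization is designed to fix.

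Third, surjectivity is immediate: $\mathcal{SU}_{-1}(\mathcal{A})$ is generated by $\overline{\mathfrak{s}_{il}}$, $\overline{\mathfrak{T}_{il}}$, $\overline{b_\alpha}=1$ and $h_\alpha=\sum_i \alpha_i h_{\widehat{S_i}}$.

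For injectivity, note that $\mathcal{S}\Phi_t$ restricts to an isomorphism $U_{\mathbb{C}_{-1}}\cong\mathcal{SU}_t(\mathcal{A})_{\mathbb{C}_{-1}}$: it is injective by Theorem \ref{Realization of quantum BB algebra}, and it matches generators. Hence $\mathcal{SU}_{-1}(\mathcal{A})\cong \mathbb{C}\otimes_{\mathbb{C}_{-1}}U_{\mathbb{C}_{-1}}$, and what remains is that this specialization is exactly $U(\mathcal{G}_Q)$ rather than a proper quotient or a larger algebra.

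The main obstacle is to show that $U_{\mathbb{C}_{-1}}$ is free over $\mathbb{C}_{-1}$ with a PBW-type basis compatible with the triangular decomposition, and that the specialization is $U(\mathcal{G}_Q)$, as in \cite{FKKB21}. I would use the triangular decomposition of $U_{-t}(\mathcal{G}_Q)$ from \cite{KangKim20}, together with Proposition \ref{Triangular decomposition of SDH_t} on the Hall side. For the Cartan part, the span of monomials in $(K_\alpha-1)/(-t-1)$ specializes to the polynomial algebra $\mathbb{C}[h_i]$. For each half, a free $\mathbb{C}_{-1}$-lattice specializes to $U(\mathfrak{n}^\pm)$ by the classical limit theorem for quantum Borcherds--Bozec algebras; this is where the renormalization of $s_{il}$ and $t_{il}$ for imaginary $i$ enters. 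Dimension counts in each weight space then give injectivity. The step I would examine most carefully is the interaction between the renormalized primitive generators and the factor $\tau_{il}(t^2-1)$, to ensure no torsion appears.
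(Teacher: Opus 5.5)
Your proposal is correct and follows the paper's proof. You check the relations via Lemma~\ref{Properties of minus} and the primitive-generator relations of \cite{FKKB21}. You then observe that the injective $\mathcal{S}\Phi_t$ identifies $U_{-t}(\mathcal{G}_Q)_{\mathbb{C}_{-1}}$ with $\mathcal{SU}_t(\mathcal{A})_{\mathbb{C}_{-1}}$ and invoke \cite[Theorem 4.3]{FKKB21} for the classical limit; your PBW and dimension-count discussion just unpacks that theorem.

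One small correction concerns relation (\ref{Relation 2}). The paper uses $s_{il}t_{jk}-t_{jk}s_{il}=\delta_{ij}\delta_{lk}\tau_{il}(K_i^l-K_i^{-l})$, with no $(t^{-1}-t)$ denominator; such a denominator would leave a pole at $t=-1$ after dividing by $\tau_{il}(t^2-1)$. Hence $c(t)=1/(1-t)$ and $c(-1)=\tfrac12$, not $1$. This factor $\tfrac12$ is exactly what cancels the doubling $h_{l\widehat{S_i}}-h_{-l\widehat{S_i}}=2l\,h_{\widehat{S_i}}$, giving $l\,h_{\widehat{S_i}}$.
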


\begin{proof}
    First, $ \mathcal{S}\Phi $ preserves the relations (\ref{Relation 1}) and (\ref{Relation 3}) by Lemma \ref{Properties of minus}. 
    It follows from \cite[Proposition 2.8]{FKKB21} that $  s_{il}t_{jk} - t_{jk}s_{il} = \delta_{ij}\delta_{lk} \tau_{il}(K_i^l-K_i^{-l})  $ for any $ (i,l),(j,k) \in \mathbb{I}^{\infty} $ and the quantum relations (\ref{Quantum Relation 3})-(\ref{Quantum Relation 5}) hold also for the primitive generators $ s_{il},t_{il} $ in $  U_{-t}(\mathcal{G}_Q) $. 
    Then $ \mathcal{S}\Phi $ preserves the relations (\ref{Relation 4})-(\ref{Relation 6}) using the fact that $ \overline{\begin{bmatrix}
            1-la_{ij}  \\ k
        \end{bmatrix}_{-t}} = \begin{pmatrix}
            1-la_{ij}  \\ k
        \end{pmatrix} $. 
    Also, by Lemma \ref{Properties of minus},
    \begin{align*}
        \overline{\mathfrak{s}_{il}}\overline{\mathfrak{T}_{il}} - \overline{\mathfrak{T}_{il}}\overline{\mathfrak{s}_{il}}= \delta_{ij}\delta_{lk} \otimes (\frac{b_{l\widehat{S_i}}-1}{t^2-1}-\frac{b_{-l\widehat{S_i}}-1}{t^2-1}) =  \delta_{ij}\delta_{lk} h_{l\widehat{S_i}} = l\delta_{ij}\delta_{lk} h_{\widehat{S_i}},
    \end{align*}
    then $ \mathcal{S}\Phi $ is well-defined. 
    
    Let $ U_{-t}(\mathcal{G}_Q)_{\mathbb{C}_{-1}} $ be the $ \mathbb{C}_{-1} $-algebra of $ U_{-t}(\mathcal{G}_Q) $ generated by $ s_{il}, \frac{t_{il}}{\tau_{il}(t^2-1)} \;((i,l) \in \mathbb{I}^{\infty}) $ and $ K_i, K_i^{-1}\;(i\in \mathbb{I}) $ as in \cite[Definition 3.1]{FKKB21}, then the injective homomorphism $ \mathcal{S}\Phi_t $ in Theorem \ref{Realization of quantum BB algebra} induces an isomorphism of $ \mathbb{C}_{-1} $-modules $ U_{-t}(\mathcal{G}_Q)_{\mathbb{C}_{-1}} \xrightarrow{\sim} \mathcal{SU}_t(\mathcal{A})_{\mathbb{C}_{-1}} $. 
    Taking the classical limits at $ t=-1 $ on both sides and applying \cite[Theorem 4.3]{FKKB21}, we obtain that $ \mathcal{S}\Phi $ is an isomorphism.

\end{proof}

\subsubsection{}

Recall that in Section \ref{Moduli stacks of objects and filtrations in C2A} we have defined a constructible subset $ C_2^{\mathrm{ind}}(Q,\underline{\alpha}) \subseteq C_2(Q,\underline{\alpha})  $ for any $\underline{\alpha} \in \mathbb{N}\mathbb{I} \times \mathbb{N}\mathbb{I} $. 
Let
\[\mathcal{M}^{\mathrm{ind}}_{\underline{\alpha}} = [C_2^{\mathrm{ind}}(Q,\underline{\alpha})/G_{\underline{\alpha}}] 
\]
be the quotient stack parameterizing the isomorphism classes of $\mathbb{Z}/2 $-graded indecomposable complexes of dimension vector pair $ \underline{\alpha} $ and set
\[
\mathcal{M}^{\mathrm{ind}} = \bigsqcup_{\underline{\alpha} \in \mathbb{N}\mathbb{I} \times \mathbb{N}\mathbb{I} } \mathcal{M}^{\mathrm{ind}}_{\underline{\alpha}},
\]
then $ \mathcal{M}^{\mathrm{ind}} $ is a constructible substack of $ \mathcal{M} $. 

\begin{definition}\;
    \label{Definition of SDHind-1}
    \begin{enumerate}
        \item Define $ \mathcal{SDH}_t^{\mathrm{ind}}(\mathcal{A})_{\mathbb{C}_{-1}} $ to be the $ \mathbb{C}_{-1} $-subalgebra of $ \mathcal{SDH}_t^{\mathrm{red}}(\mathcal{A}) $ generated by elements of the forms
    \begin{align*}
    [[\mathcal{O}/G_{\underline{\beta}}]\hookrightarrow \mathcal{M}^{\mathrm{ind}}\hookrightarrow \mathcal{M}],\frac{1}{\tau_{il}(t^2-1)}[[\mathcal{O}^*/G_{\underline{\beta}^*}] \hookrightarrow \mathcal{M}^{\mathrm{ind}}\hookrightarrow \mathcal{M}],\;\;b_{\alpha},\;\frac{b_{\alpha}-1}{-t-1},
    \end{align*}
    where $ \mathcal{O} \subseteq C_2^{\mathrm{ind}}(Q, \underline{\beta}) $ is a positive indecomposable $ G_{\underline{\beta}} $-invariant constructible subset, $\underline{\beta} \in \mathbb{N}\mathbb{I} \times \mathbb{N}\mathbb{I} $, $ (i,l) \in \mathbb{I}^{\infty} $, and $\alpha \in K(\mathcal{A}) $. 

    \item Define
    \[\mathcal{SDH}_{-1}^{\mathrm{ind}}(\mathcal{A}) = \mathbb{C}_{-1}/(t+1)\mathbb{C}_{-1} \otimes_{\mathbb{C}_{-1}} \mathcal{SDH}_t^{\mathrm{ind}}(\mathcal{A})_{\mathbb{C}_{-1}}.\]      
    
    \end{enumerate}
        
\end{definition}

The following proposition shows that after passing to the classical limit, it is enough to work with the $ \mathbb{C} $-algebra $ \mathcal{SDH}_{-1}^{\mathrm{ind}}(\mathcal A)$. 
This gives a structural description of the classical limit $ \mathcal{SDH}_{-1}^{\mathrm{red}}(\mathcal A)$ at $ t=-1 $. 

\begin{proposition}
    \label{ind=red(SDH)}
    $ \mathcal{SDH}_{-1}^{\mathrm{ind}}(\mathcal{A})  \cong \mathcal{SDH}_{-1}^{\mathrm{red}}(\mathcal{A}) $ as $ \mathbb{C} $-algebras. 
\end{proposition}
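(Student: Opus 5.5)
The plan is to compare the two $\mathbb{C}_{-1}$-forms directly and show that they become equal after the specialization $t\mapsto -1$. By definition, $\mathcal{SDH}_t^{\mathrm{ind}}(\mathcal{A})_{\mathbb{C}_{-1}}$ is a $\mathbb{C}_{-1}$-subalgebra of $\mathcal{SDH}_t^{\mathrm{red}}(\mathcal{A})_{\mathbb{C}_{-1}}$, since every indecomposable generator is a particular generator of the reduced form. Tensoring with $\mathbb{C}_{-1}/(t+1)\mathbb{C}_{-1}$ therefore gives a natural algebra map
\[\mathcal{SDH}_{-1}^{\mathrm{ind}}(\mathcal{A})\to\mathcal{SDH}_{-1}^{\mathrm{red}}(\mathcal{A}),\]
and we identify $\mathcal{SDH}_{-1}^{\mathrm{ind}}(\mathcal{A})$ with its image. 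The real content is surjectivity at $t=-1$. That is, we must show that the bar image of every generator $[\mathcal{F}\xrightarrow{f}[\mathcal{O}/G_{\underline\beta}]\hookrightarrow\mathcal{M}]$ with $\mathcal{O}$ positive, together with its starred analogue, lies in the subalgebra generated by the indecomposable generators and the $h_\alpha$. Injectivity should come from the triangular decomposition of Proposition \ref{Triangular decomposition of SDH_t}, which makes both forms free over $\mathbb{C}_{-1}$ on compatible spanning sets. I expect this to need only a remark, but it is the step most likely to hide a torsion issue.

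For surjectivity we first reduce to generators supported on a Krull-Schmidt stratum. Using representability of $f$ and the scissor relations of Definition \ref{Scissor relation}(1), we may replace $\mathcal{F}$ by constructible pieces $[\mathcal{O}/G_{\underline\beta}]$ with $\mathcal{O}$ of Krull-Schmidt type. Remark \ref{disjoint union} then lets us take $\mathcal{O}=m_1\mathcal{O}_1\oplus\cdots\oplus m_n\mathcal{O}_n$ with $\mathcal{O}_j$ positive, indecomposable and pairwise disjoint.

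We then induct on $m=\sum m_j$, with a secondary induction on the degree of $\underline\beta$ and on $\varepsilon$ as in the proof of Proposition \ref{Triangular decomposition of SDH_t}. Take the ordered product $x_{\mathcal{O}_1}^{*m_1}*\cdots*x_{\mathcal{O}_n}^{*m_n}$ of the indecomposable generators. By Proposition \ref{Motivic Riedtmann-Peng} it expands as a sum of two kinds of terms.

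\textbf{Split-extension part.} This is supported on $\mathcal{O}$. Its coefficient is $\Upsilon$ of the split locus, divided by the ratio of automorphism groups, and Lemma \ref{Upsilon(Aut)} computes this explicitly. At $t=-1$ it specializes to $\prod_j m_j!$ (up to sign), which is the analogue of Riedtmann's and \cite{DXX10}'s Euler characteristic argument. This is nonzero.

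\textbf{Non-split part.} These terms have middle terms $Z\not\cong X\oplus Y$. By Lemma \ref{Ext^1(X,Y)Z}, each such stratum contributes a coefficient divisible by $t^2-1$, so these terms vanish after applying ${}^-$. The exception is when the Hall product is not itself in the $\mathbb{C}_{-1}$-form without rescaling, which should not happen for positive objects: there $\mathrm{Hom}$ and automorphism factors only produce units of $\mathbb{C}_{-1}$ or factors divisible by $t+1$.

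Hence
\[\overline{[[\mathcal{O}/G]\hookrightarrow\mathcal{M}]}=\Bigl(\prod_j m_j!\Bigr)^{-1}\,\overline{x_{\mathcal{O}_1}}^{\,m_1}\cdots\overline{x_{\mathcal{O}_n}}^{\,m_n},\]
and this lies in the image. General $\mathcal{F}$ over $\mathcal{O}$ is handled by pulling the indecomposable generators back along the induced maps to the factors $[\mathcal{O}_j/G]$ and repeating the argument fiberwise.

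The negative generators $\frac{1}{\tau_{il}(t^2-1)}[\mathcal{F}\to[\mathcal{O}^*/G]]$ are handled the same way, but the normalization factor $\frac{1}{t^2-1}$ introduces the main obstacle. A product of $m$ such indecomposable negative generators carries the factor $(t^2-1)^{-m}$, while a general negative generator carries only $(t^2-1)^{-1}$. We need the split coefficient $\Upsilon(\mathrm{Aut}(\oplus))/\prod\Upsilon(\mathrm{Aut})$ to supply exactly the compensating power of $(t^2-1)$; the last assertion of Lemma \ref{Upsilon(Aut)} gives $(t^2-1)^{m}\mid\Upsilon(\mathrm{Aut}(X))$. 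We also need the non-split terms, which are only divisible by a single factor $t^2-1$, to still vanish after this rescaling. To handle this I would first check it in the rank-one case $\mathcal{O}=S_i^{\oplus k}$ against Lemma \ref{eikfil}, and then argue in general by a filtration on the number of indecomposable summands. The point to verify is that each non-split extension strictly lowers that number, which gives an extra factor $t^2-1$ per lost summand. This bookkeeping is where I expect the real difficulty.
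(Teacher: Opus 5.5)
Your overall plan matches the paper's: induct on the number $m$ of indecomposable summands, and get a nonzero factorial-type constant from the split part. But the step that lets you skip the induction is false.

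\textbf{The non-split strata do not all vanish.} You argue that every non-split stratum dies at $t=-1$ because $(t^2-1)\mid\Upsilon(\mathrm{Ext}^1_{\mathcal{C}_2(\mathcal{A})}(X,Y)_Z)$ by Lemma~\ref{Ext^1(X,Y)Z}. Consider the product of $[[\mathrm{Spec}\,\mathbb{C}/\mathrm{Aut}(X)]\hookrightarrow\mathcal{M}]$ and $[[\mathrm{Spec}\,\mathbb{C}/\mathrm{Aut}(Y)]\hookrightarrow\mathcal{M}]$. Up to the twist, which is $1$ at $t=-1$, its $Z$-stratum is
\[
\frac{\Upsilon(\mathrm{Ext}^1_{\mathcal{C}_2(\mathcal{A})}(X,Y)_Z)\,\Upsilon(\mathrm{Aut}_{\mathcal{C}_2(\mathcal{A})}(Z))}{\Upsilon(\mathrm{Hom}_{\mathcal{C}_2(\mathcal{A})}(X,Y))\,\Upsilon(\mathrm{Aut}_{\mathcal{C}_2(\mathcal{A})}(X))\,\Upsilon(\mathrm{Aut}_{\mathcal{C}_2(\mathcal{A})}(Y))}\,[[\mathrm{Spec}\,\mathbb{C}/\mathrm{Aut}(Z)]\hookrightarrow\mathcal{M}].
\]
By the explicit formula in Lemma~\ref{Upsilon(Aut)}, the power of $t+1$ in $\Upsilon(\mathrm{Aut}_{\mathcal{C}_2(\mathcal{A})}(W))$ is exactly the number of indecomposable summands of $W$. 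So the automorphism ratio contributes $(t+1)^{\gamma(Z)-\gamma(X)-\gamma(Y)}$. The single factor from the Ext-lemma therefore kills only the strata with $\gamma(Z)\geq\gamma(X)+\gamma(Y)$, which is precisely Lemma~\ref{Inequality for gamma in C2A+}. Strata in which $Z$ has fewer summands survive.

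Concretely, take $Q=1\to 2$. The product $x_{C_{S_1}}*x_{C_{S_2}}$ (in this order) contains the class of $C_{P_1}$ with coefficient $1$ at $t=-1$, because that stratum is $[\mathbb{C}^*/(\mathbb{C}^*)^2]\cong[\mathrm{Spec}\,\mathbb{C}/\mathrm{Aut}(C_{P_1})]$. Terms like this are exactly what make Lie brackets nonzero in the limit. Consequences:
\begin{enumerate}
\item Your displayed identity for $\overline{[[\mathcal{O}/G_{\underline{\beta}}]\hookrightarrow\mathcal{M}]}$ is wrong.
\item Your remark that the Hom and Aut factors only produce units or multiples of $t+1$ is wrong; they can produce poles.
\item Your heuristic for the negative part (``an extra factor $t^2-1$ per lost summand'') has the sign reversed: losing a summand removes a factor $t^2-1$ from $\Upsilon(\mathrm{Aut}(Z))$.
\end{enumerate}

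\textbf{What is missing.} The correct statement is that the ordered product equals a nonzero multiple of the Krull--Schmidt class plus terms with $1\leq\gamma<m$ (Lemma~\ref{O^k in C2A+}, Corollary~\ref{Of Krull-Schmidt in C2A+}). To absorb those lower terms by your induction on $m$, you must know that, after stratifying, they are supported on sets $m'_1\mathcal{P}_1\oplus\cdots\oplus m'_k\mathcal{P}_k$ with each $\mathcal{P}_j$ positive and indecomposable. An arbitrary constructible family of decomposable complexes need not have this form, and then your induction hypothesis does not apply to it.

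The paper supplies this in Lemma~\ref{Of stratified Krull-Schmidt in C2A+}, the analogue of \cite[Proposition 11]{DXX10}. Let $\mathbb{C}^*$ rescale the indecomposable summands of $Z$. Filtrations not fixed by this action lie in free $\mathbb{C}^*$-orbits and vanish at $t=-1$. A fixed filtration forces each indecomposable summand of $Z$ to be an extension of a summand of $X$ by a summand of $Y$. With this, the paper writes $1_{[\mathcal{O}/G_{\underline{\beta}}]}$ as a nonzero multiple of $1_{[\mathcal{O}'/G_{\underline{\beta}'}]}*1_{[\mathcal{O}_1/G_{\underline{\beta}_1}]}$ minus terms with $1\leq\gamma<m$, and applies the induction hypothesis to $1_{[\mathcal{O}'/G_{\underline{\beta}'}]}$ and to the lower terms. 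Your secondary induction on $\varepsilon$ plays no role in this argument.
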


In order to prove the above proposition, we introduce the following notation for simplicity and establish some lemmas in $ \mathcal{SDH}_{-1}^{\mathrm{red}}(\mathcal{A}) $. 

Let $ \mathcal{O} = m_1 \mathcal{O}_1 \oplus \cdots \oplus m_n \mathcal{O}_n $ be a positive $ G_{\underline{\beta}} $-invariant constructible subset, where $ \mathcal{O}_i \subseteq C_2^{\mathrm{ind}}(Q, \underline{\beta}_i) $ is a positive indecomposable $ G_{\underline{\beta}_i} $-invariant constructible subset for any $ 1 \leqslant i \leqslant n $ and $ \underline{\beta} = m_1 \underline{\beta}_1 + \cdots + m_n \underline{\beta}_n $ with $ m_i \in \mathbb{N} $. 
Define the element
\begin{align*}
    1_{[\mathcal{O}/G_{\underline{\beta}}]} = 1 \otimes [\prod_{i=1}^n[\mathcal{O}_i/G_{\underline{\beta}_i}]^{m_i} \xrightarrow{\iota}[\mathcal{O}/G_{\underline{\beta}}]  \hookrightarrow \mathcal{M}] \in \mathcal{SDH}_{-1}^{\mathrm{red}}(\mathcal{A}),
\end{align*}
where $ \iota $ corresponds to the map
\[([X_{1,1}],\cdots,[X_{1,m_1}],\cdots,[X_{n,1}],\cdots,[X_{n,m_n}]) \mapsto [X_{1,1} \oplus \cdots \oplus X_{1,m_1} \oplus \cdots  \oplus X_{n,1}\oplus \cdots \oplus X_{n,m_n}].\]

By Remark \ref{disjoint union}, we may always assume that $ \mathcal{O}_i \cap \mathcal{O}_j = \emptyset $ for $ i \neq j $ and we need to show that $ 1_{[\mathcal{O}/G_{\underline{\beta}}]} \in \mathcal{SDH}_{-1}^{\mathrm{ind}}(\mathcal{A}) $.

\begin{definition}
    \label{Grading of gamma in C2A+}
    For any complex $ X \in \mathcal{C}_2(\mathcal{A}) $ of the form $ C_M $ for some $ M \in \mathcal{A} $, let $ \gamma([X])  =\gamma([C_M]) $ be the number of indecomposable direct summands in the Krull-Schmidt decomposition of $ M $. 
    
    Fix a dimension vector pair $ \underline{\beta} \in \mathbb{N}\mathbb{I} \times \mathbb{N}\mathbb{I} $. 
    For any $ [\mathcal{F} \xrightarrow{f} [\mathcal{O}/G_{\underline{\beta}}] \hookrightarrow \mathcal{M}] \in \mathcal{SDH}_t^{\mathrm{red}}(\mathcal{A})_{\mathbb{C}_{-1}}$ with $ \mathcal{O} $ positive, define
\begin{align*}
    \gamma([\mathcal{F} \xrightarrow{f} [\mathcal{O}/G_{\underline{\beta}}] \hookrightarrow \mathcal{M}]) = \mathrm{max} \{\gamma([X]) \mid  X \in \mathcal{C}_2(\mathcal{A}) \text{ s.t. }   \delta_{[X]}  \in \mathrm{Im}\;f(\mathbb{C}) \}.
\end{align*}

\end{definition}

For simplicity, if $ f $ is injective, we set $ \gamma(\mathcal{F}) = \gamma ([\mathcal{F} \xhookrightarrow{f} [\mathcal{O}/G_{\underline{\beta}}]\hookrightarrow \mathcal{M}]) $.

\begin{lemma}
    \label{Inequality for gamma in C2A+}
    Let $ \mathcal{O}_i \subseteq  C_2(Q,\underline{\beta}_i) $ be a positive $ G_{\underline{\beta}_i} $-invariant constructible subset for $ i=1,2 $. 
    By Proposition \ref{Motivic Riedtmann-Peng}, in $ \mathcal{SDH}_{-1}^{\mathrm{red}}(\mathcal{A}) $, write
     \begin{align*}
    1_{[\mathcal{O}_1/G_{\underline{\beta}_1}]} * 1_{[\mathcal{O}_2/G_{\underline{\beta}_2}]} = 1 \otimes \sum_{m \in M} [[V_m \times E_m^1/G_m \ltimes E_m^0] \xrightarrow{\varphi_m} \mathcal{M}],
    \end{align*}
    where $ V_m \subseteq \mathcal{O}_1 \times \mathcal{O}_2 $ is a constructible subset such that if  $ v \in V_m $ corresponds to two complexes $ (X,Y) $, then $ \mathrm{Stab}_{G_m}(v) \cong \mathrm{Aut}_{\mathcal{C}_2(\mathcal{A})}(X) \times \mathrm{Aut}_{\mathcal{C}_2(\mathcal{A})}(Y) $, the dimensions of $ E_m^0 \cong \mathrm{Hom_{\mathcal{C}_2(\mathcal{A})}}(X,Y) $ and $ E_m^1 \cong \mathrm{Ext^1_{\mathcal{C}_2(\mathcal{A})}}(X,Y) $ are constant for various $ v \in V_m $, and $  \varphi_m = b\pi_m $ corresponds to the map $ (v,\xi) \mapsto [Z_{\xi}] $. 
   
    We have
    \begin{align*}
        \gamma([V_m \times E_m^1/G_m \ltimes E_m^0]) \leqslant \gamma([\mathcal{O}_1/G_{\underline{\beta}_1}]) + \gamma([\mathcal{O}_2/G_{\underline{\beta}_2}]) \text{ for any } m \in M.
    \end{align*}    
\end{lemma}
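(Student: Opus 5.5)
The plan is to reduce the statement to a pointwise claim about middle terms of extensions in $\mathcal{A}$, then pass to suprema. Fix $m \in M$ and a point $(v,\xi)$ of $V_m \times E_m^1$. Here $v$ corresponds to a pair $(X,Y)$ with $X = C_M$ and $Y = C_N$, where $[C_M] \in [\mathcal{O}_1/G_{\underline{\beta}_1}](\mathbb{C})$ and $[C_N] \in [\mathcal{O}_2/G_{\underline{\beta}_2}](\mathbb{C})$ are in the images of the respective maps, and $\xi \in \mathrm{Ext}^1_{\mathcal{C}_2(\mathcal{A})}(C_M,C_N)$ determines a short exact sequence $0 \to C_N \to Z_\xi \to C_M \to 0$. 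By the definition of $\gamma$ as a maximum over $\mathbb{C}$-points in the image (Definition \ref{Grading of gamma in C2A+}), it suffices to show
\[
\gamma([Z_\xi]) \leqslant \gamma([C_M]) + \gamma([C_N])
\]
for every such $\xi$. The right-hand side is bounded by $\gamma([\mathcal{O}_1/G_{\underline{\beta}_1}]) + \gamma([\mathcal{O}_2/G_{\underline{\beta}_2}])$.

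The first step is to identify $Z_\xi$. Both $C_M$ and $C_N$ are concentrated in degree $0$ with zero differentials. Any extension of them has degree-$1$ component $0 \oplus 0 = 0$, so both differentials of $Z_\xi$ vanish. Hence $Z_\xi = C_L$ for some $L \in \mathcal{A}$ sitting in a short exact sequence $0 \to N \to L \to M \to 0$ in $\mathcal{A}$. Equivalently, $\mathrm{Ext}^1_{\mathcal{C}_2(\mathcal{A})}(C_M,C_N) \cong \mathrm{Ext}^1_{\mathcal{A}}(M,N)$, by restricting to the degree-$0$ component. In particular every point in the image of $\varphi_m$ is positive, so $\gamma$ of the left-hand side is defined.

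The main step is the inequality $\gamma(L) \leqslant \gamma(M) + \gamma(N)$ for an extension $0 \to N \to L \to M \to 0$ in $\mathcal{A}$, where $\gamma$ counts indecomposable summands. One cannot simply use the top, since nilpotent modules over a quiver with loops have tops that may not behave additively in the needed way. I would instead argue via idempotents or local endomorphism rings. Write $L = L_1 \oplus \cdots \oplus L_s$ with each $L_j$ indecomposable, and let $p_j \colon L \to L_j$ be the projections. The key claim I expect to need is that each $L_j$ either meets the image of $N$ nontrivially or maps nonzero to $M$, and that summands can be charged injectively to indecomposable summands of $N$ or $M$.

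A cleaner route, which I would try first, is to use the Grothendieck-group-free invariant given by the number of summands, in its $K_0$-of-split-exact form. Let $\gamma(L) = \dim_{\mathbb{C}} \mathrm{End}(L)/\mathrm{rad}\,\mathrm{End}(L)$ counted with multiplicity. For basic counting one should use instead the length of $\mathrm{End}(L)/\mathrm{rad}$ as a semisimple module, which equals the number of summands since $\mathbb{C}$ is algebraically closed and all summands have local endomorphism rings with residue field $\mathbb{C}$. Then one would try semicontinuity: $C_L$ degenerates to $C_{M \oplus N}$ via the $\mathbb{C}^*$-action on extension classes, scaling $\xi \mapsto \lambda\xi$ as in Lemma \ref{Ext^1(X,Y)Z}. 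So $C_{M\oplus N}$ lies in the orbit closure of $C_L$. The number of indecomposable summands is known to be upper semicontinuous in the sense that it can only increase under degeneration; I would justify this by noting that $\dim \mathrm{End}$ is upper semicontinuous and that a decomposition into $s$ summands gives an $s$-dimensional torus in $\mathrm{Aut}$, whose rank, the maximal torus dimension, is upper semicontinuous along orbit closures. Hence $\gamma(L) \leqslant \gamma(M \oplus N) = \gamma(M) + \gamma(N)$.

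The main obstacle is making this semicontinuity of the maximal torus rank of the stabilizer rigorous. I would do it by noting that a point with an $s$-dimensional torus in its stabilizer lies in a closed $G$-stable condition, namely the union of the $G$-translates of the fixed loci of $s$-dimensional tori, and that orbit closures stay inside this closed set. Failing that, I would fall back on the direct charging argument via a Fitting-type lemma.
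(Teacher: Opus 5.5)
\textbf{There is a genuine gap.} Your reduction to the pointwise inequality $\gamma([Z_\xi])\leqslant\gamma([C_M])+\gamma([C_N])$ is where the argument fails. That inequality is false in general, so neither the semicontinuity route nor a Fitting-type charging argument can prove it.

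A counterexample: take $Q$ of type $D_4$ with arrows $1\to 0$, $2\to 0$, $3\to 0$. There are no loops, so $\mathcal{A}$ is all of $\mathrm{rep}_{\mathbb{C}}(Q)$, and the paper allows any finite quiver. The Auslander--Reiten sequence
\[
0\to S_0\to P_1\oplus P_2\oplus P_3\to M\to 0
\]
has $S_0$ embedded diagonally in $(P_1\oplus P_2\oplus P_3)_0=\mathbb{C}^3$, and $M$ is the indecomposable of dimension vector $2e_0+e_1+e_2+e_3$ (three distinct lines in $\mathbb{C}^2$). Both end terms have $\gamma=1$, but the middle term has $\gamma=3$.

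The same example refutes your semicontinuity claim. By your own scaling degeneration, $C_M\oplus C_{S_0}$ lies in the orbit closure of $C_{P_1\oplus P_2\oplus P_3}$. Yet the number of indecomposable summands, i.e.\ the rank of a maximal torus in the stabilizer, drops from $3$ to $2$. So this rank is not upper semicontinuous along orbit closures. The locus of points whose stabilizer contains an $s$-dimensional torus is a union of sets $G_{\underline{\beta}}\cdot C_2(Q,\underline{\beta})^T$. These are constructible but not closed, since the action map $G_{\underline{\beta}}\times C_2(Q,\underline{\beta})^T\to C_2(Q,\underline{\beta})$ is not proper when $G_{\underline{\beta}}$ is affine.

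Consequently, the inequality cannot hold for the stacks $[V_m\times E_m^1/G_m\ltimes E_m^0]$ as they come out of Proposition \ref{Motivic Riedtmann-Peng}. It holds only in $\mathcal{SDH}^{\mathrm{red}}_{-1}(\mathcal{A})$, and the specialization $t=-1$ is essential. The missing idea is the paper's three-way split of $V_m\times E_m^1$:
\begin{enumerate}
\item points with $Z_\xi\cong X\oplus Y$;
\item points with $\gamma([Z_\xi])<\gamma([X])+\gamma([Y])$;
\item the remaining points, which must be shown to vanish under $1\otimes -$.
\end{enumerate}
On the third stratum, $Z_\xi\not\cong X\oplus Y$, so Lemma \ref{Ext^1(X,Y)Z} gives $(t^2-1)\mid\Upsilon(\mathrm{Ext}^1_{\mathcal{C}_2(\mathcal{A})}(X,Y)_{Z_\xi})$ via the free $\mathbb{C}^*$-action. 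By Lemma \ref{Upsilon(Aut)}, the $(t+1)$-adic valuation of $\Upsilon(\mathrm{Aut}_{\mathcal{C}_2(\mathcal{A})}(-))$ is exactly the number of indecomposable summands. Hence $\gamma([Z_\xi])\geqslant\gamma([X])+\gamma([Y])$ forces $\Upsilon(\mathrm{Aut}(Z_\xi))/(\Upsilon(\mathrm{Aut}(X))\Upsilon(\mathrm{Aut}(Y)))$ to lie in $\mathbb{C}_{-1}$. That stratum therefore contributes an element of $(t+1)\,\mathcal{SDH}_t^{\mathrm{red}}(\mathcal{A})_{\mathbb{C}_{-1}}$, which is killed in the classical limit.

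In the $D_4$ example, the offending stratum is $(t^2-1)^2$ times the generator $[[\mathcal{O}_{C_{P_1\oplus P_2\oplus P_3}}/G]\hookrightarrow\mathcal{M}]$, up to a power of $-t$. This is why the three-summand middle term never survives to $t=-1$.

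Your first step, identifying $Z_\xi=C_L$ with $0\to N\to L\to M\to 0$ exact in $\mathcal{A}$, is correct, but it does not touch this issue.
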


\begin{proof}
    Divide $ V_m \times E_m^1 $ into a disjoint union of $ G_m \ltimes E_m^0 $-invariant constructible subsets
    \begin{align*}
        V_m \times E_m^1 = \bigsqcup_{i=1}^3 (V_m \times E_m^1)_i,
    \end{align*}
    where
    \begin{align*}
        (V_m \times E_m^1)_1 &= \{(v,\xi)\mid  1 \leqslant \gamma([Z_{\xi}]) < \gamma([X])+\gamma([Y]) \text{ with } (X,Y) \text{ corresponding to } v \}, \\
        (V_m \times E_m^1)_2 &= \{(v,\xi)\mid  Z_{\xi} \cong X \oplus Y \text{ with } (X,Y) \text{ corresponding to } v \},
    \end{align*}
    and $ (V_m \times E_m^1)_3 $ is their complement. 
    For any $ i=1,2,3 $, define the following constructible subsets
    \begin{align*}
        (V_m)_i = \pi_1((V_m \times E_m^1)_i) \subseteq 
        V_m\text{ and } (E_m^1)_i = \pi_2((V_m \times E_m^1)_i) \subseteq E_m^1,
    \end{align*}
    where $ \pi_1 $ and $ \pi_2 $ denote the first and second projections, respectively. 
    For simplicity, set
    \[x_i = \sum_{m \in M} [[(V_m \times E_m^1)_i/G_m \ltimes E_m^0] \xrightarrow{(\varphi_m)_i} \mathcal{M}], \]
    where $ (\varphi_m)_i $ is the restriction map of $ \varphi_m $, then
    \begin{align*}
        1_{[\mathcal{O}_1/G_{\underline{\beta}_1}]} * 1_{[\mathcal{O}_2/G_{\underline{\beta}_2}]} = 1 \otimes \sum_{i=1}^3\sum_{m \in M} [[(V_m \times E_m^1)_i/G_m \ltimes E_m^0] \xrightarrow{(\varphi_m)_i} \mathcal{M}] = 1 \otimes \sum_{i=1}^3 x_i.
    \end{align*}
    
    It suffices to show that $ 1 \otimes x_3 = 0  $. 
    For any $ (v, \xi) \in  (V_m \times E_m^1)_3 $, we have $ \gamma([Z_{\xi}]) \geqslant \gamma([X])+\gamma([Y]) $ and $  Z_{\xi}  \not\cong X \oplus Y $. 
         
         By Lemma \ref{Upsilon(Aut)}, we have $ (t^2-1)^{\gamma([Z_{\xi}])} \mid \Upsilon(\mathrm{Aut}_{\mathcal{C}_2(\mathcal{A})}(Z_{\xi})) $, 
    \begin{align*}
        (t^2-1)^{\gamma([X])} \mid \Upsilon(\mathrm{Aut}_{\mathcal{C}_2(\mathcal{A})}(X)) \text{ and } (t^2-1)^{\gamma([Y])} \mid \Upsilon(\mathrm{Aut}_{\mathcal{C}_2(\mathcal{A})}(Y)).
    \end{align*}
    By Lemma \ref{Ext^1(X,Y)Z}, we have $ (t^2-1) \mid \Upsilon(\mathrm{Ext}^1_{\mathcal{C}_2(\mathcal{A})}(X,Y)_{Z_{\xi}}) $, then
    \begin{align*}
        (t^2-1) \mid \frac{\Upsilon(\mathrm{Ext}^1_{\mathcal{C}_2(\mathcal{A})}(X,Y)_{Z_{\xi}})\Upsilon(\mathrm{Aut}_{\mathcal{C}_2(\mathcal{A})}(Z_{\xi}))}{\Upsilon(\mathrm{Aut}_{\mathcal{C}_2(\mathcal{A})}(X))\Upsilon(\mathrm{Aut}_{\mathcal{C}_2(\mathcal{A})}(Y))}, 
    \end{align*}
    and thus
    \begin{align*}
         (t^2-1) \mid [[(E_m^1)_3/(\mathrm{Aut}_{\mathcal{C}_2(\mathcal{A})}(X) \times \mathrm{Aut}_{\mathcal{C}_2(\mathcal{A})}(Y)) \ltimes \mathrm{Hom}_{\mathcal{C}_2(\mathcal{A})}(X,Y)] \xrightarrow{(\varphi_m)_3} \mathcal{M}].
    \end{align*}
    Hence, $ 1 \otimes x_3 =0 $.
\end{proof}

\begin{lemma}
    \label{O^k in C2A+}
    Let $ \mathcal{O} \subseteq  C_2^{\mathrm{ind}}(Q,\underline{\beta}) $ be a positive indecomposable $ G_{\underline{\beta}} $-invariant constructible subset. 
    For any integer  $ k \geqslant 1 $,write
    \begin{align*}      {1_{[\mathcal{O}/G_{\underline{\beta}}]}}^{*k}  = 1 \otimes \sum_{m \in M}\sum_{n \in N_m}c_{mn}[[W_{mn} \times E_m^1/G_m \ltimes E_m^0] \xrightarrow{b \pi_m \Tilde{\tau}_{mn}}\mathcal{M}],
    \end{align*}
    where the notations are the same as in the proof of Proposition \ref{KX=XK}. 
    In $ \mathcal{SDH}_{-1}^{\mathrm{red}}(\mathcal{A}) $, we have
    \begin{align*}
         {1_{[\mathcal{O}/G_{\underline{\beta}}]}}^{*k}  = k! 1_{[k\mathcal{O}/G_{k\underline{\beta}}]} + (\text{ terms with $ 1 \leqslant \gamma <k $ } ).
    \end{align*}   
\end{lemma}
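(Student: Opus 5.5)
Induction on $k$, with $k=1$ trivial since $1_{[\mathcal{O}/G_{\underline{\beta}}]}$ is itself the stated term and $\gamma=1$ on $\mathcal{O}$. For the inductive step I write ${1_{[\mathcal{O}/G_{\underline{\beta}}]}}^{*k} = {1_{[\mathcal{O}/G_{\underline{\beta}}]}}^{*(k-1)} * 1_{[\mathcal{O}/G_{\underline{\beta}}]}$ and use the hypothesis ${1_{[\mathcal{O}/G_{\underline{\beta}}]}}^{*(k-1)} = (k-1)!\,1_{[(k-1)\mathcal{O}/G_{(k-1)\underline{\beta}}]} + (\text{terms with } 1\leqslant\gamma<k-1)$. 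By Lemma \ref{Inequality for gamma in C2A+}, multiplying a term with $\gamma<k-1$ by $1_{[\mathcal{O}/G_{\underline{\beta}}]}$ only produces terms with $\gamma<k$. It then suffices to show
\[
1_{[(k-1)\mathcal{O}/G_{(k-1)\underline{\beta}}]} * 1_{[\mathcal{O}/G_{\underline{\beta}}]} = k\, 1_{[k\mathcal{O}/G_{k\underline{\beta}}]} + (\text{terms with } 1\leqslant\gamma<k).
\]

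For this product I would use the decomposition from the proof of Lemma \ref{Inequality for gamma in C2A+}. Stratify $V_m\times E_m^1$ into pieces $1,2,3$; piece $3$ vanishes after applying $1\otimes$, and piece $1$ contributes only terms with $\gamma<k$. It remains to handle piece $2$, where $\xi$ gives the split extension $Z_\xi\cong X\oplus Y$ with $X=X_1\oplus\cdots\oplus X_{k-1}$ and $Y$ a point of $\mathcal{O}$.

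On piece $2$ I argue as in the proof of Proposition \ref{KX=XK} and Corollary \ref{bAbB}. The split locus in $\mathrm{Ext}^1(X,Y)$ is the single point $0$, and the quotient by the unipotent part $E_m^0=\mathrm{Hom}(X,Y)$ contributes $\Upsilon(\mathrm{Hom})^{-1}=t^{-2\dim}$, which equals $1$ at $t=-1$. The $\mathbb{C}^{*}$ and twist powers $(-t)^{\langle\cdot,\cdot\rangle}$ likewise specialize to $1$ after pairing with the Euler form. So piece $2$ becomes, at $t=-1$, the class of the map
\[
[(k-1)\text{-tuples in }\mathcal{O}] \times [\mathcal{O}/G_{\underline{\beta}}] \to \mathcal{M},\qquad ([X_1],\dots,[X_{k-1}],[Y])\mapsto[X_1\oplus\cdots\oplus X_{k-1}\oplus Y].
\]
Here the first factor stands for the source of $1_{[(k-1)\mathcal{O}]}$, which I have to unwind as a stack over $\prod[\mathcal{O}/G_{\underline\beta}]^{k-1}$ mapping through $(k-1)\mathcal{O}$. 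This is exactly the defining map $\iota$ of $1_{[k\mathcal{O}/G_{k\underline{\beta}}]}$, up to the combinatorial factor. The factor arises because, for the $(k-1)$-fold class, the source $\prod^{k-1}[\mathcal{O}/G]$ already carries $(k-1)!$ after the induction. So I should track the leading coefficient directly: the $k$-fold product of $[\mathcal{O}/G]$ maps onto $k\mathcal{O}$ via $\iota$, and the products of the $\Upsilon(\mathrm{Aut})$ ratios together with the Lemma \ref{Upsilon(Aut)} factors $\prod\frac{t^{m_i(m_i-1)}\prod_{j\le m_i}(t^{2j}-1)}{(t^2-1)^{m_i}}$ specialize at $t=-1$ to $\prod m_i!$. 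These should combine into exactly $k!$, corresponding to the orderings of isomorphic summands.

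The main obstacle is this leading coefficient. Points of $k\mathcal{O}$ whose summands $X_j$ are pairwise non-isomorphic and points with repeated summands contribute different automorphism-group factors. The cleanest route is to compare the stack $\prod_{j=1}^k[\mathcal{O}/G_{\underline\beta}]$ with the actual fibre of the iterated convolution over a point $Z=\bigoplus X_j$. That fibre consists of the flags of subcomplexes with indecomposable quotients in $\mathcal{O}$, modulo automorphisms. By Lemma \ref{Ext^1(X,Y)Z} and $\chi$-counting, its Euler characteristic equals the number of orderings of the summands, $k!/\prod m_i!$. The stack $\prod[\mathcal{O}/G]$ mapping by $\iota$ has fibre Euler characteristic $1/\prod m_i!$ at such a point, so it sees each unordered decomposition counted with weight $1/\prod m_i!$. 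Taking the ratio gives the uniform factor $k!$. I would verify this fibrewise identity by stratifying $k\mathcal{O}$ according to the multiplicity type $(m_i)$ and using scissor relation (1).
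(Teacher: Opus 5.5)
Your reduction is the paper's. You use induction on $k$, apply Lemma \ref{Inequality for gamma in C2A+} to keep the lower terms in $\gamma<k$, and reduce to showing that the coefficient $c$ of $1_{[k\mathcal{O}/G_{k\underline{\beta}}]}$ in $1_{[(k-1)\mathcal{O}/G_{(k-1)\underline{\beta}}]}*1_{[\mathcal{O}/G_{\underline{\beta}}]}$ equals $k$, with pieces $1$ and $3$ handled as there. The gap is that you never actually obtain $c=k$, and that is the only real content of the lemma. On piece $2$ you keep only $t^{-2\dim\mathrm{Hom}}$ and the twist, and identify the result with the $\iota$-class. Taken at face value this gives $c=1$, and ``up to the combinatorial factor'' has no derivation behind it. The paper gets the factor by working pointwise at $Z=m_1X_1\oplus\cdots\oplus m_nX_n$ against $\delta_{[Z]}$. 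The split locus over $Z$ receives one contribution for each isomorphism type $X_j$ that can occur as the subobject, coming from $(W,Y)=(Z\backslash X_j,X_j)$. Each contribution carries the factor $t^{-2\dim\mathrm{Hom}(W,Y)}\Upsilon(\mathrm{Aut}(Z))\Upsilon(\mathrm{Aut}(W))^{-1}\Upsilon(\mathrm{Aut}(Y))^{-1}$. By Lemma \ref{Upsilon(Aut)} this is a power of $t$ times $1+t^2+\cdots+t^{2(m_j-1)}$, so it specializes to $m_j$, not to $1$. Summing over $j$ gives $\sum_j m_j=k$. You mention such automorphism ratios but never compute them, and they are exactly where the factor comes from.

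Your fallback fibre comparison does not repair this, because both Euler characteristics you quote are wrong. Take, e.g., $k=2$ and $\mathcal{O}$ the orbit of $X=C_{S_i}$ for a vertex $i$ without loops, so $\mathrm{Ext}^1_{\mathcal{C}_2(\mathcal{A})}(X,X)=0$. Over $Z=X\oplus X$, the fibre of the convolution is the variety of subcomplexes $F\cong X$ of $Z$, i.e.\ $\mathbb{P}^1$. It has $\chi=2$, not $k!/\prod m_i!=1$; the product does not pass to $\mathrm{Aut}(Z)$-orbits. The fibre of $\iota$ is $\mathrm{GL}_2(\mathbb{C})/(\mathbb{C}^*)^2$, with $\chi=2$ (or one point if you count isomorphism classes), never $1/2$. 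In general, the torus $(\mathbb{C}^*)^k$ scaling the summands of $Z=Z_1\oplus\cdots\oplus Z_k$ has exactly $k!$ fixed flags with subquotients in $\mathcal{O}$, one per permutation of the $k$ summands rather than of their isomorphism types. It likewise has $k!$ fixed ordered decompositions. So both fibres have $\chi=k!$, and relative to the literal $\iota$-pushforward from $\prod[\mathcal{O}/G_{\underline{\beta}}]^k$ the leading coefficient is $1$. That is what your piece-$2$ computation actually shows. The factors $k$ and $k!$ are relative to the pointwise normalization used in the paper's computation, where $1_{[k\mathcal{O}/G_{k\underline{\beta}}]}$ is treated as the characteristic function of $k\mathcal{O}$, taking the value $\delta_{[Z]}$ at each point. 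The same normalization underlies the factor $\frac{1}{m_1}$ in the proof of Proposition \ref{ind=red(SDH)}. To close the gap, fix that normalization and then count. You can do this locally with the automorphism ratio as above, or globally via $\chi(\text{flag variety})=k!$ against the value $1$ of the characteristic function at $Z$.
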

\begin{proof}
    Prove by induction on $ k $. 
    If $ k=1 $, clearly. 
    Suppose that the lemma holds for some $ k = k' \geqslant 1 $. 
    By the induction hypothesis,
    \begin{align*}
        {1_{[\mathcal{O}/G_{\underline{\beta}}]}}^{*(k'+1)}  &= k'! 1_{[k'\mathcal{O}/G_{k'\underline{\beta}}]} * 1_{[\mathcal{O}/G_{\underline{\beta}}]} + (\text{ terms with $ 1 \leqslant \gamma < k' $ } ) * 1_{[\mathcal{O}/G_{\underline{\beta}}]}. 
    \end{align*}
    By Lemma \ref{Inequality for gamma in C2A+},
    \begin{align*}
        {1_{[\mathcal{O}/G_{\underline{\beta}}]}}^{*(k'+1)} = k'! c 1_{[(k'+1)\mathcal{O}/G_{(k'+1)\underline{\beta}}]} + (\text{ terms with $  1 \leqslant \gamma <k'+1 $ } )
    \end{align*}
    for some  $ c \in \mathbb{C} $. 
    It suffices to show that $ c=k'+1 $, in other words,
    \begin{align*}
        1_{[k'\mathcal{O}/G_{k'\underline{\beta}}]} * 1_{[\mathcal{O}/G_{\underline{\beta}}]} = (k'+1) 1_{[(k'+1)\mathcal{O}/G_{(k'+1)\underline{\beta}}]} +(\text{ terms with $ 1 \leqslant  \gamma <k'+1 $ }  ). 
    \end{align*}
    Using the same notations as in Lemma \ref{Inequality for gamma in C2A+}, it suffices to show that
    \begin{align*}
        1 \otimes x_2 = 1 \otimes \sum_{m \in M} [[(V_m \times E_m^1)_2/G_m \ltimes E_m^0] \xrightarrow{(\varphi_m)_2} \mathcal{M}] = (k'+1) 1_{[(k'+1)\mathcal{O}/G_{(k'+1)\underline{\beta}}]},
    \end{align*}
    where $ (V_m \times E_m^1)_2 $ consists of $ (v,\xi) $ such that $ Z_{\xi} \cong X  \oplus Y $ with complexes $ X,Y $ corresponding to points in $ k'\mathcal{O},\mathcal{O} $. 

    To prove this, we use the following general fact.  
    Let $ W = W_1 \oplus \cdots \oplus W_a $, where $ W_1, \cdots, W_a \in \mathcal{C}_2(\mathcal{A}) $ are indecomposable complexes with $ w $ many distinct isomorphism classes of sizes $ a_1, \cdots, a_w $ such that $ a_1 + \cdots + a_w =  a $, and $ Y \in \mathcal{C}_2(\mathcal{A}) $ be an indecomposable complex. 
    By Lemma \ref{Upsilon(Aut)}, we have
    \begin{align*}
        & [[\mathrm{Ext}^1_{\mathcal{C}_2(\mathcal{A})}(W,Y)_{W \oplus Y}/(\mathrm{Aut}_{\mathcal{C}_2(\mathcal{A})}(W) \times \mathrm{Aut}_{\mathcal{C}_2(\mathcal{A})}(Y)) \ltimes \mathrm{Hom}_{\mathcal{C}_2(\mathcal{A})}(W,Y)] \xrightarrow{\varphi} \mathcal{M}] \\
        =& t^{-2 \mathrm{dim}\; \mathrm{Hom}_{\mathcal{C}_2(\mathcal{A})}(W,Y)} \frac{\Upsilon(\mathrm{Aut}_{\mathcal{C}_2(\mathcal{A})}(W \oplus Y))}{\Upsilon(\mathrm{Aut}_{\mathcal{C}_2(\mathcal{A})}(W))\Upsilon(\mathrm{Aut}_{\mathcal{C}_2(\mathcal{A})}(Y))}\delta_{[W \oplus Y]} \\
        =& t^{-2 \mathrm{dim}\; \mathrm{Hom}_{\mathcal{C}_2(\mathcal{A})}(W,Y)}\frac{\Upsilon(\mathrm{Aut}_{\mathcal{C}_2(\mathcal{A})}(W \oplus Y))\prod_{i=1}^a \Upsilon(\mathrm{Aut}_{\mathcal{C}_2(\mathcal{A})}(W_i))^{-1}\Upsilon(\mathrm{Aut}_{\mathcal{C}_2(\mathcal{A})}(Y))^{-1}}{\Upsilon(\mathrm{Aut}_{\mathcal{C}_2(\mathcal{A})}(W))\prod_{i=1}^a \Upsilon(\mathrm{Aut}_{\mathcal{C}_2(\mathcal{A})}(W_i))^{-1}}\delta_{[W \oplus Y]} \\
        =& t^{2l(W,Y)} \cdot \begin{cases}
            t^2(1+t^2+ t^4 + \cdots + t^{2s(i)})\delta_{[W \oplus Y]}   \text{ , if } Y \cong W_i \text{ for some }   1 \leqslant i \leqslant a,  \\
            \delta_{[W \oplus Y]} \text{ , otherwise, } 
            \end{cases}
    \end{align*}
    where $ \varphi $ corresponds to the map $ 0 \mapsto [W \oplus Y] $, $ s(i) $ is the multiplicity of $ W_i $ in $ W $ and
    \begin{align*}
        l(W,Y) = - \mathrm{dim}\; \mathrm{Hom}_{\mathcal{C}_2(\mathcal{A})}(W,Y)+l_{(W_1, \cdots, W_a,Y)}-l_{(W_1,\cdots,W_a)}.
    \end{align*}

     Now, let $ X_1, \cdots, X_n $ be arbitrary ordered non-isomorphic indecomposable complexes corresponding to $ n $ points in $ \mathcal{O} $, $ m_1, \cdots, m_n \in \mathbb{N} $ such that $ m_1 + \cdots + m
    _n = k'+1 $, then
    \[Z = m_1 X_1 \oplus m_2 X_2 \oplus\cdots \oplus m_n X_n \]
    corresponds to a point in $ (k'+1)\mathcal{O} $. 
    For any $ 1 \leqslant j \leqslant n $, set
    \begin{align*}
        Z \backslash X_j = \oplus_{i=1}^{j-1} m_i X_i \oplus (m_j-1)X_j \oplus_{i=j+1}^{n} m_i X_i,
    \end{align*}
    which corresponds to a point in $ k' \mathcal{O} $. 
    Taking $ W= Z\backslash X_j $ and $ Y = X_j $, we have
    \begin{align*}
        &[[(V_m \times E_m^1)_{2,Z}/G_m \ltimes E_m^0] \xrightarrow{(\varphi_m)_{2,Z}} \mathcal{M}] = \sum_{j=1}^n t^{2l(Z \backslash X_j,X_j)+2}(1+t^2+\cdots +t^{2(m_j-1)}) \delta_{[Z]},
    \end{align*}
    where $ (V_m \times E_m^1)_{2,Z} $ consists of $ (v,\xi) $ such that $ Z_{\xi} \cong Z $ and $ (\varphi_m)_{2,Z} $ is the restriction map of $ (\varphi_m)_2 $.
    Thus,
    \begin{align*}
        1 \otimes [[(V_m \times E_m^1)_{2,Z}/G_m \ltimes E_m^0] \xrightarrow{(\varphi_m)_{2,Z}} \mathcal{M}] = (\sum_{j=1}^n m_j)\delta_{[Z]}  = (k'+1)[\mathrm{Spec}\;\mathbb{C} \xrightarrow{i_{[Z]}} \mathcal{M}] .
    \end{align*}
    Since the choice of $ X_1, \cdots, X_n $ is arbitrary, we have
    \begin{align*}
        1 \otimes x_2 =& (k'+1)\otimes  [\bigsqcup_{m \in M}[(V_m)_{2}/G_m]\xrightarrow{((\iota_m)_{2})} [k'\mathcal{O}\oplus\mathcal{O}/G_{(k'+1)\underline{\beta}}]\hookrightarrow \mathcal{M}] \\
        =& (k'+1) \otimes [[\mathcal{O}/G_{\underline{\beta}}]^{k'} \times [\mathcal{O}/G_{\underline{\beta}}] \xrightarrow{\iota}[k'\mathcal{O}\oplus\mathcal{O}/G_{(k'+1)\underline{\beta}}]\hookrightarrow \mathcal{M}] \\
        =& (k' + 1)1_{[(k'+1)\mathcal{O}/G_{(k'+1)\underline{\beta}}]},
    \end{align*}
    where $ (V_m)_2 $ consists of $ v \in V_m $ such that $ (v,\xi) \in (V_m \times E_m^1)_2 $ and $ (\iota_m)_2 $ corresponds to the map $ ([X],[Y]) \mapsto [X \oplus Y] $.

\end{proof}

\begin{corollary}
    \label{Of Krull-Schmidt in C2A+}
    Let $ \mathcal{O}_i \subseteq C_2^{\mathrm{ind}}(Q,\underline{\beta}_i) $ be a positive indecomposable $ G_{\underline{\beta}_i}$-invariant constructible subset for any $ 1 \leqslant i \leqslant n  $. 
    
    Assume that $ \mathcal{O}_1,\cdots,\mathcal{O}_n $ are disjoint to each other. 
    Let $ m_1, \cdots, m_n \in \mathbb{N} $ with $ m_1 + \cdots m
    _n = m $ and set $ m_1 \underline{\beta}_1 + \cdots  + m_n \underline{\beta}_n  = \underline{\beta}$. 
    In $ \mathcal{SDH}_{-1}^{\mathrm{red}}(\mathcal{A}) $, we have 
    \begin{align*}
        (1_{[\mathcal{O}_1/G_{\underline{\beta}_1}]})^{*m_1} * \cdots * (1_{[\mathcal{O}_n/G_{\underline{\beta}_n}]})^{*{m_n}} =& (m_1)! \cdots (m_n)! 1_{[m_1 \mathcal{O}_1 \oplus \cdots \oplus m_n \mathcal{O}_n/G_{\underline{\beta}}]} \\
        &+ (\text{ terms with } 1 \leqslant \gamma < m) .
    \end{align*}
    Moreover, let $ l_1, \cdots, l_n \in \mathbb{N} $ with $ l_1 + \cdots l
    _n = l $ and set $ l_1 \underline{\beta}_1 + \cdots  + l_n \underline{\beta}_n  = \underline{\alpha}$, then 
    \begin{align*}
        1_{[m_1 \mathcal{O}_1 \oplus \cdots \oplus m_n \mathcal{O}_n/G_{\underline{\beta}}]} * 1_{[l_1 \mathcal{O}_1 \oplus \cdots \oplus l_n \mathcal{O}_n/G_{\underline{\alpha}}]} =& \prod_{i=1}^n\frac{(m_i+l_i)!}{(m_i)!(l_i)!} 1_{[(m_1+l_1) \mathcal{O}_1 \oplus \cdots \oplus (m_n+l_n) \mathcal{O}_n/G_{\underline{\beta}+\underline{\alpha}}]} \\
        &+  (\text{ terms with } 1 \leqslant \gamma < m+l). 
    \end{align*}
\end{corollary}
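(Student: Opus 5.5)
The plan is to deduce both identities from Lemma \ref{O^k in C2A+}, Lemma \ref{Inequality for gamma in C2A+}, and a generalization of the ``general fact'' computed in the proof of Lemma \ref{O^k in C2A+}. Throughout we work with the filtration by $\gamma$. Lemma \ref{Inequality for gamma in C2A+} says that multiplying terms of $\gamma$-degree at most $a$ and at most $b$ produces only terms of $\gamma$-degree at most $a+b$. Its proof shows more: modulo $(t+1)$, the degree-$(a+b)$ part comes only from the stratum $(V_m\times E_m^1)_2$, where $Z_\xi\cong X\oplus Y$. So ``terms with $1\leqslant\gamma<\cdot$'' is stable under multiplication by anything, and it suffices to identify the top-degree part.

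For the first identity, apply Lemma \ref{O^k in C2A+} to each factor, so that $(1_{[\mathcal{O}_i/G_{\underline{\beta}_i}]})^{*m_i}=m_i!\,1_{[m_i\mathcal{O}_i/G_{m_i\underline{\beta}_i}]}+(\gamma<m_i)$. By the remark above, the product over $i$ equals $\prod_i m_i!$ times $1_{[m_1\mathcal{O}_1/\cdot]}*\cdots*1_{[m_n\mathcal{O}_n/\cdot]}$, modulo terms with $\gamma<m$. The first identity is therefore the special case $l_j=0$ for $j\neq i$ of the second identity, applied iteratively. Each time, the coefficient is $\binom{m_i+0}{m_i}\cdot\prod\binom{\cdot}{\cdot}=1$, because the supports are disjoint: one merges $m_1\mathcal{O}_1\oplus\cdots\oplus m_{i-1}\mathcal{O}_{i-1}$ with $m_i\mathcal{O}_i$. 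We then need the second identity for general $m,l$.

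For the second identity, fix a point $Z=\bigoplus_j s_jX_j$ of $(m_1+l_1)\mathcal{O}_1\oplus\cdots$, where the $X_j$ are pairwise non-isomorphic indecomposables with $X_j\in\mathcal{O}_{i(j)}$. We compute the top-degree contribution over $Z$ from the stratum $Z_\xi\cong W\oplus Y$. This runs over decompositions $Z\cong W\oplus Y$ with $W$ in $m_1\mathcal{O}_1\oplus\cdots$ and $Y$ in $l_1\mathcal{O}_1\oplus\cdots$. Such a decomposition amounts to choosing $w_j\leqslant s_j$ with $\sum_{i(j)=i}w_j=m_i$.

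For each choice, we generalize the ``general fact'' from a single indecomposable $Y$ to an arbitrary $Y$. As in that computation, Lemma \ref{Upsilon(Aut)} gives the coefficient
\[t^{2(\ast)}\,\frac{\Upsilon(\mathrm{Aut}(W\oplus Y))}{\Upsilon(\mathrm{Aut}W)\Upsilon(\mathrm{Aut}Y)}\]
in front of $\delta_{[Z]}$, normalized against the product measure $\prod[\mathcal{O}_i/G]^{\cdot}$ defining the $1_{[\cdot]}$. Evaluated at $t=-1$, the ratio of $\prod_k t^{k(k-1)}\prod(t^{2k}-1)/(t^2-1)^k$ factors becomes $\prod_j\binom{s_j}{w_j}$, via the $q$-binomial $\begin{bmatrix}s\\w\end{bmatrix}$ at $q=1$. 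Summing over all admissible $(w_j)$ gives $\prod_j\binom{s_j}{w_j}$ summed subject to the constraints. On the other side, the normalization of $1_{[(m_i+l_i)\mathcal{O}_i]}$ at $Z$, compared with the products defining $1_{[m\cdots]}*1_{[l\cdots]}$, introduces multinomial counts of orderings. The Vandermonde identity $\sum_{\sum w_j=m_i}\prod_j\binom{s_j}{w_j}=\binom{m_i+l_i}{m_i}$ then yields exactly $\prod_i\frac{(m_i+l_i)!}{m_i!\,l_i!}$, independent of $Z$. This reproduces the $(k'+1)$ of Lemma \ref{O^k in C2A+} when $l=1$. Since the coefficient is constant in $Z$, the scissor relation assembles the pointwise computation into the stated stack-level identity, exactly as at the end of the proof of Lemma \ref{O^k in C2A+}.

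The main obstacle is the bookkeeping of normalizations. The element $1_{[\mathcal{O}/G]}$ is defined through the product stack $\prod[\mathcal{O}_i/G]^{m_i}\to[\mathcal{O}/G]$, whose fiber over $Z$ has Euler characteristic counting ordered decompositions (essentially $\prod m_i!/\prod s_j!$-type factors). These must be matched precisely against the $q$-binomial coefficients from the automorphism groups. I would first verify the computation on the simplest case $n=1$ with $\mathcal{O}_1$ a single orbit, where everything reduces to Lemma \ref{O^k in C2A+}, and then argue that distinct $X_j$ contribute independently because the $\gamma$-top part is multiplicative over non-isomorphic summands.
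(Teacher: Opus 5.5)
Your core computation is the route the paper intends. The corollary is meant to follow from Lemma \ref{O^k in C2A+} and the ``general fact'' in its proof, extended from an indecomposable $Y$ to an arbitrary $Y$. The top $\gamma$-part comes only from the split stratum. Lemma \ref{Upsilon(Aut)} turns $\Upsilon(\mathrm{Aut}(W\oplus Y))/\Upsilon(\mathrm{Aut}\,W)\Upsilon(\mathrm{Aut}\,Y)$ into $\prod_j\binom{s_j}{w_j}$ at $t=-1$. Vandermonde then gives $\prod_i\binom{m_i+l_i}{m_i}$. Your reduction of the first identity to Lemma \ref{O^k in C2A+} plus merging disjoint supports (coefficient $1$) is also fine.

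\textbf{The gap is your normalization paragraph.} The step you call the main obstacle would fail as you describe it, because the two sources of factors you plan to combine cannot both be present.

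\emph{Reading 1: characteristic function.} The ratio $\Upsilon(\mathrm{Aut}(W\oplus Y))/\Upsilon(\mathrm{Aut}\,W)\Upsilon(\mathrm{Aut}\,Y)$ is the correct coefficient only when the two factors sit over the stack points $[\mathrm{pt}/\mathrm{Aut}\,W]$ and $[\mathrm{pt}/\mathrm{Aut}\,Y]$. That is, it assumes $1_{[\cdot]}$ is the characteristic function $[[\mathcal{O}/G]\hookrightarrow\mathcal{M}]$. In that reading the Vandermonde sum is already the whole answer, and no ordering counts enter.

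\emph{Reading 2: the product stack taken literally.} Suppose you use $\prod_i[\mathcal{O}_i/G_{\underline{\beta}_i}]^{m_i}\xrightarrow{\iota}\mathcal{M}$ as written. Then the fiber of $\iota$ over $Z$ is not weighted by the number $\prod_i m_i!/\prod_j s_j!$ of ordered decompositions. Relations (2)--(3) of Definition \ref{Scissor relation} multiply that number by $\chi(\mathrm{Aut}\,Z/\prod\mathrm{Aut}\,X_\bullet)=\prod_j s_j!$. The result is the weight $\prod_i m_i!$, uniform in $Z$. These weights cancel your binomials exactly. So in this reading every coefficient in the corollary would be $1$, and so would the $k!$ in Lemma \ref{O^k in C2A+} that you invoke.

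\emph{Example.} Take $\mathcal{O}=\{C_{S_i}\}$ with $i$ loop-free, so $\mathrm{End}=\mathbb{C}$ and $\mathrm{Ext}^1=0$. Let $T\subset B\subset\mathrm{GL}_2(\mathbb{C})$ be the diagonal torus and the Borel subgroup. Up to a power of $-t$:
\begin{itemize}
\item $1_{[\mathcal{O}]}^{*2}=[[\mathrm{pt}/B]\to\mathcal{M}]=(1+t^2)\,[[\mathrm{pt}/\mathrm{GL}_2]\hookrightarrow\mathcal{M}]$;
\item the product-stack $1_{[2\mathcal{O}]}=[[\mathrm{pt}/T]\to\mathcal{M}]=t^2(1+t^2)\,[[\mathrm{pt}/\mathrm{GL}_2]\hookrightarrow\mathcal{M}]$.
\end{itemize}
Both specialize to $2\,\overline{[[\mathrm{pt}/\mathrm{GL}_2]\hookrightarrow\mathcal{M}]}$, so the coefficient is $1$, not $2$.

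\emph{Fix.} Adopt the characteristic-function normalization throughout and delete the ordering-count step. This is how the paper's proof of Lemma \ref{O^k in C2A+} actually computes: in its last step it identifies $\sum_Z\delta_{[Z]}$ with $1_{[(k'+1)\mathcal{O}]}$. With that fix your argument is complete and coincides with the paper's.
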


Combining Lemma \ref{Inequality for gamma in C2A+}, the following lemma is analogous to \cite[Proposition 11]{DXX10}.

\begin{lemma}
     \label{Of stratified Krull-Schmidt in C2A+}
     Let $ \mathcal{O}_i \subseteq  C_2(Q,\underline{\beta}_i) $ be a positive $ G_{\underline{\beta}_i} $-invariant constructible subset for $ i=1,2 $. 
     By Proposition \ref{Motivic Riedtmann-Peng}, in $ \mathcal{SDH}_{-1}^{\mathrm{red}}(\mathcal{A}) $, write
     \begin{align*}
        1_{[\mathcal{O}_1/G_{\underline{\beta}_1}]} * 1_{[\mathcal{O}_2/G_{\underline{\beta}_2}]} = 1 \otimes \sum_{m \in M} [[V_m \times E_m^1/G_m \ltimes E_m^0] \xrightarrow{\varphi_m} \mathcal{M}]. 
    \end{align*}
     Refining the stratification if necessary, we have
     \begin{align*}
         \gamma([V_m \times E_m^1/G_m \ltimes E_m^0]) \leqslant \gamma([\mathcal{O}_1/G_{\underline{\beta}_1}]) + \gamma([\mathcal{O}_2/G_{\underline{\beta}_2}]),
     \end{align*}
     and
     $ \mathrm{Im}\;\varphi_m $ is of Krull-Schmidt type for any $ m \in M $.
    
\end{lemma}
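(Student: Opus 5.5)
The inequality on $\gamma$ is already Lemma \ref{Inequality for gamma in C2A+}. The plan is to keep that part unchanged and only refine the stratification so that each image $\mathrm{Im}\;\varphi_m$ is of Krull-Schmidt type, following \cite[Proposition 11]{DXX10}. Working in $\mathcal{SDH}_{-1}^{\mathrm{red}}(\mathcal{A})$, we first use the decomposition $V_m \times E_m^1 = \bigsqcup_{i=1}^3 (V_m\times E_m^1)_i$ from the proof of Lemma \ref{Inequality for gamma in C2A+}. We discard the piece $i=3$, since $1\otimes x_3=0$ there. On the remaining pieces every $(v,\xi)$ satisfies $\gamma([Z_\xi]) \leqslant \gamma([X])+\gamma([Y])$. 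Any further refinement of a stratification into $G_m\ltimes E_m^0$-invariant constructible pieces keeps this bound, because $\gamma$ of a piece is a maximum over a subset.

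For the Krull-Schmidt property, fix $m$ and let $\underline{\alpha}=\underline{\beta}_1+\underline{\beta}_2$. The image $\varphi_m$ lands in $[C_2(Q,\underline{\alpha})/G_{\underline{\alpha}}]$, and by positivity all middle terms $Z_\xi$ are of the form $C_M$. The image of a constructible set under the algebraic map $(v,\xi)\mapsto Z_\xi$ lifts to a $G_{\underline{\alpha}}$-invariant constructible subset $\mathcal{Z}_m\subseteq C_2(Q,\underline{\alpha})$. We stratify $\mathcal{Z}_m$ by the \emph{decomposition type}: the tuple of dimension vector pairs and multiplicities of its indecomposable summands. There are finitely many types, since dimension vectors are bounded by $\underline{\alpha}$.

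Fix a type $\tau$, say $m_1$ summands of dimension $\underline{\alpha}_1$, \dots, $m_n$ summands of dimension $\underline{\alpha}_n$. Consider the map
\[
\psi_\tau : G_{\underline{\alpha}} \times \prod_j \big(C_2^{\mathrm{ind}}(Q,\underline{\alpha}_j)\big)^{m_j} \to C_2(Q,\underline{\alpha})
\]
given by iterating (\ref{psiaa'a''}). The locus of type $\tau$ equals $\mathrm{Im}\,\psi_\tau$ minus the images of types with more summands, so it is constructible. For each $j$ we set $\mathcal{O}_j' \subseteq C_2^{\mathrm{ind}}(Q,\underline{\alpha}_j)$ to be the set of indecomposables occurring as summands of points of $\mathcal{Z}_m$ of type $\tau$. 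By Krull-Schmidt this is the projection of $\psi_\tau^{-1}(\mathcal{Z}_m)$ to the relevant factor, hence constructible, and it is invariant and positive.

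The difficulty is that $\mathcal{Z}_m^\tau$ need not equal $m_1\mathcal{O}_1'\oplus\cdots\oplus m_n\mathcal{O}_n'$: not every combination of summands need occur. This is the main obstacle. The plan for it, imitating \cite{DXX10}, is to pass to the constructible set
\[
P_\tau=\psi_\tau^{-1}(\mathcal{Z}_m)\subseteq G_{\underline{\alpha}} \times \prod_j (C_2^{\mathrm{ind}})^{m_j}
\]
and stratify its image $\overline{P}_\tau$ in $\prod_j (C_2^{\mathrm{ind}})^{m_j}$. We aim for a finite decomposition of $\overline{P}_\tau$ into pieces that are products $\prod_j\prod_{k}\mathcal{O}_{j,k}$ of invariant constructible sets, up to the symmetric group permuting equal-type factors. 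Such a decomposition exists by a Noetherian induction on constructible subsets of a product: a constructible subset of $A\times B$ is a finite union of products of constructible sets, after refining via generic fibers, and then we symmetrize. Each product piece maps onto a set of Krull-Schmidt type $\bigoplus_{j,k}\mathcal{O}_{j,k}$. We then disjointify these images using Remark \ref{disjoint union}.

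Finally, we pull the resulting partition of $\mathcal{Z}_m$ back along $\varphi_m$ to refine $V_m\times E_m^1$ into invariant constructible pieces. We re-apply Proposition \ref{Motivic Riedtmann-Peng}, so that the dimensions of $E_m^0$ and $E_m^1$ remain constant on each piece, and we use the scissor relation in Definition \ref{Scissor relation} (1). This yields a new finite index set $M$ in which each $\mathrm{Im}\;\varphi_m$ is of Krull-Schmidt type and the $\gamma$-bound still holds.
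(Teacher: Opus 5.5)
Your treatment of the $\gamma$-bound is fine and matches the paper, but the Krull-Schmidt part has a genuine gap. To get past the ``main obstacle'' you assert that a constructible subset of a product is, after refinement, a finite union of products of constructible sets. That is false. The diagonal of $\mathbb{A}^1\times\mathbb{A}^1$ contains no product $S\times T$ of nonempty sets other than a single point, so it is not such a finite union.

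Invariance does not rescue the claim. At a vertex with two loops, $x=\begin{pmatrix}0&1\\0&0\end{pmatrix}$, $y=\lambda x$ gives a $\mathbb{P}^1$-family of pairwise non-isomorphic indecomposables $M_\lambda$. The invariant constructible set of all $C_{M_\lambda}\oplus C_{M_\lambda}$ is not a finite union of sets of Krull-Schmidt type: any such set inside it contains $C_M\oplus C_{M'}$ for all choices of its summands, so it is a single orbit.

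Apart from discarding $x_3$, your argument uses nothing about extensions or about working modulo $t+1$. It would therefore prove that every invariant constructible set of positive complexes stratifies into Krull-Schmidt pieces, which this example refutes. Such correlated sets genuinely arise as images here. In $1_{[\mathcal{O}/G_{\underline{\beta}}]}*1_{[\mathcal{O}/G_{\underline{\beta}}]}$ for this family, $\dim\mathrm{Hom}_{\mathcal{A}}(M_\lambda,M_\mu)$ jumps from $1$ to $2$ on the diagonal. So the strata $V_m$ must separate the diagonal, and the split part over such a stratum has exactly this correlated image. Pulling back a partition of $\mathcal{Z}_m$ stratum by stratum therefore cannot work. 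One has to regroup contributions, as the paper does for $x_2$ by summing over $m$ and using Corollary \ref{Of Krull-Schmidt in C2A+}.

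The missing ingredient is the torus argument of \cite{Riedtmann94} and \cite[Proposition 11]{DXX10}, which the paper uses. Write a middle term as $Z=Z_1\oplus\cdots\oplus Z_s$ and let $\mathbb{C}^*$ act with distinct weights on the summands. This induces an action on the filtrations $(\mathrm{Im}\,a_\xi\subseteq Z)$ parametrized by $\mathrm{Ext}^1_{\mathcal{C}_2(\mathcal{A})}(X,Y)_Z$. The non-fixed locus contributes a multiple of $t^2-1$, so it vanishes in $\mathcal{SDH}_{-1}^{\mathrm{red}}(\mathcal{A})$. A surviving filtration $Z_0\subseteq Z$ satisfies $Z_0=\bigoplus_j (Z_0\cap Z_j)$, so every indecomposable summand $Z_j$ is an extension of a summand of $X$ by a summand of $Y$. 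Now write $\mathcal{O}_1,\mathcal{O}_2$ through common, pairwise disjoint indecomposable pieces $\mathcal{P}_i$ (Remark \ref{disjoint union}). This ``extension of summands'' structure is what makes the surviving image, stratified by $\gamma([Z_\xi])$, of Krull-Schmidt type. Without using the specialization at $t=-1$ in this way, the conclusion cannot be reached from constructibility alone.
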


\begin{proof}
    By Lemma \ref{Inequality for gamma in C2A+}, write $  V_m \times E_m^1 = \bigsqcup_{i=1}^2 (V_m \times E_m^1)_i $ and we have $ 1_{[\mathcal{O}_1/G_{\underline{\beta}_1}]} * 1_{[\mathcal{O}_2/G_{\underline{\beta}_2}]} = 1 \otimes{\sum_{i=1}^2} x_i $.

    Fix a complex $ Z $ such that
    \begin{align*}
        (t^2-1) \nmid [[(V_m \times E_m^1)_{2,Z}/G_m \ltimes E_m^0] \xrightarrow{(\varphi_m)_{2,Z}}\mathcal{M}]
    \end{align*}
    for some $ m \in M $, where $ (V_m \times E_m^1)_{2,Z} $ consists of $ (v,\xi) $ such that $ Z_{\xi} \cong Z $ and $ (\varphi_m)_{2,Z} $  is the restriction map of $ (\varphi_m)_{2} $. 
    Then there exists some $ v \in V_m $ corresponding to two complexes $ X, Y $ such that
    \begin{align*}
        (t^2-1) \nmid [[\mathrm{Ext}^1_{\mathcal{C}_2(\mathcal{A})}(X,Y)_{Z}/(\mathrm{Aut}_{\mathcal{C}_2(\mathcal{A})}(X) \times \mathrm{Aut}_{\mathcal{C}_2(\mathcal{A})}(Y)) \ltimes \mathrm{Hom}_{\mathcal{C}_2(\mathcal{A})}(X,Y)] \xrightarrow{\varphi} \mathcal{M}],
    \end{align*} 
    where $ \varphi $ corresponds to the map $ \xi \mapsto [Z] $. 
    By a similar argument as in \cite[Lemma 2.2]{Riedtmann94} and \cite[Proposition 11]{DXX10}, we will show that all indecomposable direct summands of $ Z $ are extensions of direct summands of $ X $ and $ Y $. 
    
    Suppose $ Z = Z_1 \oplus \cdots \oplus Z_s $ for some indecomposable complexes $ Z_1, \cdots, Z_s $. 
    There is an $ \mathbb{C}^* $-action on $ Z $ given by
    \begin{align*}
        t \cdot (z_1, \cdots, z_s) = (tz_1, \cdots,t^sz_s),
    \end{align*}
    where $ t \in \mathbb{C}^* $, $ z_j \in Z_j$ for any $ 1 \leqslant j \leqslant s $. 
    A subspace $ Z_0 \subseteq Z $ is stable under this action if and only if $ Z_0 = (Z_1 \cap Z_0) \oplus \cdots \oplus (Z_s \cap Z_0) $. 
    Note that by Proposition \ref{Motivic Riedtmann-Peng}, the $ \mathbb{C} $-points of the quotient stack
    \[[\mathrm{Ext}^1_{\mathcal{C}_2(\mathcal{A})}(X,Y)_{Z}/(\mathrm{Aut}_{\mathcal{C}_2(\mathcal{A})}(X) \times \mathrm{Aut}_{\mathcal{C}_2(\mathcal{A})}(Y)) \ltimes \mathrm{Hom}_{\mathcal{C}_2(\mathcal{A})}(X,Y)]\]
    are isomorphism classes of filtrations $ (\mathrm{Im}\;a_{\xi} \subseteq Z_{\xi}) $ for any $ \xi \in \mathrm{Ext}^1_{\mathcal{C}_2(\mathcal{A})}(X,Y)_{Z} $. 
    There is an induced $ \mathbb{C}^* $-action on the set of such filtrations whose fixed points are of the forms $  (Z_0 \subseteq Z) $
    such that
    \begin{align*}
        Z_0 &= (Z_0 \cap Z_1) \oplus \cdots \oplus  (Z_0 \cap Z_s) \cong Y, \\
        Z/Z_0 &= Z_1/(Z_0 \cap Z_1) \oplus \cdots \oplus Z_s/(Z_0 \cap Z_s) \cong X.
    \end{align*}

    Divide $ \mathrm{Ext}^1_{\mathcal{C}_2(\mathcal{A})}(X,Y)_{Z} $ into the disjoint union of two $ (\mathrm{Aut}_{\mathcal{C}_2(\mathcal{A})}(X) \times \mathrm{Aut}_{\mathcal{C}_2(\mathcal{A})}(Y)) \ltimes \mathrm{Hom}_{\mathcal{C}_2(\mathcal{A})}(X,Y) $-invariant constructible subsets
    \begin{align*}
         \mathrm{Ext}^1_{\mathcal{C}_2(\mathcal{A})}(X,Y)_{Z} = E^1_{0,Z} \bigsqcup E^1_{1,Z},
    \end{align*}
    where $ E^1_{0,Z} $ consists of $ \xi $ such that $ (\mathrm{Im}\;a_{\xi} \subseteq Z_{\xi}) $ is fixed under the $ \mathbb{C}^* $-action and $ E^1_{1,Z} $ is its complement, denote by $ (\varphi_{Z})_i  $ the restriction map of $ \varphi_{Z} $. 
    Note that by construction $ \mathbb{C}^* $ acts freely on the second term, we have
    \begin{align*}
        (t^2-1) \mid [[E^1_{1,Z}/(\mathrm{Aut}_{\mathcal{C}_2(\mathcal{A})}(X) \times \mathrm{Aut}_{\mathcal{C}_2(\mathcal{A})}(Y)) \ltimes \mathrm{Hom}_{\mathcal{C}_2(\mathcal{A})}(X,Y)] \xrightarrow{(\varphi_{Z})_1 } \mathcal{M}],
    \end{align*}
    and thus
    \begin{align*}
        (t^2-1) \nmid [[E^1_{0,Z}/(\mathrm{Aut}_{\mathcal{C}_2(\mathcal{A})}(X) \times \mathrm{Aut}_{\mathcal{C}_2(\mathcal{A})}(Y)) \ltimes \mathrm{Hom}_{\mathcal{C}_2(\mathcal{A})}(X,Y)] \xrightarrow{(\varphi_{Z})_0 } \mathcal{M}],
    \end{align*}
    which implies that the fixed point set is non-empty. 
   
    Hence, for any $ 1 \leqslant j \leqslant s $, we have a short exact sequence
    \begin{align*}
         0 \to Z_0 \cap Z_j \to Z_j \to Z_j/(Z_0 \cap Z_j) \to 0,
    \end{align*}
    where $ Z_0 \cap Z_j $ is a direct summand of $ Y $ and $ Z_j/(Z_0 \cap Z_j) $ is a direct summand of $ X $. 

    By Remark \ref{disjoint union}, without loss of generality, we may assume that
    \begin{align*}
        \mathcal{O}_1 = m_1\mathcal{P}_1 \oplus \cdots \oplus m_k \mathcal{P}_k,\; \mathcal{O}_2 = n_1\mathcal{P}_1 \oplus \cdots \oplus n_k \mathcal{P}_k,
    \end{align*}
    where
    \begin{align*}
         k= \gamma([\mathcal{O}_1/G_{\underline{\beta}_1}]) + \gamma([\mathcal{O}_2/G_{\underline{\beta}_2}]), 
    \end{align*}
    $ \mathcal{P}_i \subseteq C^{\mathrm{ind}}_2(Q,\underline{\alpha}_i) $ is a positive indecomposable $ G_{\underline{\alpha}_i} $-invariant   constructible subset for any $ 1 \leqslant i \leqslant k $, $ \mathcal{P}_i \cap \mathcal{P}_j = \emptyset $ or $ \mathcal{P}_i = \mathcal{P}_j $ for any $ i \neq j $, and $ m_i, n_i \in \{0,1\} $ for any $ 1 \leqslant i \leqslant k $. 

    Note that by Corollary \ref{Of Krull-Schmidt in C2A+}, 
    \begin{align*}
        1 \otimes x_2 =1 \otimes \sum_{m \in M}[[(V_m \times E_m^1)_2/G_m \ltimes E_m^0] \xrightarrow{(\varphi_m)_2} \mathcal{M}] = z  1_{[(m_1+n_1) \mathcal{P}_1 \oplus \cdots \oplus (m_k+n_k) \mathcal{P}_k/G_{\underline{\beta}_1+\underline{\beta}_2}]}
    \end{align*}
    for some $ z \in \mathbb{C} $, so $  \mathrm{Im}\;(\varphi_m)_2 $ is of Krull-Schmidt type.

    For the remaining case, divide $ (V_m \times E_m^1)_1 $ into a disjoint union of $ G_m \ltimes E_m^0 $-invariant constructible subsets
    \begin{align*}
        (V_m \times E_m^1)_1 = \bigsqcup_{s=1}^{k-1} (V_m \times E_m^1)_{1,s},
    \end{align*}
    where $ (V_m \times E_m^1)_{1,s} $ consists of $ (v,\xi) $ such that $ \gamma([Z_{\xi}]) = s $, denote by $ (\varphi_m)_{1,s} $ the restriction map of $ (\varphi_m)_{1} $, then
    \begin{align*}
      1 \otimes x_1 =  1 \otimes \sum_{m \in M} \sum_{s=1}^{k-1}[[(V_m \times E_m^1)_{1,s }/G_m \ltimes E_m^0] \xrightarrow{(\varphi_m)_{1,s}}\mathcal{M}].
    \end{align*}
    For any $ 1 \leqslant s \leqslant k-1 $, according to the above analysis of $ \mathbb{C}^* $-action and using a similar argument as in the proof of \cite[Proposition 11]{DXX10}, $ \mathrm{Im}\;(\varphi_m)_{1,s} $ is of Krull-Schmidt type.

\end{proof}

\begin{proof}[\emph{\textbf{Proof of Proposition \ref{ind=red(SDH)}}}]
    It suffices to show that for any $\underline{\beta} \in \mathbb{N}\mathbb{I} \times \mathbb{N}\mathbb{I} $ and any positive $ G_{\underline{\beta}} $-invariant constructible subset $ \mathcal{O} \subseteq C_2(Q, \underline{\beta}) $ of the form  $ \mathcal{O} = m_1 \mathcal{O}_1 \oplus \cdots \oplus m_n \mathcal{O}_n $ with $ \mathcal{O}_i \subseteq C_2^{\mathrm{ind}}(Q,\underline{\beta}_i) $ positive indecomposable and $ \mathcal{O}_i \cap \mathcal{O}_j = \emptyset $ for any $ i \neq j $, we have $ 1_{[\mathcal{O}/G_{\underline{\beta}}]} \in \mathcal{SDH}^{\mathrm{ind}}_{-1}(\mathcal{A}) $. 

    Prove by induction on $ \gamma([\mathcal{O}/G_{\underline{\beta}}]) = m_1 + \cdots + m_n = m $. 
    If $ m =1 $, clearly. 
    Suppose that the statement holds for positive constructible subsets with $ \gamma \leqslant m-1 $. 
    Write $ \mathcal{O}' = (m_1-1)\mathcal{O}_1 \oplus \cdots \oplus m_n\mathcal{O}_n $ and set $ \underline{\beta}' = (m_1-1)\underline{\beta}_1 + \cdots + m_n \underline{\beta}_n $, then $ 1_{[\mathcal{O}'/G_{\underline{\beta}'}]} \in \mathcal{SDH}^{\mathrm{ind}}_{-1}(\mathcal{A}) $ by the induction hypothesis. 
    Consider $ 1_{[\mathcal{O}'/G_{\underline{\beta}'}]} * 1_{[\mathcal{O}_1/G_{\underline{\beta}_1}]} $ in $  \mathcal{SDH}^{\mathrm{ind}}_{-1}(\mathcal{A}) $. 
    
    By Lemma \ref{Inequality for gamma in C2A+} and the fact in the proof of Lemma \ref{O^k in C2A+}, we have
    \begin{align*}
         1_{[\mathcal{O}/G_{\underline{\beta}}]}= \frac{1}{m_1}{1_{[\mathcal{O}'/G_{\underline{\beta}'}]}} *  1_{[\mathcal{O}_1/G_{\underline{\beta}_1}]} - (\text{ terms with $ 1 \leqslant \gamma <m $ } ).
    \end{align*}
    Also, Lemma \ref{Of stratified Krull-Schmidt in C2A+} enables us to apply the induction hypothesis to the lower terms and thus $ 1_{[\mathcal{O}/G_{\underline{\beta}}]} \in \mathcal{SDH}^{\mathrm{ind}}_{-1}(\mathcal{A}) $.

\end{proof}

Combining Theorem \ref{SPhi to SU-1} and Proposition \ref{ind=red(SDH)}, we obtain the following. 
\begin{corollary}
    \label{SPhi to SDHind-1}
    There is an injective homomorphism of $ \mathbb{C} $-algebras
    \[ \mathcal{S}\Phi: U(\mathcal{G}_Q) \hookrightarrow \mathcal{SDH}^{\mathrm{ind}}_{-1}(\mathcal{A}) \]
    given by
    \begin{align*}
        e_{il} \mapsto \overline{\mathfrak{s}_{il}},\; f_{il} \mapsto \overline{\mathfrak{T}_{il}},\; h_i \mapsto h_{\widehat{S_i}} \text{ for } (i,l) \in \mathbb{I}^{\infty}.
    \end{align*}
\end{corollary}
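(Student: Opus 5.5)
The plan is to compose the isomorphism of Theorem \ref{SPhi to SU-1} with the inclusion of $\mathcal{SU}_{-1}(\mathcal{A})$ into $\mathcal{SDH}_{-1}^{\mathrm{red}}(\mathcal{A})$, and then to identify the latter with $\mathcal{SDH}_{-1}^{\mathrm{ind}}(\mathcal{A})$ via Proposition \ref{ind=red(SDH)}.

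\textbf{Step 1: the inclusion on integral forms.} First check that $\mathcal{SU}_t(\mathcal{A})_{\mathbb{C}_{-1}} \subseteq \mathcal{SDH}_t^{\mathrm{red}}(\mathcal{A})_{\mathbb{C}_{-1}}$. The generators $b_\alpha$ and $\frac{b_\alpha-1}{-t-1}$ are common to both algebras. Each $\mathfrak{s}_{il}$ is a $\mathbb{C}_{-1}$-combination of elements supported on positive constructible sets, and each $\mathfrak{T}_{il}$ is $\frac{1}{\tau_{il}(t^2-1)}$ times an element supported on negative sets. By the normalization fixed before Definition \ref{Definition of SU-1}, both therefore lie among the generators of Definition \ref{Classical limit of SDH}.

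\textbf{Step 2: injectivity after specialization.} Tensoring the inclusion with $\mathbb{C}=\mathbb{C}_{-1}/(t+1)$ gives an algebra map $\mathcal{SU}_{-1}(\mathcal{A})\to \mathcal{SDH}_{-1}^{\mathrm{red}}(\mathcal{A})$, but this map need not be injective: $\mathrm{Tor}$ could create a kernel. This is the main obstacle. The criterion to verify is
\[
(t+1)\mathcal{SDH}_t^{\mathrm{red}}(\mathcal{A})_{\mathbb{C}_{-1}} \cap \mathcal{SU}_t(\mathcal{A})_{\mathbb{C}_{-1}} = (t+1)\mathcal{SU}_t(\mathcal{A})_{\mathbb{C}_{-1}}.
\]

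\textbf{Step 3: verifying the saturation criterion.} I would attempt this through the triangular decomposition.
\begin{enumerate}
\item By Proposition \ref{Triangular decomposition of SDH_t} and the triangular decomposition of $U_{-t}(\mathcal{G}_Q)_{\mathbb{C}_{-1}}$ from \cite{FKKB21}, reduce the saturation statement to the positive and negative halves separately, together with the Cartan part.
\item The Cartan part is a Laurent polynomial algebra, where saturation is clear.
\item For the positive half, write elements of $\mathcal{SU}$ in a PBW-type basis from \cite[Theorem 4.3]{FKKB21}. Show that their images at $t=-1$ remain linearly independent in $\mathcal{SDH}_{-1}^{\mathrm{red}}(\mathcal{A})$ by comparing leading terms with respect to the grading $\gamma$, using Corollary \ref{Of Krull-Schmidt in C2A+}.
\end{enumerate}
Concretely, linear independence of the specialized basis in the larger algebra is exactly what gives saturation. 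The classical limit of the positive half is $U(\mathfrak{n}^+)$, which embeds into the Euler-characteristic Hall algebra in the spirit of \cite{DXX10} and Riedtmann. That embedding should be checkable by evaluating constructible functions.

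\textbf{Step 4: conclusion.} Once Step 3 is in place, composing the isomorphism $\mathcal{S}\Phi$ of Theorem \ref{SPhi to SU-1} with the now-injective map into $\mathcal{SDH}_{-1}^{\mathrm{red}}(\mathcal{A})$ and with the isomorphism of Proposition \ref{ind=red(SDH)} gives the injective homomorphism $U(\mathcal{G}_Q)\hookrightarrow \mathcal{SDH}_{-1}^{\mathrm{ind}}(\mathcal{A})$. The images of the generators are as stated, since all three maps act as the identity on the symbols $\overline{\mathfrak{s}_{il}}$, $\overline{\mathfrak{T}_{il}}$ and $h_{\widehat{S_i}}$.
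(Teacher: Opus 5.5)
Your skeleton is the paper's. Its proof of this corollary is the single sentence ``combining Theorem~\ref{SPhi to SU-1} and Proposition~\ref{ind=red(SDH)}'', i.e.\ the composite $U(\mathcal{G}_Q)\xrightarrow{\sim}\mathcal{SU}_{-1}(\mathcal{A})\to\mathcal{SDH}^{\mathrm{red}}_{-1}(\mathcal{A})\cong\mathcal{SDH}^{\mathrm{ind}}_{-1}(\mathcal{A})$, which is your Steps~1 and~4. The paper never discusses your Step~2. It simply regards $\mathcal{SU}_{-1}(\mathcal{A})$ as sitting inside $\mathcal{SDH}^{\mathrm{red}}_{-1}(\mathcal{A})$ (see the opening of Section~\ref{Enveloping algebra arising from C2A}), even though Definition~\ref{Definition of SU-1} makes it an abstract specialization. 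You are right that injectivity of the middle arrow is then equivalent to your saturation condition. You are also right that Theorem~\ref{SPhi to SU-1} does not provide it, since that theorem is a statement about the abstract specialization. Proposition~\ref{ind=red(SDH)} hides the same issue: its proof only shows that the classes $1_{[\mathcal{O}/G_{\underline{\beta}}]}$ lie in the image of $\mathcal{SDH}^{\mathrm{ind}}_{-1}(\mathcal{A})$. Your Step~4 takes that proposition at face value, as the paper does.

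The gap is Step~3, which is meant to supply the missing injectivity. It is a programme rather than an argument, and none of its ingredients is available.

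(i) Proposition~\ref{Triangular decomposition of SDH_t} holds over $\mathbb{C}(t)$ only. Reducing saturation to the three factors needs a triangular decomposition of the $\mathbb{C}_{-1}$-form $\mathcal{SDH}^{\mathrm{red}}_t(\mathcal{A})_{\mathbb{C}_{-1}}$ into integral pieces. That is a genuine claim, because the straightening coefficients (e.g.\ $1/\varphi_r(t^{-2})$ in Lemma~\ref{eikfil}) have poles at $t=-1$ that would have to cancel.

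(ii) A $\gamma$-leading-term comparison presupposes that the classes $1_{[\mathcal{O}/G_{\underline{\beta}}]}$ of distinct Krull--Schmidt strata are linearly independent, in particular nonzero, in $\mathcal{SDH}^{\mathrm{red}}_{-1}(\mathcal{A})$. Corollary~\ref{Of Krull-Schmidt in C2A+} only computes products modulo lower $\gamma$. It says nothing about which elements lie in $(t+1)\mathcal{SDH}^{\mathrm{red}}_t(\mathcal{A})_{\mathbb{C}_{-1}}$.

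(iii) Even granting (i) and (ii), leading terms only see the PBW/symmetrization structure. At best they reduce the positive half to the Lie-theoretic statement that the $\overline{\mathfrak{s}_{il}}$ generate a copy of $\mathcal{G}_Q^+$, i.e.\ satisfy no relations beyond (\ref{Relation 4}) and the $e$-part of (\ref{Relation 6}). That is the $t=-1$ analogue of the injectivity in Theorem~\ref{Realization of quantum BB algebra}, which the paper obtains only over $\mathbb{C}(t)$ via \cite{Kang18}. It is not in \cite{DXX10}, which describes the whole Euler-characteristic Hall algebra as an enveloping algebra of indecomposables, not the subalgebra generated by the $e_{il}$. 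Nor is it in \cite{Riedtmann94}, which treats representation-finite algebras. Moreover, ``evaluating constructible functions'' would require a $\mathbb{C}_{-1}$-linear evaluation map on $\mathcal{SDH}^{\mathrm{red}}_t(\mathcal{A})_{\mathbb{C}_{-1}}$ that vanishes on $(t+1)$-multiples, and neither you nor the paper constructs one.

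So compared with the paper you lose nothing, and you have correctly located its weak point. But the injectivity you identify as necessary remains unproved.
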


\newpage

\bibliographystyle{amsplain}
\bibliography{REA}

\end{document}